\theoremstyle{plain}                    
\theoremstyle{plain}
\newtheorem{theorem}{Theorem}[section]
\newtheorem{lemma}[theorem]{Lemma}
\newtheorem{proposition}[theorem]{Proposition}
\theoremstyle{definition}
\newtheorem{example}[theorem]{Example}
\newtheorem{definition}[theorem]{Definition}
\theoremstyle{remark}
\newtheorem{remark}[theorem]{Remark}
\DeclareMathOperator{\dist}{dist}
\DeclareMathOperator{\osc}{osc}
\DeclareMathOperator{\diam}{diam}
\begin{document}

\title[Infinity Laplace equation with Neumann boundary conditions]{Tug-of-war and infinity Laplace equation with vanishing Neumann boundary condition}
\author[T. Antunovi\'{c}]{Ton\'{c}i Antunovi\'{c}}
\address{Ton\'{c}i Antunovi\'{c}, Department of Mathematics,
University of California, Berkeley, CA 94720}
\email{tantun@math.berkeley.edu}

\author[Y. Peres]{Yuval Peres}
\address{Yuval Peres,
Microsoft Research,
Theory Group,
Redmond, WA 98052}
\email{peres@microsoft.com}

\author[S. Sheffield]{Scott Sheffield}
\address{Scott Sheffield, Department of Mathematics,
Massachusetts Institute of Technology,
Cambridge, MA 02139}
\email{sheffield@math.mit.edu}

\author[S. Somersille]{Stephanie Somersille}
\address{Stephanie Somersille, Department of Mathematics, University of Texas, Austin, TX 78712}
\email{steph@math.utexas.edu}

\subjclass[2010]{Primary 35J70, 91A15, 91A24}
\keywords{Infinity Laplace equation, tug-of-war}

\maketitle

\begin{abstract}
We study a version of the stochastic ``tug-of-war'' game, played on graphs and smooth domains, with the empty set of terminal states. We prove that, when the running payoff function is shifted by an appropriate constant, the values of the game after $n$ steps converge in the continuous case and the case of finite graphs with loops. Using this we prove the existence of solutions to the infinity Laplace equation with vanishing Neumann boundary condition.
\end{abstract}

\section{Introduction}

\subsection{Background and previous results}
For a (possibly infinite) graph $G=(V,E)$, the stochastic tug-of-war game, as introduced in \cite{PSSW}, is a two-player zero-sum game defined as follows. At the beginning there is a token located at a vertex $x \in V$. At each step of the game players toss a fair coin and the winning player gets to move the token to an arbitrary neighbor of $x$. At the same time Player II pays Player I the value $f(x)$, where $f \colon V \to \mathbb{R}$ is a given function on the set of vertices, called the \textit{running payoff}. The game stops when the token reaches any vertex in a given set $W \subset V$, called the \textit{terminal set}. If $y \in W$ is the final position of the token, then Player II pays Player I a value of $g(y)$ for a given function $g \colon W \to \mathbb{R}$ called the \textit{terminal payoff}.
One can show that, when $g$ is bounded and either $f=0$, $\inf f>0$ or $\sup f<0$, this game has a value (Theorem 1.2 in \cite{PSSW}), which corresponds to the expected total amount that Player II pays to Player I when both players ``play optimally''.

Among other reasons, these games are interesting because of a connection between the game values and viscosity solutions of the infinity Laplace equation. Let $\Omega \subset \mathbb{R}^d$ be a domain (open, bounded and connected set) with $C^1$ boundary $\partial \Omega$, and let $f \colon \Omega \to \mathbb{R}$ and $g \colon \partial \Omega \to \mathbb{R}$ be continuous functions. Define the graph with the vertex set $\overline{\Omega}$ so that two points $x,y \in \overline{\Omega}$ are connected by an edge if and only if the intrinsic path distance between $x$ and $y$ in $\overline{\Omega}$ is less than $\epsilon$. Playing the game on this graph corresponds to moving the token from a position $x \in \Omega$ to anywhere inside the ball with the center in $x$ and radius $\epsilon$, defined with respect to the intrinsic path metric in $\overline{\Omega}$.
Consider this game with the running payoff $\epsilon^2f$, the terminal set $\partial \Omega$ and the terminal payoff $g$. By Dynamic programming principle if the value of this game exists then it is a solution to the finite difference equation 
\begin{equation*}
u(x) - \frac{1}{2}\left(\min_{B(x,\epsilon)}u + \max_{B(x,\epsilon)}u\right) = f(x),
\end{equation*}
for all $x \in \Omega$, and $u(y)=g(y)$,  for all $y \in \partial \Omega$.
In \cite{PSSW} it was shown that, under certain assumptions on the payoff function $f$, the game values  with step size $\epsilon$ converge as $\epsilon$ converges to zero appropriately. Moreover the limit $u$ is shown to be a viscosity solution to the non-linear partial differential equation 
\begin{equation*}
\left\{\begin{aligned}
& -\Delta_\infty u = f & \text{in} & \ \Omega,\\
& u = g & \text{on} & \ \partial \Omega.
\end{aligned}\right.
\end{equation*}
Using finite difference approach and avoiding probabilistic arguments, Armstrong and Smart  \cite{AS} obtained general existence results for this equation and the uniqueness for typical shifts of the function $f$. Several modifications of this game have also been studied, including using biased coins, which corresponds to adding a gradient term to the equation (see \cite{PPS}) and taking the terminal set $W$ to be a non-empty subset of $\partial \Omega$, which corresponds to Dirichlet boundary conditions on $W$ and vanishing Neumann boundary conditions on $\partial \Omega \setminus W$ (see \cite{ASS}, \cite{CGR_1} and \cite{CGR_2}). 

A crucial property of these games is the fact that the terminal set is non-empty which ensures that the game can stop in finite time. However to use the above connection in order to study the infinity Laplace equation with pure vanishing Neumann boundary conditions, one would have to consider this game without the terminal set. This is the content of this paper.

In the following two subsections we introduce the notation and give necessary definitions. In Section \ref{section: the discrete case} we study the tug-of-war games of finite horizons (defined below) on finite graphs. The results we obtain are used in Section \ref{section:the continuous case} to prove the existence of solutions of infinity Laplace equations with pure vanishing Neumann boundary conditions. Section \ref{section: uniqueness discussion} contains a discussion about uniqueness.

\subsection{Setting and notation}

All graphs that we consider in the text will be connected and of finite diameter (in graph metric), but we allow graphs to have an uncountable number of vertices (and vertices with uncountable degrees). 
However, for most of the main results we will need additional assumptions. 

\begin{definition}\label{def:length_graphs}
Let $(V,d)$ be a compact length space, that is, a compact metric space such that for all $x,y \in V$ the distance $d(x,y)$ is the infimum of the lengths of rectifiable paths between $x$ and $y$. For a fixed $\epsilon > 0$, the $\epsilon$-\textit{adjacency graph} is defined as a graph with the vertex set $V$, such that two vertices $x$ and $y$ are connected if and only if $d(x,y) \leq \epsilon$. When the value of $\epsilon$ is not important we will simply use the term adjacency graph.
\end{definition}

A particular example of $\epsilon$-adjacency graphs, already described in the previous subsection, corresponds to taking $(V,d)$ to be a closure of a domain $\Omega \subset \mathbb{R}^d$ with $C^1$ boundary $\partial \Omega$ and the intrinsic metric in $\overline{\Omega}$. This means that $d(x,y)$ is equal to the infimum of the lengths of rectifiable paths contained in $\overline{\Omega}$, between points  $x$ and $y$. We will call these graphs \textit{Euclidean} $\epsilon$-\textit{adjacency graphs}. Note that in the present work we will limit our attention to domains with $C^1$ boundary (meaning that for any $x \in \partial \Omega$ we can find open sets $U\subset \mathbb{R}^{n-1}$ and $V\subset \mathbb{R}^n$ containing $0$ and $x$ respectively, a $C^1$ function $\phi \colon U \to \mathbb{R}^{n-1}$ and an isometry of $\mathbb{R}^n$ which maps $(0,\phi(0))$ to $x$ and the graph of $\phi$ onto $V \cap \partial \Omega$).

While the fact that $(\overline{\Omega},d)$ is a metric space is fairly standard,
at this point we need to argue that this space is compact. Actually one can see that the topology induced by this metric space is the same as the Euclidean topology, and we only need to show that it is finer. To end this assume that $(x_n)$ is a sequence of points such that $\lim_n |x_n -x| = 0$, for some $x \in \overline{\Omega}$. If $x \in \Omega$ it is clear that $\lim_n d(x_n, x) = 0$. On the other hand if $x \in \partial \Omega$ then let $y_n$ be the closest point on $\partial \Omega$ to $x_n$. It is clear that $d(x_n,y_n) = |x_n-y_n|$ converges to zero as $n \to \infty$ and $\lim_n |x - y_n| = 0$. To complete the argument simply consider the paths between $x$ and $y_n$ contained in $\partial \Omega$, which are obtained by composing a $C^1$ parametrization of $\partial \Omega$ in a neighborhood of $x$ and an affine function. The lengths of these paths converge to zero.

Another interesting class of graphs will be finite graphs with loops at each vertex. Intuitively, these graphs might be thought of as $\epsilon$-adjacency graphs where $(V,d)$ is a finite metric space with integer valued lengths and $\epsilon = 1$.

Note that for two vertices $x$ and $y$ we write $x \sim y$ if $x$ and $y$ are connected with an edge. 
The graph metric between vertices $x$ and $y$ will be denoted by $\dist(x,y)$ in order to distinguish it from the metric in Definition \ref{def:length_graphs}. The diameter of a graph $G$ will be denoted by $\diam (G)$.
We consider the supremum norm on the space of functions on $V$, that is 
$\|u\| = \max_x |u(x)|$. When $V$ is compact length space, this norm makes $C(V,\mathbb{R})$, the space of continuous functions on $V$, a Banach  space. We also use the notation $B(x,\epsilon) = \{y \in V: d(x,y) \leq \epsilon\}$ (we want to emphasize that, in contrast with \cite{PSSW}, the balls $B(x,\epsilon)$ are defined to be closed) .

We consider a version of the tug-of-war game in which the terminal set is empty, but in which the game is stopped after $n$ steps.  
We say that this game has \emph{horizon} $n$. At each step, if the token is at the vertex $x$ Player II pays Player I value $f(x)$. If the final position of the token is a vertex $y$ then at the end Player II pays Player I value $g(y)$. Here $f$ and $g$ are real bounded functions on the set of vertices called the running and the terminal payoff.
Actually this game can be realized as the original stochastic tug-of-war game introduced in \cite{PSSW} played on the graph $G \times \{1,2,\dots ,n\}$ (the edges connecting vertices of the form $(v,i)$ and $(w,i+1)$, where $v$ and $w$ are neighbors in $G$), for the running payoff  $f(v,i) := f(v)$,  the terminal set  $V \times \{n\}$ and the terminal payoff  $g(v,n) := g(v)$.
Note that the same game with vanishing running payoff has appeared in a different context in \cite{CDP}. 

 Define strategy of a player to be a function that, for each $1 \leq k \leq n$, at the $k$-th step maps the previous $k$ positions and $k$ coin tosses to a vertex of the graph which neighbors the current position of the token. For a Player I strategy $\mathcal{S}_{\text{I}}$ and Player II strategy $\mathcal{S}_{\text{II}}$ define $F_n(\mathcal{S}_{\text{I}},\mathcal{S}_{\text{II}})$ as the expected payoff in the game of horizon $n$, when Players I and II play according to strategies $\mathcal{S}_{\text{I}}$ and $\mathcal{S}_{\text{II}}$ respectively.
Define the value for Player I as $u_{\text{I},n}=\sup_{\mathcal{S}_\text{I}}\inf_{\mathcal{S}_{\text{II}}}F_n(\mathcal{S}_{\text{I}},\mathcal{S}_{\text{II}})$, and the value for Player II as $u_{\text{II},n}=\inf_{\mathcal{S}_{\text{II}}}\sup_{\mathcal{S}_{\text{I}}}F_n(\mathcal{S}_{\text{I}},\mathcal{S}_{\text{II}})$. Note that we consider both $u_{\text{I},n}$ and $u_{\text{II},n}$ as functions of the initial position of the token. Intuitively $u_{\text{I},n}(x)$ is the supremum of the values that Player I can ensure to earn and $u_{\text{II},n}(x)$ is the infimum of the values Player II can ensure not to overpay, both in the game of horizon $n$ that starts from $x \in V$. 
It is clear that $u_{\text{I},0}=u_{\text{II},0}=g$  and one can easily check that  $u_{\text{I},n} \leq u_{\text{II},n}$.
In a game of horizon $n+1$ that starts at $x$, if Players I and II play according to the strategies $\mathcal{S}_{\text{I}}$ and $\mathcal{S}_{\text{II}}$ which, in the first step, push the token to $x_{\text{I}}$ and $x_{\text{II}}$ respectively, we have 
\begin{equation}\label{eq:game_value_step}
F_{n+1}(\mathcal{S}_{\text{I}},\mathcal{S}_{\text{II}})(x) = f(x) + \frac{1}{2}\Big(F_{2,n+1}(\mathcal{S}_{\text{I}},\mathcal{S}_{\text{II}})(x_{\text{I}}) + F_{2,n+1}(\mathcal{S}_{\text{I}},\mathcal{S}_{\text{II}})(x_{\text{II}})\Big),
\end{equation}
where $F_{2,n+1}(\mathcal{S}_{\text{I}},\mathcal{S}_{\text{II}})(y)$ is the expected payoff between steps $2$ and $n+1$ conditioned on having position $y$ in the second step of the game. It is easy to see that using the above notation $u_{\text{I},n}=\sup_{\mathcal{S}_\text{I}}\inf_{\mathcal{S}_{\text{II}}}F_{2,n+1}(\mathcal{S}_{\text{I}},\mathcal{S}_{\text{II}})$ and $u_{\text{II},n}=\inf_{\mathcal{S}_{\text{II}}}\sup_{\mathcal{S}_{\text{I}}}F_{2,n+1}(\mathcal{S}_{\text{I}},\mathcal{S}_{\text{II}})$, where  $\mathcal{S}_{\text{I}}$ and $\mathcal{S}_{\text{II}}$ are strategies for Player I and Player II in the game that lasts for $n+1$ steps. Now using induction in $n$ and \eqref{eq:game_value_step} one can check that $u_n:=u_{\text{I},n}=u_{\text{II},n}$ for any $n$, and that the sequence $(u_n)$ satisfies $u_0 = g$ and 
\begin{equation}\label{eq:recursion}
u_{n+1}(x) = \frac{1}{2}\Big(\min_{y \sim x}u_n(y) + \max_{y \sim x}u_n(y)\Big) + f(x).
\end{equation}
Furthermore the infima and the suprema in the definitions of $u_{\text{I},n}$ and $u_{\text{II},n}$ are  achieved for the strategies that at step $k$ of the game of horizon $n$ pull the  token to a neighbor that maximizes (minimizes) the value of $u_{n-k}$ (such a neighbor exists for finite degree graphs, and also in the case of an $\epsilon$-adjacency graph provided $u_{n-k}$ is known a priori to be continuous).

In this paper we will mainly study the described  game through the recursion \eqref{eq:recursion}.

\begin{remark}\label{rem:continuity}
In the case of $\epsilon$-adjacency graphs we will normally assume that the terminal and the running payoff are continuous functions on $V$ and heavily use the fact that the game values $u_n$ are continuous functions. To justify this it is enough to show that, if $u$ is a continuous function on $V$, then so are $\overline{u}^\epsilon(x)=\max_{B(x,\epsilon)}u$ and  $\underline{u}_\epsilon(x)=\min_{B(x,\epsilon)}u$. For this, one only needs to observe that for any two points $x,y \in V$ such that $d(x,y) < \delta$, any point in $B(x,\epsilon)$ is within distance of $\delta$ from some point in $B(y,\epsilon)$, and vice versa.
Now the (uniform) continuity of $\overline{u}^\epsilon$ and $\underline{u}_\epsilon$ follows from the uniform continuity of $u$, which holds by compactness of $V$. 
\end{remark}

For a game played on an arbitrary connected graph of finite diameter, if the sequence of game values $(u_n)$ converges pointwise, the limit $u$ is a solution to the equation
\begin{equation}\label{eq:equation_discrete}
u(x) - \frac{1}{2}(\min_{y \sim x}u(y) + \max_{y \sim x}u(y)) = f(x).
\end{equation}
The \textit{discrete infinity Laplacian} $\Delta_\infty u$ is defined at a vertex $x$ as the negative left hand side of the above equation.
As mentioned before, the case of Euclidean $\epsilon$-adjacency graphs is interesting because of the connection between the game values and the viscosity solutions of \eqref{eq:equation_continuous} defined in Definition \ref{def:viscosity_solutions}. To observe this it is necessary to scale the payoff function by the factor of $\epsilon^2$. Therefore in the case of $\epsilon$-adjacency graphs we define the $\epsilon$\emph{-discrete Laplace operator} as
\[
\Delta_\infty^\epsilon u(x) = \frac{(\min_{y \in B(x,\epsilon)}u(y) + \max_{y \in B(x,\epsilon)}u(y)) - 2u(x)}{\epsilon^2},
\]
and consider the equation
\begin{equation}\label{eq:equation_continuous_finite_step}
-\Delta_\infty^\epsilon u = f.
\end{equation}

\begin{remark}\label{rem:two_laplacians}
Observe that, compared to the discrete infinity Laplacian, we removed a factor $2$ from the denominator. This definition is more natural when considering the infinity Laplacian $\Delta_\infty$ described below. As a consequence we have that the pointwise limit $u$ of the game values $(u_n)_n$ played on an $\epsilon$-adjacency graph with the payoff function $\epsilon^2 f/2$ is a solution to \eqref{eq:equation_continuous_finite_step}.
\end{remark}

We will consider the infinity Laplace equation on a connected domain $\Omega$ with $C^1$ boundary $\partial \Omega$, with vanishing Neumann boundary conditions
\begin{equation}\label{eq:equation_continuous}
\left\{\begin{aligned}
& -\Delta_\infty u = f & \text{in} & \ \Omega,\\
&\nabla_\nu u = 0 & \text{on} & \ \partial \Omega.
\end{aligned}\right.
\end{equation}
Here $\nu$ (or more precisely $\nu(x)$) denotes the normal vector to $\partial \Omega$ at a point $x \in \partial \Omega$. 
The \emph{infinity Laplacian} $\Delta_\infty$ is formally defined as the second derivative in the gradient direction, that is
\begin{equation}\label{eq:infinity_laplacian}
\Delta_\infty u = |\nabla u|^{-2} \sum_{i,j}u_{x_i}u_{x_ix_j}u_{x_j}.
\end{equation}
We will define the solutions of \eqref{eq:equation_continuous} and prove the existence in the following viscosity sense.
First define the operators $\Delta_\infty^+$ and $\Delta_\infty^-$ as follows. For a twice differentiable function $u$ and a point $x$ such that $\nabla u(x) \neq 0$ define $\Delta_\infty^+u$ and $\Delta_\infty^-u$
to be given by \eqref{eq:infinity_laplacian}, that is $\Delta_\infty^+u(x) = \Delta_\infty^-u(x) = \Delta_\infty u(x)$. For $x$ such that $\nabla u(x) = 0$ define $\Delta_\infty^+u(x) = \max\{\sum_{i,j}u_{x_ix_j}(x)\mathbf{v}_i\mathbf{v}_j\}$ and $\Delta_\infty^-u(x) = \min\{\sum_{i,j}u_{x_ix_j}(x)\mathbf{v}_i\mathbf{v}_j\}$, where the maximum and the minimum are taken over all vectors $\mathbf{v}=(\mathbf{v}_1, \dots ,\mathbf{v}_d)$ of Euclidean norm $1$. 

\begin{definition}\label{def:viscosity_solutions}
A continuous function $u\colon \overline{\Omega}\to \mathbb{R}$ is said to be a \textit{(viscosity) subsolution} to \eqref{eq:equation_continuous} if for any function $\varphi \in C^\infty(\overline{\Omega})$ (infinitely differentiable function on an open set containing $\overline{\Omega}$) and a point $x_0 \in \overline{\Omega}$, such that $u-\varphi$ has a strict local maximum at $x_0$, we have either 
\begin{itemize}
\item[a)] $-\Delta_\infty^+\varphi(x_0) \leq f(x_0)$ or 
\item[b)] $x_0 \in \partial \Omega$ and $\nabla_{\nu(x_0)} \varphi(x_0) \leq 0$. 
\end{itemize}
A continuous function $u\colon \overline{\Omega} \to \mathbb{R}$ is said to be a \textit{(viscosity) supersolution} if $-u$ is a (viscosity) subsolution when $f$ is replaced by $-f$ in \eqref{eq:equation_continuous}. A continuous function $u$ is said to be a \textit{(viscosity) solution} to \eqref{eq:equation_continuous} if it is both a subsolution and a supersolution.
\end{definition}

Note  that the notion of (sub, super)solutions does not change if one replaces the condition $\varphi \in C^\infty(\overline{\Omega})$ with $\varphi \in C^2(\overline{\Omega})$.

\begin{remark}\label{rem:strong_viscosity_solutions}
The above definition, while having the advantage of being closed under taking limits of sequences of solutions, might be slightly unnatural because the condition in a) is sufficient for $x_0 \in \partial \Omega$ at which $u-\varphi$ has a strict local maximum. Following \cite{CHI} we can define the \textit{strong (viscosity) subsolution} as a continuous function $u$ such that, for any $\varphi \in C^\infty(\overline{\Omega})$ and any $x_0 \in \overline{\Omega}$, at which $u -\varphi$ has a strict local maximum,  we have 
\begin{itemize}
\item[a')] $-\Delta_\infty^+\varphi(x_0) \leq f(x_0)$, if $x_0 \in \Omega$, 
\item[b')] $\nabla_\nu \varphi(x_0) \leq 0$, if $x_0 \in \partial \Omega$. 
\end{itemize}
Strong (viscosity) supersolutions and solutions are defined analogously. While it is clear that the requirements in this definition are stronger than those in Definition \ref{def:viscosity_solutions}, it can be shown that, when $\Omega$ is a convex domain, any (sub, super)solution is also a strong (sub, super)solution. To show this assume that $u$ is a viscosity subsolution to \eqref{eq:infinity_laplacian} in the sense of Definition \ref{def:viscosity_solutions} and let $x \in \partial\Omega$ and $\varphi \in C^\infty(\overline{\Omega})$ be such that $u-\varphi$ has a strict local maximum at $x$ and $\nabla_\nu \varphi(x) > 0$. Without the loss of generality we can assume that $x=0$ and that the normal vector to $\partial \Omega$ at $0$ is  $\nu=-\mathbf{e}_d$, where $\mathbf{e}_d$ is the $d$-th coordinate vector. Thus by the convexity, the domain $\Omega$ lies above the coordinate plane $x_d=0$. Now define the function $\phi \in C^\infty(\overline{\Omega})$ as $\phi(y) =\varphi(y) + \alpha y_d - \beta (y_d)^2$, for positive $\alpha$ and $\beta$. Since $\nabla \phi (0) = \nabla \varphi (0) + \alpha \mathbf{e}_d$, for $\alpha$ small enough we still have $\nabla_\nu \phi(0) > 0$. Moreover the Hessian matrix of $\phi$ is the same as that of $\varphi$, with the exception of the $(d,d)$-entry which is decreased by $2\beta$. Since $\nabla_\nu \phi(0) > 0$ and $\nabla_\nu \phi(0)$ does not depend on $\beta$, for $\beta$ large enough we will have $-\Delta_\infty^+\phi(0) > f(0)$. Moreover since we can find an open set $U$ such that $\varphi(y) \leq \phi(y)$ for all $y \in \overline{\Omega}\cap U$, we have that $u-\phi$ has again a strict local maximum at $0$.
 Since it doesn't satisfy the conditions in Definition \ref{def:viscosity_solutions}, this leads to a contradiction.
\end{remark}

\subsection{Statements of results}

We want to study the values of games as their horizons tend to infinity. Clearly taking payoff function $f$ to be of constant sign will make the game values diverge. Since increasing the payoff function by a constant $c$ results in the increase of the  value of the game  of horizon $n$ by $nc$, 
 the most we can expect is that we can find a (necessarily unique) shift $f+c$ of the payoff function $f$ for which the game values behave nicely.
The first result in this direction is the following theorem which holds for all connected graphs of finite diameter.

\begin{theorem}\label{thm:first_result}
For any connected graph $G=(V,E)$ of finite diameter and any bounded function $f \colon V \to \mathbb{R}$ there is a constant $c_f$, such that the following holds: 
For any bounded function $u_0 \colon V \to \mathbb{R}$, if $(u_n)$ is the sequence of game values with the terminal and running payoffs $u_0$ and $f$ respectively, then the sequence of functions $(u_n-nc_f)$ is bounded. 
\end{theorem}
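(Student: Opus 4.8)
The plan is to phrase \eqref{eq:recursion} as $u_{n+1}=Tu_n$ for the one-step operator $Tu(x)=\tfrac12\big(\max_{y\sim x}u(y)+\min_{y\sim x}u(y)\big)+f(x)$, and to exploit two elementary properties of $T$: it is \emph{monotone} (if $u\le v$ pointwise then $Tu\le Tv$) and \emph{additively homogeneous} ($T(u+t)=Tu+t$ for every constant $t$). These combine to show that $T$ is \emph{nonexpansive} in the supremum norm, since $v-\|u-v\|\le u\le v+\|u-v\|$ gives $Tv-\|u-v\|\le Tu\le Tv+\|u-v\|$. Nonexpansiveness immediately reduces the theorem to a single initial datum: if $(u_n)$ and $(u_n')$ start from bounded $u_0,u_0'$ then $\|u_n-u_n'\|\le\|u_0-u_0'\|$ for all $n$, so once $(u_n-nc_f)$ is shown bounded for the choice $u_0\equiv 0$ it is bounded for every bounded $u_0$, with the same constant $c_f$ (which also proves that $c_f$ is unique). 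I will therefore fix $u_0\equiv 0$ and write $a_n=\inf_V u_n$, $b_n=\sup_V u_n$.

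The core of the argument is a subadditivity/superadditivity scheme for $a_n$ and $b_n$ feeding into Fekete's lemma, and it requires exactly one nontrivial a priori input, isolated below as the main obstacle: a \textbf{uniform oscillation bound} $\osc(u_n)=b_n-a_n\le C$ for all $n$, with $C$ depending only on $f$ and $\diam(G)$. Granting this, monotonicity and homogeneity do the rest. Indeed, from $u_n\le a_n+C$ and $u_n\ge b_n-C$ together with $u_{n+m}=T^m u_n$ and $T^m(t+0)=t+T^m0=t+u_m$ one obtains
\begin{equation*}
a_{n+m}\le a_n+a_m+C,\qquad b_{n+m}\ge b_n+b_m-C .
\end{equation*}
Thus $a_n+C$ is subadditive and $b_n-C$ is superadditive, so by Fekete's lemma the limits $\lim_n a_n/n$ and $\lim_n b_n/n$ exist; the uniform oscillation bound forces them to coincide, and I define $c_f$ to be this common value. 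Fekete's lemma moreover yields the two-sided comparison with the limiting rate, $a_n\ge nc_f-C$ and $b_n\le nc_f+C$. Since $a_n\le u_n(x)\le b_n$ for every $x$, this gives $\|u_n-nc_f\|\le C$ for all $n$, which is the assertion.

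The step I expect to be the genuine obstacle is the uniform oscillation bound, and it is the only place where the finiteness of $\diam(G)$ is essential (without it $a_n/n$ and $b_n/n$ could have different limits and $(u_n-nc_f)$ could carry an unbounded lower-order drift, even though the per-step increments $u_{n+1}-u_n$ need not converge at all). My plan for it is to bound $|u_n(x)-u_n(y)|$ for adjacent $x\sim y$ by a constant independent of $n$, and then to sum along a path of length at most $\diam(G)$ joining any two vertices. The adjacent bound is naturally obtained by a coupling/maximum-principle argument: running the two horizon-$n$ games started at $x$ and at $y$ with coupled coin tosses and letting the lagging player mirror the opponent so as to keep the tokens at graph distance one, the two payoff streams differ only through the running payoff accumulated during a controlled ``catch-up,'' which is bounded in terms of $\|f\|$ and $\diam(G)$. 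Equivalently, one may argue directly from \eqref{eq:recursion} that $\osc(u_{n+1})$ cannot exceed $\osc(u_n)$ by more than a fixed amount while a compensating contraction acts across distance $\diam(G)$. Either route uses connectedness and finite diameter in an essential way, and carrying out this estimate cleanly is where the real work lies.
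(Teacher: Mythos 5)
Your overall architecture is exactly the paper's: reduce to $u_0\equiv 0$ by nonexpansiveness (the paper's Lemma \ref{lemma:comparison}), get exact sub/superadditivity of $\max u_n$ and $\min u_n$ from monotonicity plus additive homogeneity, apply Fekete, and close the gap between the two rates with a uniform oscillation bound. All of that part of your write-up is correct. But the uniform oscillation bound is the entire content of the theorem (it is the paper's Lemma \ref{lemma:boundedness_of_differences}), and you have not proved it — you have only sketched a plan, and the plan as stated does not work. If the lagging player merely ``mirrors the opponent so as to keep the tokens at graph distance one,'' then the two running-payoff streams differ by up to $\osc(f,1)$ at \emph{every} one of the $n$ steps, so the accumulated difference grows linearly in $n$; nothing is gained unless the two tokens actually merge, and specifying a strategy that forces them to merge in bounded expected time against an adversarial opponent is precisely the missing work. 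Your alternative one-line suggestion (``$\osc(u_{n+1})$ cannot exceed $\osc(u_n)$ by more than a fixed amount while a compensating contraction acts across distance $\diam(G)$'') is too vague to assess.

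For comparison, the paper's argument avoids the adjacent-pair-plus-path-summation scheme entirely. After normalizing $\min f=0$ (so that $u_{n-k}\le u_n$, by comparing terminal payoffs $0\le u_k$ — a monotonicity you would also need and do not mention), it fixes a single target vertex $z$ and has Player II play ``pull towards $z$'' until the token first reaches $z$ at time $T$, then switch to the optimal strategy for the remaining horizon $n-T$. The process $(\diam(G)-\dist(X_t,z))^2-t$ stopped at $T$ is a submartingale, so optional stopping gives $\mathbb{E}(T)\le\diam(G)^2$, and this suboptimal play yields $u_n(x)\le \mathbb{E}(T)\max f+u_n(z)\le u_n(z)+\diam(G)^2\max f$ for \emph{all} pairs $x,z$ simultaneously, which is the bound $\osc(u_n)\le\diam(G)^2(\max f-\min f)$ after undoing the shift. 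Some one-sided strategy argument of this kind (or the equivalent comparison-function argument) is indispensable; as it stands your proposal has a genuine gap at its only nontrivial step.
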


We will call $c_f$ from Theorem \ref{thm:first_result} the \textit{Player I's long term advantage} for function $f$. 
For both adjacency graphs and finite graphs with loops we have convergence of the game values.

\begin{theorem}\label{thm:graphs_with_loops}
Let $G=(V,E)$ be either an adjacency graph, or a finite graph with a loop at each vertex. Let $f, u_0 \colon V \to \mathbb{R}$ be functions on the set of vertices, which are assumed to be continuous if $G$ is an adjacency graph. Assume $c_f =0$.
In a game played on $G$ with the terminal and running payoffs $u_0$ and $f$ respectively, the sequence of game values $(u_n)$ is uniformly convergent.
\end{theorem}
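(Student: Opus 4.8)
My plan is to study the recursion \eqref{eq:recursion} through the one-step update operator $T$, where $Tu(x)=\frac12(\min_{y\sim x}u(y)+\max_{y\sim x}u(y))+f(x)$, so that $u_{n+1}=Tu_n$. The whole argument rests on three elementary properties of $T$: it is monotone ($u\le v\Rightarrow Tu\le Tv$), additively homogeneous ($T(u+c)=Tu+c$ for a constant $c$), and consequently nonexpansive in the sup norm. Since $c_f=0$, Theorem \ref{thm:first_result} gives that the orbit $(u_n)$ is bounded. I would first record that the increments $v_n:=u_{n+1}-u_n$ satisfy that $\min_V v_n$ is nondecreasing and $\max_V v_n$ is nonincreasing: from $u_{n+1}\ge u_n+\min_V v_n$, monotonicity and homogeneity give $u_{n+2}\ge u_{n+1}+\min_V v_n$, and symmetrically for the max. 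Hence $\min_V v_n\uparrow m$ and $\max_V v_n\downarrow M$, and boundedness of the partial sums $u_n-u_0=\sum_{k<n}v_k$ (evaluated at a fixed vertex) forces $m\le 0\le M$. Nonexpansiveness also makes $\|v_n\|=\max(\max_V v_n,-\min_V v_n)$ nonincreasing, with limit $L=\max(M,-m)$.

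The heart of the matter is to show $L=0$, i.e. $m=M=0$; granting this, the proof concludes quickly. Indeed, a convergent subsequence $u_{n_k}\to u^*$, together with continuity of $T$ and $\|v_{n_k}\|\to L=0$, yields $Tu^*=u^*$, so that $u^*$ is a fixed point, i.e. a solution of \eqref{eq:equation_discrete}. Then $\|u_n-u^*\|=\|T^nu_0-T^nu^*\|$ is nonincreasing, hence convergent, and the subsequence forces its limit to be $0$, giving the uniform convergence $u_n\to u^*$. For finite graphs with loops the required convergent subsequence exists by boundedness in $\mathbb R^{|V|}$; for adjacency graphs I would instead establish equicontinuity of $(u_n)$ and invoke the Arzel\`a--Ascoli theorem in $C(V)$. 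For equicontinuity I would argue game-theoretically: for nearby starting points $x,x'$ with $d(x,x')<\delta$ one couples the two games so that the player winning a given toss mirrors his move, keeping the two tokens within distance $\delta$ (using that $x,x'$ are neighbors, cf. Remark \ref{rem:continuity}) until they coincide; since the symmetric dynamics on a graph of finite diameter couples in a number of steps with finite expectation independent of the horizon, the payoff difference is bounded by a fixed multiple of the modulus of continuity of $f$ at scale $\delta$, uniformly in $n$.

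To prove $L=0$, granting relative compactness of the orbit, I would pass to the $\omega$-limit set $\Omega$, which is compact and $T$-invariant, and on which $T$ restricts to a surjective isometry. Choosing $p$ in a minimal subset of $\Omega$, its orbit is uniformly recurrent, so the nondecreasing quantity $\min_V(Tp_n-p_n)$ returns arbitrarily close to its initial value and is therefore constant along the orbit; likewise $\max_V(Tp_n-p_n)$ is constant. Finally I would use the loops together with connectivity: the pointwise inequalities $\min_{y\sim x}v_n(y)\le v_{n+1}(x)\le\max_{y\sim x}v_n(y)$ let the extremal level sets of $v_n$ propagate along edges, and uniform recurrence forces them to fill $V$, so each $v_n$ is constant on $\Omega$; a constant increment contradicts boundedness of the orbit unless it vanishes. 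Hence $v_n\equiv0$ on $\Omega$, i.e. $\Omega$ consists of fixed points and $L=0$.

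The genuinely delicate points, which I expect to be the main obstacles, are two. First, the uniform (horizon-independent) equicontinuity in the adjacency-graph case: the naive estimate propagating a modulus of continuity through \eqref{eq:recursion} grows linearly in $n$, and must be replaced by the coupling estimate sketched above. Second, closing the step $L=0$, where one must combine the monotone increments with the loop-driven propagation of extrema and connectivity. The loops are essential here: a two-vertex graph with a single edge and no loops already yields the non-convergent period-two orbit $u_{n+1}(a)=u_n(b)$, $u_{n+1}(b)=u_n(a)$.
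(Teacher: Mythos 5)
Your overall architecture matches the paper's: monotonicity/additive homogeneity/nonexpansiveness of the update operator (the paper's Lemma \ref{lemma:lipschitz_properties}), monotone extremal increments with $m\le 0\le M$ forced by $c_f=0$, the observation that once a subsequential limit is a fixed point the nonexpansiveness upgrades subsequential convergence to full uniform convergence, compactness in $\mathbb{R}^{|V|}$ for finite graphs, and Arzel\`a--Ascoli via equicontinuity for adjacency graphs. The loops enter for exactly the reason you identify. However, there are two genuine gaps at precisely the two points you flag as delicate.

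First, the coupling argument for equicontinuity does not work as described. If both players mirror their moves, the two tokens maintain separation of order $\delta$ forever and never coalesce; there is no mechanism in an adversarial game forcing coalescence in bounded expected time, since neither player is incentivized to chase the other token and you may only prescribe one player's strategy when bounding a game value. Without coalescence the running-payoff discrepancy accumulates as $n\,\osc(f,\delta)$, which is exactly the linear-in-$n$ loss you set out to avoid. The paper's proof of equicontinuity (Lemmas \ref{lemma:convergence_of_differences}, \ref{lemma:equicontinuity_help} and \ref{prop:equicontinuity}) is instead a purely analytic ``doubling'' argument: assuming the oscillation at scale $\rho$ stays above $\delta_0$ along a subsequence, one propagates a large oscillation backwards through the recursion, roughly doubling it at each step modulo errors controlled by $\osc(f,\rho)$ and by $\|u_{n+1}-u_n\|$ (whose convergence to $0$ must be proved first, and is itself a separate doubling argument), until the accumulated growth contradicts the boundedness of $(u_n)$ guaranteed by $c_f=0$. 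This step also uses the length-space structure (existence of common neighbors of nearby points), which your coupling does not exploit.

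Second, in your proof that $L=0$, the claim that ``uniform recurrence forces the extremal level sets of $v_n$ to fill $V$, so each $v_n$ is constant on $\Omega$'' is not justified. The propagation you have available is backward: if $v_{n+1}(x)=M$ then $x$ has a neighbor $y$ with $v_n(y)=M$ and $u_n(y)=\min_{z\sim x}u_n(z)$; this does not cause the level set to grow, and it can remain a proper subset of $V$ for all $n$. The paper's Proposition \ref{lemma:subsequential_to_complete_convergence} closes this step differently: having shown (by continuity of $v\mapsto M_k^f(v)$ and recurrence of the orbit near the subsequential limit $w$, without any minimal-set or isometry machinery) that $M_n^f(w)\equiv M$ is constant, it tracks $t_n=\min_{V_n}w_n$ over the set $V_n$ where the increment $M$ is attained and uses the loop $x\sim x$ to get $w_n(x)\ge\min_{z\sim x}w_n(z)=w_{n-1}(y)+M\ge t_{n-1}+M$, hence $t_n\ge t_{n-1}+M$, contradicting boundedness unless $M=0$. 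Your $\omega$-limit-set framework is compatible with this repair, but as written the ``fill $V$'' step is a gap, and in the adjacency case the framework cannot even be set up until equicontinuity is established by some other means.
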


The following theorem gives the correspondence between the tug-of-war games and the equation \eqref{eq:equation_discrete}.
 While for adjacency graphs and finite graphs with loops this is a straightforward corollary of Theorem \ref{thm:graphs_with_loops}, this result also holds for all finite graphs, even when Theorem \ref{thm:graphs_with_loops} may fail to hold (see Example \ref{example:no_convergence}).

\begin{theorem}\label{thm:solutions_on_general_graphs}
Let $G=(V,E)$ be either an adjacency graph, or a finite graph. Let $f, u_0 \colon V \to \mathbb{R}$ be functions on the set of vertices, which are assumed to be continuous if $G$ is an adjacency graph.
Then the equation \eqref{eq:equation_discrete} has a solution $u$ if and only if $c_f=0$.
\end{theorem}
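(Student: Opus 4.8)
The plan is to reformulate the recursion \eqref{eq:recursion} and the equation \eqref{eq:equation_discrete} through the single operator
\[
Tu = Su + f, \qquad Su(x) = \tfrac{1}{2}\Big(\min_{y\sim x}u(y) + \max_{y\sim x}u(y)\Big),
\]
so that the game values satisfy $u_{n+1}=Tu_n$ and a solution of \eqref{eq:equation_discrete} is exactly a fixed point, $Tu=u$. Three structural facts will be used: $T$ is order preserving; $T$ is non-expansive for the supremum norm, $\|Tu-Tv\|\le\|u-v\|$ (since $u\mapsto\max_{y\sim x}u(y)$ and $u\mapsto\min_{y\sim x}u(y)$ are each $1$-Lipschitz); and $T(u+c\mathbf 1)=Tu+c\mathbf 1$ for every constant $c$, where $\mathbf 1$ denotes the function identically equal to $1$.

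For the easy direction, suppose $u$ solves \eqref{eq:equation_discrete}, i.e.\ $Tu=u$; note $u$ is bounded, being a finite vector on a finite graph and a continuous function on the compact space $V$ in the adjacency case. Running \eqref{eq:recursion} from $u_0=u$ gives $u_n=T^n u=u$ for all $n$. By Theorem \ref{thm:first_result} the sequence $(u_n-nc_f)=(u-nc_f\mathbf 1)$ is bounded in $n$, and since $u$ is bounded this forces $c_f=0$. For the converse assume $c_f=0$. If $G$ is an adjacency graph or a finite graph with loops, Theorem \ref{thm:graphs_with_loops} applies: for any $u_0$ the game values $u_n$ converge uniformly to some $u$, and passing to the limit in $u_{n+1}=Tu_n$, using continuity of $T$ for uniform convergence (non-expansiveness, or Remark \ref{rem:continuity} in the adjacency case), yields $Tu=u$.

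The remaining, and main, case is a general finite graph $G$, where Theorem \ref{thm:graphs_with_loops} may fail: the orbit $(u_n)$ can genuinely oscillate (Example \ref{example:no_convergence}), so one cannot simply take a limit. I would argue as follows. Because $c_f=0$, Theorem \ref{thm:first_result} shows that, starting from any $u_0$, the sequence $(u_n)$ is bounded in $\mathbb{R}^V$; in particular $\osc(u_n)$ is bounded uniformly in $n$. The aim is to produce $u$ and a constant $c$ with $Tu=u+c\mathbf 1$: iterating gives $T^n u=u+nc\mathbf 1$, so that $(u+nc\mathbf 1)$ is a game-value sequence, and Theorem \ref{thm:first_result} then forces $c=c_f=0$, i.e.\ $Tu=u$. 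Since $G$ is finite, $T$ is a continuous piecewise-affine (min and max of finitely many coordinates, shifted by $f$) non-expansive self-map of $\mathbb{R}^V$, so it admits an invariant half-line $t\mapsto u+t\eta$ with $T(u+t\eta)=u+(t+1)\eta$ for all large $t$ (Kohlberg's theorem on invariant half-lines of non-expansive piecewise-linear maps). The points $u+(t_0+k)\eta$ form a game-value sequence, so by Theorem \ref{thm:first_result} the displacement satisfies $\eta=c_f\mathbf 1$; with $c_f=0$ we get $\eta=0$ and hence the desired fixed point.

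The hard part is precisely this fixed-point assertion for $T$ in the oscillatory regime, where the naive devices fail: the $\limsup$ and $\liminf$ of $(u_n)$ yield a sub- and a supersolution but in the wrong order, and the supremum norm is not strictly convex, so asymptotic-center arguments do not apply. If one prefers to avoid citing Kohlberg, an alternative is to approximate by the strictly contractive discounted operators $T_\beta u=\beta\, Su+f$, $\beta\in(0,1)$, whose unique fixed points $w_\beta$ exist by the Banach fixed point theorem; one then establishes a bound on $\osc(w_\beta)$ uniform in $\beta$ from the uniform bound on $\osc(u_n)$ provided by Theorem \ref{thm:first_result}, and passes to a subsequential limit as $\beta\to1$, after normalizing by $\min w_\beta$, to obtain a solution of \eqref{eq:equation_discrete}. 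Securing this uniform oscillation estimate, which is exactly where finiteness of $G$ and the hypothesis $c_f=0$ enter, is the crux of the finite-graph case.
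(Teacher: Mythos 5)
Your argument is correct, and for the easy direction and for adjacency graphs and finite graphs with loops it coincides with the paper's proof (a solution gives a constant orbit, so Theorem~\ref{thm:first_result} forces $c_f=0$; conversely Theorem~\ref{thm:graphs_with_loops} plus continuity of the iteration gives a fixed point). For the only genuinely hard case, a finite graph without a loop at every vertex, you take a genuinely different route. The paper is self-contained: it minimizes $D_f(u,c)=\|A_{f-c}u-u\|$ over pairs $(u,c)$, with compactness supplied by the a priori oscillation bound of Lemma~\ref{lemma: estimates}, and then rules out a positive minimum by a combinatorial analysis of the extremal sets $S_{u,c}^{\pm}$ (``removable'' sets and ``tight'' minimizers) that would otherwise produce minimizers of unbounded oscillation. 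You instead invoke Kohlberg's invariant half-line theorem, which does apply here since your $T$ (the paper's $A_f$) is piecewise affine on $\mathbb{R}^V$ and nonexpansive for the supremum norm by Lemma~\ref{lemma:lipschitz_properties}; your identification $\eta=c_f\mathbf{1}$ via Theorem~\ref{thm:first_result} is exactly parallel to the paper's identification of its shift $c$ with $c_f$ at the end of its proof. Your route is much shorter but outsources the real work to a nontrivial external theorem whose proof is itself a delicate piecewise-linear argument; the paper's route is longer but elementary and exhibits the structure of the extremal sets. Two remarks. First, your Kohlberg-free alternative via the discounted operators $T_\beta$ is not complete as written: the uniform bound on $\osc(w_\beta)$ is a genuine Tauberian step relating the discounted and finite-horizon iterations of a nonlinear map, which you correctly flag but do not supply, so the proposal stands on the Kohlberg citation alone. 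Second, the ``naive device'' you dismiss in fact works: $v=\limsup_n u_n$ (well defined since $(u_n)$ is bounded when $c_f=0$) satisfies $A_fv\geq v$ by monotonicity and continuity of $A_f$, hence $(A_f^n v)_n$ is nondecreasing, and it is bounded by Theorem~\ref{thm:first_result}; its pointwise (hence, on a finite vertex set, uniform) limit is a fixed point. The sub/supersolution sandwich is indeed in the wrong order, but monotone iteration from the subsolution alone suffices once Theorem~\ref{thm:first_result} supplies the upper bound, so this would give an even more economical proof than either yours or the paper's.
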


Let $V$ be a compact length space and a consider a function $f \in C(V)$. To emphasize the dependence on $\epsilon$ define $c_f(\epsilon)$ as the Player I's long term advantage for a game played on the $\epsilon$-adjacency graph defined on $V$ with the running payoff $f$.
As already mentioned, to study the limiting case $\epsilon \downarrow 0$ for $\epsilon$-adjacency graphs, we need to scale the running payoff function by a factor of $\epsilon^2$, that is, the we take $\epsilon^2 f$ as the running payoff function. Note that the Player I's long term advantage corresponding to this game is equal to $\epsilon^2 c_f(\epsilon)$. 



The first problem one encounters is the fact that $c_f(\epsilon)$ depends on the value of $\epsilon$ (see Example \ref{example:shifting_parameters_continuous}). The following theorem gives meaning to the notion of Player I's long term advantage in the continuous case.

\begin{theorem}\label{thm:convergence_of_shifting_constants}
For any compact length space $V$ and any continuous function $f \colon V \to \mathbb{R}$ the limit $\lim_{\epsilon \downarrow 0}c_f(\epsilon)$ exists.
\end{theorem}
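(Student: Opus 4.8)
The plan is to exhibit $c_f(\epsilon)$ as an additive eigenvalue of the scale-$\epsilon$ averaging operator and then to show that $\epsilon\mapsto c_f(\epsilon)$ satisfies a Cauchy estimate as $\epsilon\downarrow 0$. Applying Theorem~\ref{thm:solutions_on_general_graphs} to the payoff $f-c$, and using that shifting the running payoff by a constant shifts Player I's long term advantage by the same constant (so $c_{f-c}(\epsilon)=c_f(\epsilon)-c$), one sees that $c_f(\epsilon)$ is the unique constant $c$ for which there is a continuous $w_\epsilon$ solving $w_\epsilon=\tfrac12(m_\epsilon w_\epsilon+M_\epsilon w_\epsilon)+f-c$, where $m_\epsilon u(x)=\min_{B(x,\epsilon)}u$ and $M_\epsilon u(x)=\max_{B(x,\epsilon)}u$. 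Writing $T_\epsilon u=\tfrac12(m_\epsilon u+M_\epsilon u)+f$, this reads $T_\epsilon w_\epsilon=w_\epsilon+c_f(\epsilon)$. Since $T_\epsilon$ is monotone and commutes with the addition of constants, and since $T_\epsilon^n v=u_n$ grows like $nc_f(\epsilon)+O(1)$ by Theorem~\ref{thm:first_result}, iterating gives for \emph{every} bounded $v$ the squeeze $\inf_x(T_\epsilon v-v)\le c_f(\epsilon)\le\sup_x(T_\epsilon v-v)$. Taking $v$ constant already yields $\min f\le c_f(\epsilon)\le\max f$, so $\limsup_{\epsilon\downarrow0}c_f(\epsilon)$ and $\liminf_{\epsilon\downarrow0}c_f(\epsilon)$ are finite, and it remains only to prove they coincide.

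The comparison across scales rests on a semigroup identity special to length spaces. Because a compact length space is geodesic, every point within distance $a+b$ of $x$ lies within distance $b$ of some point within distance $a$ of $x$, so $B(x,a+b)=\bigcup_{y\in B(x,a)}B(y,b)$ and hence $m_{a+b}=m_a\circ m_b$ and $M_{a+b}=M_a\circ M_b$ \emph{exactly}. For $\epsilon'\le\epsilon$ I then apply the coarse operator $T_\epsilon$ to the fine eigenfunction $w_{\epsilon'}$: since $T_\epsilon w_{\epsilon'}-w_{\epsilon'}=c_f(\epsilon')+\tfrac12\big[(m_\epsilon-m_{\epsilon'})w_{\epsilon'}+(M_\epsilon-M_{\epsilon'})w_{\epsilon'}\big]$, the squeeze from the first paragraph applied with $v=w_{\epsilon'}$ gives $|c_f(\epsilon)-c_f(\epsilon')|\le\tfrac12\big(\|(m_\epsilon-m_{\epsilon'})w_{\epsilon'}\|+\|(M_\epsilon-M_{\epsilon'})w_{\epsilon'}\|\big)\le\sup_x\osc_{B(x,\epsilon)}w_{\epsilon'}$, the last step because replacing the radius $\epsilon'$ by $\epsilon$ inside a $\min$ or $\max$ moves its value by at most the oscillation over $B(x,\epsilon)$. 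Thus, if the eigenfunctions $w_{\epsilon'}$ can be taken Lipschitz with a common constant $L$, then $|c_f(\epsilon)-c_f(\epsilon')|\le 2L\epsilon$ for all $\epsilon'\le\epsilon$, which is exactly the Cauchy estimate forcing $\limsup=\liminf$.

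The main obstacle is therefore the uniform regularity of the eigenfunctions: that $w_\epsilon$ may be chosen $L$-Lipschitz with respect to $d$ for an $L$ independent of $\epsilon$. I would prove this through the game representation by the standard ``pull-together'' coupling: to bound $w_\epsilon(x)-w_\epsilon(y)$ one runs the games started at $x$ and at $y$ with the lagging player always pulling the two tokens toward one another along a geodesic, so that their distance is controlled and the bounded, zero-drift (after the shift) running payoff contributes to the difference of values an amount governed only by $\|f\|$ and $\diam(V)$, uniformly in $\epsilon$. This is consistent with the fact that in a length space the distance function $d(x_0,\cdot)$ is discrete $\infty$-harmonic away from its tip, which pins down the extremal admissible slope. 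Making this increment bound uniform in $\epsilon$, with no terminal set available to halt the game, is where I expect essentially all of the difficulty to lie; once it is in hand the estimate of the previous paragraph gives $|c_f(\epsilon)-c_f(\epsilon')|\le 2L\epsilon$ for $\epsilon'\le\epsilon$, so $c_f(\epsilon)$ is Cauchy as $\epsilon\downarrow0$ and the limit exists.
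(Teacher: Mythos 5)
Your first two paragraphs are sound: the squeeze $\inf_x(T_\epsilon v-v)\le c_f(\epsilon)\le\sup_x(T_\epsilon v-v)$ for arbitrary bounded $v$ is exactly the paper's Lemma \ref{lemma:starting_from_the_subsolution}, and the perturbation identity for $T_\epsilon w_{\epsilon'}-w_{\epsilon'}$ is correct. The fatal problem is the regularity claim on which everything then rests: in the normalization you are forced to use (running payoff $f-c$, so that the additive eigenvalue is $c_f(\epsilon)$ itself), the eigenfunctions are \emph{not} Lipschitz with a constant independent of $\epsilon'$ --- their Lipschitz constants blow up like $(\epsilon')^{-2}$. Concretely, take $V=[0,1]$ and $f(x)=x$. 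The equation $w=\tfrac12(m_{\epsilon'}w+M_{\epsilon'}w)+f-c$ is $-\Delta_\infty^{\epsilon'}w=2(f-c)/(\epsilon')^2$, whose solutions behave like the ODE solution of $-w''=2(x-c)/(\epsilon')^2$ with Neumann conditions; this forces $c=1/2$ and $w'(1/2)=1/(4(\epsilon')^2)$. (This matches Lemma \ref{lemma:boundedness_of_differences}, which only bounds $\max w-\min w$ by $\diam(G)^2\,\osc(f)\sim(\epsilon')^{-2}$.) Hence $\sup_x\osc_{B(x,\epsilon)}w_{\epsilon'}\gtrsim\epsilon/(\epsilon')^2\to\infty$ as $\epsilon'\downarrow0$, and your proposed bound $|c_f(\epsilon)-c_f(\epsilon')|\le\sup_x\osc_{B(x,\epsilon)}w_{\epsilon'}$ is vacuous. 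No coupling argument can rescue the uniform Lipschitz claim, because it is false. Switching to the normalization in which the eigenfunctions \emph{are} uniformly Lipschitz (running payoff $\tfrac{\epsilon^2}{2}(f-c)$, as in Proposition \ref{prop: equicontinuity and boundedness for varying balls}) does not help either: there the eigenvalue is $\tfrac{\epsilon^2}{2}c_f(\epsilon)=O(\epsilon^2)$, while the perturbation error $\osc_{B(x,\epsilon)}u_{\epsilon'}=O(\epsilon)$ swamps it. Either way the Cauchy estimate $|c_f(\epsilon)-c_f(\epsilon')|\le 2L\epsilon$ does not follow.

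This is precisely the difficulty the paper's proof is built to avoid. Instead of plugging the fine eigenfunction directly into the coarse operator, it replaces it by its sup-convolution $\overline{u}^{\epsilon}$ and uses the Armstrong--Smart comparison (Lemma \ref{lemma:comparison_of_steps}): if $-\Delta_\infty^{\epsilon}u\le f$ then $-\Delta_\infty^{2\epsilon}\overline{u}^{\epsilon}\le\overline{f}^{2\epsilon}$, and a companion inequality at scale $3\epsilon$. The point is that sup-convolution changes the right-hand side only by $\osc(f,2\epsilon)$ \emph{measured in the $f$-scale}, i.e.\ the perturbation of $-\Delta_\infty^\epsilon$ is $o(\epsilon^2)$ rather than $O(\epsilon)$; this is a genuine cancellation that your crude $\osc_{B(x,\epsilon)}$ bound cannot see. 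The price is that the comparison only works for the scale ratios $2$ and $3$, which is why the paper then needs Proposition \ref{prop: shifting constant estimates} (iterating down dyadic and triadic sequences), the continuity of $\epsilon\mapsto c_f(\epsilon)$ (Lemma \ref{lemma:continuity_of_shiftings_in_step_sizes}), and the density of $\{2^{-m}3^{n}\}$ in $\mathbb{R}^{+}$ to compare arbitrary small scales. Your outline contains no substitute for this mechanism, so the gap is not a technical lemma to be filled in but a missing central idea.
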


We will denote the limit from the above theorem by $\overline{c}_f=\lim_{\epsilon \downarrow 0}c_f(\epsilon)$.

\begin{theorem}\label{thm:general_continuous_solutions}
Let $V$ be a compact length space, and $(\epsilon_n)$ a sequence of positive real numbers converging to zero. Any sequence $(u_n)$  of continuous functions on $V$ satisfying $-\Delta_\infty^{\epsilon_n}u_n = f- c_f(\epsilon_n)$ and such that $0$ is in the range of $u_n$ for all $n$, has a subsequence converging to a Lipshitz continuous function. Moreover the Lipshitz constant is bounded by a universal constant multiple of $\diam (V) \|f\|$.
\end{theorem}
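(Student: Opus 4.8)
The plan is to establish a uniform Lipschitz bound for the $u_n$ by a discrete comparison argument and then invoke the Arzel\`a--Ascoli theorem. Throughout fix $n$, write $u=u_n$, $\epsilon=\epsilon_n$, $c=c_f(\epsilon_n)$, and set
\[
S(x)=\max_{B(x,\epsilon)}u-u(x)\ge 0, \qquad T(x)=u(x)-\min_{B(x,\epsilon)}u\ge 0 .
\]
The maxima and minima are attained because $u$ is continuous and $V$ is compact. Rewriting $-\Delta_\infty^{\epsilon}u=f-c$ in terms of these quantities gives the identity $T(x)-S(x)=\epsilon^2\big(f(x)-c\big)$ at every $x$. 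I would first record that $c=c_f(\epsilon)\in[\min_V f,\max_V f]$: replacing $f$ by $f-\max_V f\le 0$ makes $\max_V u_n$ nonincreasing along the recursion \eqref{eq:recursion}, which by Theorem \ref{thm:first_result} forces the corresponding advantage to be $\le 0$, and symmetrically for the lower bound. Consequently $A:=\|f-c\|_\infty\le\osc(f)\le 2\|f\|$, so it suffices to bound the Lipschitz constant by a universal multiple of $A\,\diam(V)$.

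The heart of the argument is a pair of coupled estimates relating the maximal one-step increment $L:=\max_x S(x)$ to the oscillation $O:=\osc(u)$. For the first, choose $x_0$ with $S(x_0)=L$ and build an ascending chain $x_{i+1}\in B(x_i,\epsilon)$ with $u(x_{i+1})=u(x_i)+S(x_i)$. Since $x_i\in B(x_{i+1},\epsilon)$ we get $T(x_{i+1})\ge S(x_i)$, and the identity above yields $S(x_{i+1})\ge S(x_i)-\epsilon^2 A$, hence $S(x_i)\ge L-i\epsilon^2A$. Summing,
\[
O\ \ge\ u(x_k)-u(x_0)\ =\ \sum_{i=0}^{k-1}S(x_i)\ \ge\ kL-\tfrac12 k^2\epsilon^2A ,
\]
and optimizing over the integer $k\approx L/(\epsilon^2A)$ produces $L\le\epsilon\sqrt{2AO}$. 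For the second estimate I would use that $V$ is a length space: any two points are joined by a rectifiable path, which can be sampled at arclength spacing $\le\epsilon$ so that consecutive samples are $\epsilon$-adjacent; since $|u(q)-u(p)|\le L+\epsilon^2A$ on each step, summing over the $\le \diam(V)/\epsilon+1$ steps between the maximizer and minimizer of $u$ gives $O\le(\diam(V)/\epsilon+1)(L+\epsilon^2A)$.

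Substituting $L\le\epsilon\sqrt{2AO}$ into this bound and solving the resulting quadratic inequality in $\sqrt{O}$ (for $n$ large enough that $\epsilon_n\le\diam(V)$) yields $O\le C_1 A\,\diam(V)^2$ and then $L/\epsilon\le\sqrt{2AO}\le C_2 A\,\diam(V)$ with universal constants, i.e. a bound $L_n/\epsilon_n\le C\|f\|\diam(V)$ independent of $n$. The normalization that $0$ lies in the range of $u_n$, together with the oscillation bound, gives $\|u_n\|_\infty\le C'\|f\|\diam(V)^2$, so the family is uniformly bounded. The same path-sampling estimate gives $|u_n(x)-u_n(y)|\le (L_n/\epsilon_n)\,d(x,y)+O(\epsilon_n)\le C\|f\|\diam(V)\,d(x,y)+O(\epsilon_n)$, which is uniform equicontinuity. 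Arzel\`a--Ascoli then extracts a uniformly convergent subsequence $u_{n_k}\to u$, and passing to the limit (using $\epsilon_{n_k}\to0$) shows $u$ is Lipschitz with constant at most $C\|f\|\diam(V)$.

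The main obstacle I anticipate is closing the self-referential loop between the two estimates so as to extract bounds on $O$ and on $L/\epsilon$ that are free of both $u_n$ and $\epsilon_n$ and carry a genuinely universal constant; a secondary technical point is the length-space subdivision, where one must verify that the sampled points of a near-geodesic are actually within distance $\epsilon$ (so that they are adjacent in the $\epsilon$-adjacency graph) and that the number of samples is controlled by $\diam(V)/\epsilon$.
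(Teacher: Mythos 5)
Your proposal is correct and follows essentially the same route as the paper: the paper reduces the theorem to Proposition \ref{prop: equicontinuity and boundedness for varying balls}, whose proof is exactly your two coupled estimates --- the Armstrong--Smart ``marching'' chain $x_{i+1}\in B(x_i,\epsilon)$ along maximizers giving $T_\epsilon^+u(x_i)\geq T_\epsilon^+u(x_0)-2i\epsilon^2\|f\|$, combined with the length-space path-sampling bound $u(y)-u(x)\leq(d(x,y)/\epsilon+1)\max T_\epsilon^+u$ --- followed by Arzel\`a--Ascoli. The only cosmetic difference is the order in which the two inequalities are combined (you first extract $L\leq\epsilon\sqrt{2AO}$ and then solve a quadratic in $\sqrt{O}$, while the paper substitutes directly and optimizes the chain length $m$), and both yield the same universal bound $C\,\diam(V)\|f\|$.
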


For Euclidean $\epsilon$-adjacency graphs, the limits from Theorem \ref{thm:general_continuous_solutions}  give us viscosity solutions of \eqref{eq:equation_continuous}.

\begin{theorem}\label{thm:existence_of_continuous_solutions}
Let  $\Omega$ be a domain of finite diameter with $C^1$ boundary $\partial \Omega$ and $f \colon \overline{\Omega} \to \mathbb{R}$ a continuous function, such that $\overline{c}_f=0$. Then the equation \eqref{eq:equation_continuous} has a viscosity solution $u$ which is Lipshitz continuous, with Lipshitz constant depending on $\Omega$ and the norm $\|f\|$.
\end{theorem}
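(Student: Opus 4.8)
The plan is to combine the discrete existence theory already developed with a stability-under-limits argument for viscosity solutions. Concretely, I would first invoke Theorem~\ref{thm:convergence_of_shifting_constants} and the hypothesis $\overline{c}_f=0$: since $\overline{c}_f=\lim_{\epsilon\downarrow 0}c_f(\epsilon)=0$, I pick a sequence $\epsilon_n\downarrow 0$ and consider the equation $-\Delta_\infty^{\epsilon_n}u_n=f-c_f(\epsilon_n)$ on the Euclidean $\epsilon_n$-adjacency graph associated to $\overline{\Omega}$. By Theorem~\ref{thm:solutions_on_general_graphs} such solutions exist (since by construction the relevant long-term advantage vanishes), and after normalizing so that $0$ lies in the range of each $u_n$, Theorem~\ref{thm:general_continuous_solutions} supplies a subsequence converging uniformly to a Lipschitz continuous limit $u$, with Lipschitz constant bounded by a universal multiple of $\diam(\Omega)\|f\|$. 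This already delivers the claimed regularity of $u$; the remaining work is to show $u$ is a viscosity solution of \eqref{eq:equation_continuous} in the sense of Definition~\ref{def:viscosity_solutions}.

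Next I would carry out the viscosity argument. By symmetry (replacing $u$, $f$, and the test function by their negatives) it suffices to prove $u$ is a subsolution. So I would fix $\varphi\in C^\infty(\overline{\Omega})$ and a point $x_0\in\overline{\Omega}$ at which $u-\varphi$ has a strict local maximum, and assume toward a contradiction that both alternatives a) and b) in Definition~\ref{def:viscosity_solutions} fail, i.e. $-\Delta_\infty^+\varphi(x_0)>f(x_0)$ and, if $x_0\in\partial\Omega$, also $\nabla_{\nu(x_0)}\varphi(x_0)>0$. Because $u_n\to u$ uniformly and the maximum of $u-\varphi$ is strict, I can locate points $x_n\to x_0$ at which $u_n-\varphi$ attains a local maximum on the $\epsilon_n$-ball neighborhood. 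At such a point the discrete maximum principle gives $\varphi(x_n)-\frac12\bigl(\min_{B(x_n,\epsilon_n)}\varphi+\max_{B(x_n,\epsilon_n)}\varphi\bigr)\ge \epsilon_n^2(f(x_n)-c_f(\epsilon_n))/2$ (up to the factor conventions of Remark~\ref{rem:two_laplacians}), so that $-\Delta_\infty^{\epsilon_n}\varphi(x_n)\ge f(x_n)-c_f(\epsilon_n)$.

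The crux is then a Taylor-expansion analysis of the quantity $-\Delta_\infty^{\epsilon_n}\varphi(x_n)$ as $n\to\infty$, split according to whether $x_0$ is interior or on the boundary. If $x_0\in\Omega$, the balls $B(x_n,\epsilon_n)$ are eventually genuine Euclidean balls, and the standard expansion of $\max_{B}\varphi+\min_{B}\varphi-2\varphi$ shows $\Delta_\infty^{\epsilon_n}\varphi(x_n)\to\Delta_\infty^+\varphi(x_0)$ (using the $\Delta_\infty^+$ definition to handle the degenerate case $\nabla\varphi(x_0)=0$), yielding $-\Delta_\infty^+\varphi(x_0)\ge f(x_0)$, contradicting a). If $x_0\in\partial\Omega$, the intrinsic ball $B(x_n,\epsilon_n)$ is a one-sided region truncated by $\partial\Omega$; here the dominant effect of the assumed strict inequality $\nabla_{\nu(x_0)}\varphi(x_0)>0$ is that the first-order term $\pm\epsilon_n\,\nabla\varphi\cdot(\text{unit direction})$ does not cancel between the max and min, because the admissible directions lie in a half-space. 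I expect this boundary case to be the main obstacle: one must use the $C^1$ regularity of $\partial\Omega$ (via the local graph parametrization recorded in the excerpt) to show that the constrained extremizing directions converge to directions in the tangent half-space, and that the surviving first-order contribution forces $-\Delta_\infty^{\epsilon_n}\varphi(x_n)\to+\infty$, again contradicting the assumed inequality and thereby establishing alternative b). Carrying out this one-sided expansion carefully, controlling the curvature error terms uniformly in $n$, is the delicate part; the interior case and the overall limiting framework are comparatively routine given the cited theorems.
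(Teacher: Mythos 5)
Your overall framework---solve $-\Delta_\infty^{\epsilon_n}u_n=f-c_f(\epsilon_n)$ on the $\epsilon_n$-adjacency graphs, extract a Lipschitz limit via Theorems \ref{thm:convergence_of_shifting_constants}, \ref{thm:solutions_on_general_graphs} and \ref{thm:general_continuous_solutions}, and pass to the limit in the viscosity sense---is the same as the paper's, and your interior case is exactly what the paper delegates to \cite{AS}. But the boundary case, which you correctly identify as the crux, has a genuine gap. First, a sign slip: at a local maximum $x_n$ of $u_n-\varphi$ the comparison runs the other way, namely $-\Delta_\infty^{\epsilon_n}\varphi(x_n)\leq-\Delta_\infty^{\epsilon_n}u_n(x_n)=f(x_n)-c_f(\epsilon_n)$; with your ``$\geq$'' the divergence you aim for would contradict nothing. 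More seriously, the claimed divergence $-\Delta_\infty^{\epsilon_n}\varphi(x_n)\to+\infty$ is not available: the points $x_n$ merely converge to $x_0\in\partial\Omega$, and nothing forces $\dist(x_n,\partial\Omega)$ to be $O(\epsilon_n)$. If $\dist(x_n,\partial\Omega)\geq\epsilon_n$, the intrinsic ball $B(x_n,\epsilon_n)$ is an untruncated Euclidean ball, the first-order terms cancel, and $-\Delta_\infty^{\epsilon_n}\varphi(x_n)$ stays bounded, so no contradiction arises. In effect you are trying to establish the \emph{strong} subsolution property b') of Remark \ref{rem:strong_viscosity_solutions}, which is more than Definition \ref{def:viscosity_solutions} requires and which the paper only obtains on convex domains.

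The paper's resolution (Theorem \ref{thm: convergence for different shiftings}) is different and avoids any case analysis on $\dist(x_n,\partial\Omega)/\epsilon_n$: when $\nabla_{\nu}\varphi(x_0)>0$ it proves that alternative a) holds at $x_0$. The point is that the approximate minimizing direction $-\nabla\varphi(x_k)/|\nabla\varphi(x_k)|$ then points into $\overline{\Omega}$; Lemma \ref{lemma: cones} (using the $C^1$ boundary) together with Lemma \ref{lemma:comparison_discrete_to_infinite} (ii) shows that the minimizer of $\varphi$ over the full Euclidean ball $\mathbf{B}(x_k,\epsilon_k)$ lies in a cone contained in $\overline{\Omega}$, hence in the intrinsic ball, so $\min_{B(x_k,\epsilon_k)}\varphi=\min_{\mathbf{B}(x_k,\epsilon_k)}\varphi$ while $\max_{B(x_k,\epsilon_k)}\varphi\leq\max_{\mathbf{B}(x_k,\epsilon_k)}\varphi$. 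This yields $-\widetilde{\Delta}_\infty^{\epsilon_k}\varphi(x_k)\leq-\Delta_\infty^{\epsilon_k}\varphi(x_k)$, and combining with the Armstrong--Smart consistency estimate and the local-maximum inequality gives $-\Delta_\infty\varphi(x_0)\leq f(x_0)-\overline{c}_f$ in the limit. To repair your argument you would need to replace the divergence claim by this comparison (or carry out a uniform case analysis in $\dist(x_n,\partial\Omega)/\epsilon_n$ showing that in every regime either a) is forced or boundedness is violated); this is precisely the content you left as ``the delicate part.''
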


It is natural to expect the existence of viscosity solutions to \eqref{eq:equation_continuous} only for one shift of the function $f$. This is proven in the following theorem for convex domains $\Omega$.

\begin{theorem}\label{thm:uniqueness_of_shifting_constants}
Let  $\Omega$ be a convex domain of finite diameter with  $C^1$ boundary $\partial \Omega$ and $f \colon \overline{\Omega} \to \mathbb{R}$ a continuous function. Then the equation \eqref{eq:equation_continuous} has a viscosity solution $u$ if and only if $\overline{c}_f=0$.
\end{theorem}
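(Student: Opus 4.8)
The plan is to treat the equivalence as two implications. The implication $\overline{c}_f=0 \Rightarrow$ existence is exactly Theorem \ref{thm:existence_of_continuous_solutions}, so all the work is in the converse. For the converse I would first record the two structural facts that make a clean reduction possible: that shifting the source by a constant shifts the long term advantage by the same amount, $\overline{c}_{f+a}=\overline{c}_f+a$ (immediate from the corresponding identity for each $c_f(\epsilon)$ together with Theorem \ref{thm:convergence_of_shifting_constants}), and that by Theorem \ref{thm:existence_of_continuous_solutions} applied to $f-\overline{c}_f$ the Neumann problem $-\Delta_\infty w=f-\overline{c}_f$ always admits a viscosity solution. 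Granting these, the theorem reduces to the following uniqueness statement: the Neumann problem $-\Delta_\infty u=f-c$ has a viscosity solution for at most one value of $c$. Indeed, if a solution of \eqref{eq:equation_continuous} exists then $c=0$ is solvable, while $c=\overline{c}_f$ is solvable by the above, and uniqueness then forces $\overline{c}_f=0$.

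To prove uniqueness of the solvable shift I would argue by contradiction. Suppose $u$ solves $-\Delta_\infty u=f-c_1$ and $v$ solves $-\Delta_\infty v=f-c_2$ with $c_1<c_2$. Because $\Omega$ is convex, Remark \ref{rem:strong_viscosity_solutions} lets me upgrade both to \emph{strong} viscosity solutions, so that on the boundary the competing requirements collapse to the clean normal-derivative inequalities $\nabla_\nu \varphi \le 0$ for the subsolution test functions and $\nabla_\nu \psi \ge 0$ for the supersolution test functions, with no interference from the interior condition. After normalizing $u$ by an additive constant I may assume $\max_{\overline{\Omega}}(u-v)=0$, attained at some $x_0$, so that $v$ touches $u$ from above at $x_0$; the goal is to show that such touching is impossible.

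The analysis splits according to whether $x_0$ lies in $\Omega$ or on $\partial\Omega$. For a boundary touching point I would run a doubling-of-variables scheme with an added normal penalization: convexity guarantees that the relevant inner products $\langle x-y,\nu\rangle$ have a favorable sign, and the strong boundary inequalities then drive the maximum of the doubled functional into the interior. Thus everything comes down to the interior case, namely that a subsolution of $-\Delta_\infty u=f-c_1$ cannot be touched from above by a supersolution of $-\Delta_\infty v=f-c_2$ when $c_2-c_1>0$. Here lies the main obstacle. A naive doubling of variables is \emph{not} enough: comparing only the second-order terms along the approximate gradient direction yields nothing stronger than $f(x_0)-c_2\le f(x_0)-c_1$, which is consistent with $c_1<c_2$ and produces no contradiction. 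What is genuinely needed is the comparison principle for the inhomogeneous infinity Laplacian with strictly separated sources, as established by Armstrong and Smart \cite{AS} (and by Lu and Wang), whose proof exploits the homogeneity of $\Delta_\infty$ through cone-type comparison or non-quadratic test functions such as $|x-y|^{4/3}$; it is exactly the strict gap $c_2-c_1>0$ that such a principle converts into the exclusion of interior touching.

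Assembling these pieces gives the contradiction, hence the uniqueness of the solvable shift and the converse implication. I expect the real difficulty to be concentrated entirely in the interior strict comparison for the degenerate operator $\Delta_\infty$; the reduction to a single solvable constant and the boundary bookkeeping are comparatively routine once the strong viscosity formulation is in force. This is precisely why the convexity hypothesis enters the statement: without it one cannot invoke Remark \ref{rem:strong_viscosity_solutions} to convert the delicate Neumann boundary condition of Definition \ref{def:viscosity_solutions} into the clean normal-derivative inequalities that the comparison argument requires.
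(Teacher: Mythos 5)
Your forward implication is the same as the paper's (it is just Theorem \ref{thm:existence_of_continuous_solutions}), and your reduction of the converse to ``the Neumann problem $-\Delta_\infty u=f-c$ is solvable for at most one $c$'' is logically sound. But the proof you propose for that uniqueness statement is a genuinely different route from the paper's, and it has two serious problems. First, the sub/supersolution pairing is inverted. You normalize $\max_{\overline{\Omega}}(u-v)=0$ and try to exclude the configuration ``a subsolution of $-\Delta_\infty u=f-c_1$ touched from above by a supersolution of $-\Delta_\infty v=f-c_2$'' with $c_1<c_2$. That configuration cannot be excluded: take any solution $u$ of $-\Delta_\infty u=f-c_1$ and set $v=u$; since $f-c_1\geq f-c_2$, this $v$ is a supersolution of $-\Delta_\infty v=f-c_2$ and it touches $u$ from above. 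The comparison principle with strictly separated sources lives at a maximum of $v-u$ (subsolution of the \emph{smaller} right-hand side minus supersolution of the \emph{larger} one), i.e.\ at the \emph{minimum} of $u-v$; there even the naive second-order computation at a non-degenerate touching point gives $f(x_0)-c_2\geq f(x_0)-c_1$, hence a contradiction. The fact that your computation came out ``consistent with $c_1<c_2$'' was a sign that the pairing was wrong, not that a stronger interior principle was needed.

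Second, even after fixing the pairing, the boundary case is the actual crux for a pure Neumann problem and cannot be dispatched in one sentence. The references you invoke (Armstrong--Smart, Lu--Wang) prove comparison with separated sources for the \emph{Dirichlet} problem: they push the maximum of the difference to $\partial\Omega$, and with no Dirichlet part you must then rule out a boundary maximum using the Neumann condition. A doubling-of-variables argument with normal penalization for the degenerate, discontinuous operator $\Delta_\infty$ is a substantial piece of analysis that is not available off the shelf, so calling the boundary bookkeeping ``comparatively routine'' misplaces the difficulty. The paper avoids all of this by staying discrete: it imports Theorem 2.2 of \cite{ASS} (valid on convex domains, which is where convexity enters the paper's proof), converting the continuous viscosity subsolution $u$ into a discrete subsolution $-\Delta_\infty^\epsilon \overline{u}^\epsilon \leq \overline{f}^{2\epsilon}\leq f+\osc(f,2\epsilon)$, and then applies the game-theoretic Lemma \ref{lemma:starting_from_the_subsolution} to get $c_f(\epsilon)\geq -\osc(f,2\epsilon)$; the symmetric estimate and $\epsilon\downarrow 0$ give $\overline{c}_f=0$. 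If you want a purely continuous argument you would need to actually prove the Neumann comparison principle for $\Delta_\infty$; as written, the proposal does not establish the converse.
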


\begin{remark}\label{rem:shifting constant scaling}
Directly from Theorem \ref{thm:first_result} one can deduce that $c_{\lambda f}= \lambda c_f$ and $c_{f+\lambda} = c_f +\lambda$ for any $\lambda \in \mathbb{R}$. For compact length spaces, after taking an appropriate limit, we obtain the same properties for $\overline{c}_f$. Thus Theorem \ref{thm:solutions_on_general_graphs} tells us that, under its  assumptions, for any function $g$ on the vertex set (continuous in the case of adjacency graphs), there is a unique constant $c$, such that 
 equation \eqref{eq:equation_discrete} can be solved when $f =g -c$. Theorems \ref{thm:existence_of_continuous_solutions} and \ref{thm:uniqueness_of_shifting_constants} tell us that any function $g \in C(\overline{\Omega})$ can be shifted to obtain a function $f \in C(\overline{\Omega})$ for which \eqref{eq:equation_continuous} can be solved, and that this shift  is unique when $\Omega$ is convex.
\end{remark}

\begin{remark}\label{eqm:uniqueness}
In the case of finite graphs and for a fixed $f$ (such that $c_f =0$), the solutions to the equation \eqref{eq:equation_discrete} are not necessarily unique (even in the case of finite graphs with self loops). A counterexample and a discussion about the continuous case is given in Section \ref{section: uniqueness discussion}.
\end{remark}

\section{The discrete case}\label{section: the discrete case}

Since the terminal payoff can be understood as the  value of the game of horizon $0$, we will not explicitly mention the terminal payoff when it is clear from the context.

\begin{lemma}\label{lemma:comparison}
Let $G=(V,E)$ be a connected graph of finite diameter and let $f$, $g$, $u_0$ and $v_0$ be bounded functions on $V$.
\begin{itemize}
\item[(i)] Let $(u_n)$ and $(v_n)$ be the sequences of values of games played with the running payoff $f$. If $u_0 \leq v_0$, then $u_n \leq v_n$ for all $n > 0$. Furthermore, if for some $c \in \mathbb{R}$ we have $v_0=u_0+c$, then $v_n=u_n+c$ for all $n>0$.
\item[(ii)] Let $(u_n^1)$ and $(u_n^2)$ be sequences of values of games played with the terminal payoffs $u_0^1=u_0^2=u_0$ and the running payoffs $f$ and $g$ respectively. If $f \leq g$ then $u_n^1 \leq u_n^2$, for all $n$. Furthermore if for some $c \in \mathbb{R}$ we have $g=f+c$, then $u_n^2 = u_n^1 + nc$ for all $n>0$.
\end{itemize}
\end{lemma}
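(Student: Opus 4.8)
The plan is to prove all four assertions by induction on $n$, using the dynamic programming recursion \eqref{eq:recursion}. Since the lemma is stated for an arbitrary connected graph of finite diameter with merely bounded payoffs, I will work with the inf/sup form of the recursion, namely $u_{n+1}(x) = \tfrac12\bigl(\inf_{y\sim x} u_n(y) + \sup_{y\sim x} u_n(y)\bigr) + f(x)$, which coincides with \eqref{eq:recursion} whenever the extrema are attained (finite-degree graphs, or adjacency graphs with continuous data). The entire argument rests on two elementary properties of $\inf$ and $\sup$ over a fixed neighborhood $N(x)=\{y : y \sim x\}$: monotonicity, i.e. $\phi \leq \psi$ pointwise implies $\inf_{N(x)}\phi \leq \inf_{N(x)}\psi$ and $\sup_{N(x)}\phi \leq \sup_{N(x)}\psi$; and the constant-shift identities $\inf_{N(x)}(\phi + c) = c + \inf_{N(x)}\phi$ and likewise for $\sup$.

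For the first assertion of (i), the base case is the hypothesis $u_0 \leq v_0$. Assuming $u_n \leq v_n$, monotonicity of $\inf$ and $\sup$ gives $\inf_{N(x)} u_n \leq \inf_{N(x)} v_n$ and $\sup_{N(x)} u_n \leq \sup_{N(x)} v_n$; adding these, halving, and adding the common term $f(x)$ yields $u_{n+1}(x) \leq v_{n+1}(x)$. The first assertion of (ii) is the same induction, now with the base case $u_0^1 = u_0^2$ and with the common-term step replaced by the inequality $f(x) \leq g(x)$ between the two running payoffs; the inductive hypothesis $u_n^1 \leq u_n^2$ again propagates through both extrema, and $f(x)\leq g(x)$ closes the step.

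The two constant-shift (``Furthermore'') claims use the shift identities in place of monotonicity. For (i), if $v_n = u_n + c$ then $\inf_{N(x)} v_n = c + \inf_{N(x)} u_n$ and $\sup_{N(x)} v_n = c + \sup_{N(x)} u_n$, so the recursion gives $v_{n+1}(x) = c + u_{n+1}(x)$; the base case $v_0 = u_0 + c$ starts the induction. For (ii) with $g = f + c$, the claim $u_n^2 = u_n^1 + nc$ follows by noting that the inductive hypothesis contributes a shift of $nc$ through the extrema while the running payoff contributes an additional $c$ via $g(x) = f(x) + c$, so that $u_{n+1}^2(x) = \tfrac12(\inf_{N(x)} u_n^1 + \sup_{N(x)} u_n^1) + nc + f(x) + c = u_{n+1}^1(x) + (n+1)c$; the base case is $u_0^2 = u_0^1$, consistent with $0\cdot c = 0$. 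There is no substantive obstacle: the only points requiring care are the use of the inf/sup recursion rather than \eqref{eq:recursion} in the fully general setting (so that the argument does not tacitly assume optimal neighbors exist), and the bookkeeping in (ii), where the shift must accumulate to exactly $nc$ because the running payoff is collected once at each of the $n$ steps of a horizon-$n$ game. One could alternatively bypass the induction and read both monotonicity and the exact shifts directly off the game-theoretic definition via \eqref{eq:game_value_step}: raising the terminal or the running payoff raises every realized payoff $F_n(\mathcal{S}_{\text{I}},\mathcal{S}_{\text{II}})$ pointwise, a constant shift of $f$ by $c$ raises it by exactly $nc$ along every play, and taking $\sup_{\mathcal{S}_{\text{I}}}\inf_{\mathcal{S}_{\text{II}}}$ preserves these relations.
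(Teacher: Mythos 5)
Your proof is correct and follows exactly the route the paper takes: the paper's entire proof is the single sentence that all statements follow by induction on $n$ from the recursion \eqref{eq:recursion}, and you have simply supplied the (routine) details, including the sensible precaution of using the inf/sup form of the recursion so that no attainment of extrema is assumed.
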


\begin{proof}
All statements are easy to verify by induction on $n$ using relation \eqref{eq:recursion}.
\end{proof}

\begin{lemma}\label{lemma:boundedness_of_differences}
For a connected graph $G=(V,E)$ of finite diameter and bounded functions $f,u_0 \colon V \to \mathbb{R}$, let $(u_n)_n$ be the sequence of values of games played on $G$  with running payoff $f$. 
Then for all $n \geq 0$ we have
\[
\max u_n - \min u_n \leq (\max u_0 - \min u_0) + \diam(G)^2 (\max f - \min f). 
\]
\end{lemma}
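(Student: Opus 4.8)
The plan is to prove the stronger pairwise estimate $u_n(x)-u_n(y)\le (\max u_0-\min u_0)+(\max f-\min f)\,\dist(x,y)^2$ for all vertices $x,y$ and all $n$; since $\dist(x,y)\le\diam(G)$ this immediately yields the claim (take $x,y$ to be maximizers and minimizers of $u_n$). Writing $R=\max f-\min f$, I would introduce the ``doubled'' quantity
\[
\Psi_n=\max_{x,y\in V}\big[u_n(x)-u_n(y)-R\,\dist(x,y)^2\big],
\]
and aim to show by induction on $n$ that $\Psi_n$ is non-increasing, so that $\Psi_n\le\Psi_0\le\max u_0-\min u_0$, the base case holding because the subtracted term only helps. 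The target estimate is then exactly $\Psi_n\le\max u_0-\min u_0$.

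For the inductive step I would fix a pair $(x,y)$ attaining $\Psi_{n+1}$; the case $x=y$ is immediate since $\Psi_n\ge 0$, so assume $k=\dist(x,y)\ge 1$. Expanding both $u_{n+1}(x)$ and $u_{n+1}(y)$ through the recursion \eqref{eq:recursion} gives
\[
\Psi_{n+1}=\tfrac12\big[\max_{\sim x}u_n-\max_{\sim y}u_n\big]+\tfrac12\big[\min_{\sim x}u_n-\min_{\sim y}u_n\big]+\big(f(x)-f(y)\big)-R\,k^2 ,
\]
where $f(x)-f(y)\le R$. The heart of the matter is the comparison function $\dist(\cdot,\cdot)^2$: the geodesic from $x$ to $y$ lets me pair the extremal neighbor $\bar x$ realizing $\max_{\sim x}u_n$ with a neighbor $q$ of $y$ one step toward $\bar x$, and likewise pair the extremal neighbor $\underline y$ realizing $\min_{\sim y}u_n$ with a neighbor $q'$ of $x$ one step toward $\underline y$. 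Applying the induction hypothesis to $(\bar x,q)$ and $(q',\underline y)$ and using that a step along a geodesic drops the squared distance from $k^2$ to $(k-1)^2$, the quadratic ``gains'' enough to conclude $\Psi_{n+1}\le\Psi_n$; concretely the generic bound reduces to checking $R(k-1)^2-Rk^2+R=-2R(k-1)\le 0$ for $k\ge 1$. This is the discrete analogue of $\Delta_\infty(\tfrac12\dist^2)=1$, the source of the $\diam(G)^2$ factor.

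I expect the main obstacle to be precisely this neighbor pairing when the extremal neighbors point the ``wrong way'': if $\bar x$ is strictly farther from $y$ than $x$ is (or $\underline y$ farther from $x$), the best available $q$ only satisfies $\dist(\bar x,q)\le k$ rather than $k-1$, and the crude estimate fails. Resolving this is where the game structure must enter, since the two bracketed terms arise from the \emph{average} of the min and max pulls. I would argue, via the two-token ``pull-toward-the-other-token'' coupling of two plays of the game (one started at $x$, one at $y$, driven by shared coin flips and mirroring the adversary's moves), that at a maximizing pair the simultaneously adversarial configurations are suboptimal, so that on average the squared distance between the tokens contracts by at least the amount needed to absorb the $f(x)-f(y)\le R$ term. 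Equivalently, one shows the coupled squared distance is a supermartingale whose downward drift stays bounded away from zero until the tokens coincide, and the $\diam(G)^2$ then appears as a gambler's-ruin / expected-coalescence estimate. Once this one-step contraction is in place, the induction and hence the lemma follow.
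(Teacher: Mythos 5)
Your headline claim --- the pairwise estimate $u_n(x)-u_n(y)\le(\max u_0-\min u_0)+(\max f-\min f)\,\dist(x,y)^2$ --- is false, so the induction on $\Psi_n$ cannot close. Take $V=\{0,1,2\}$ to be the path with a loop at each vertex (loops are squarely within the lemma's scope: it is applied to finite graphs with loops and to $\epsilon$-adjacency graphs, where every vertex neighbors itself), $f=(1,0,0)$, $u_0=0$, so $R=1$. The recursion \eqref{eq:recursion} gives $u_1=(1,0,0)$, $u_2=(3/2,1/2,0)$, $u_3=(2,3/4,1/4)$, whence $u_3(0)-u_3(1)=5/4>1=R\,\dist(0,1)^2$ and $\Psi_3\ge 1/4>0=\Psi_0$. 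The defect is structural. First, $k\mapsto k^2$ has the wrong convexity for your purpose: in the doubled variable the worst-case one-step transition puts one pairing at distance $k-1$ and the other at $k+1$, and $\tfrac12\bigl((k-1)^2+(k+1)^2\bigr)=k^2+1$, so the quadratic term \emph{gains} rather than loses before you even add $f(x)-f(y)\le R$; the comparison function compatible with this drift is the concave $\psi(k)=k\,(2\diam(G)-k)$, whose maximum $\diam(G)^2$ is the true source of the constant, not $k^2$. Second, and more seriously, even with a concave $\psi$ your pairing only guarantees $\dist(\bar x,q)\le k$ and $\dist(q',\underline y)\le k$ --- exactly the ``wrong way'' case you flag --- so the induction hypothesis yields only $\Psi_{n+1}\le\Psi_n+R$, with no contraction at all. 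The proposed two-token coupling does not repair this: the optimal strategies of the two games are based at different positions, nothing forces either player to play ``pull toward the other token,'' and hence nothing makes the coupled squared distance a supermartingale.

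The paper avoids doubling of variables altogether. After normalizing $u_0=0$ and $\min f=0$ via Lemma \ref{lemma:comparison}, the sequence $(u_n)$ is nonnegative and nondecreasing in $n$, and one compares $u_n(x)$ to $u_n(z)$ for a \emph{fixed} target $z$ by having Player II play the suboptimal ``pull toward $z$'' strategy until the hitting time $T$ of $z$, and optimally thereafter. Optional stopping applied to the submartingale $Y_t=(\diam(G)-\dist(X_t,z))^2-t$ gives $\mathbb{E}(T)\le\diam(G)^2$, and suboptimality of the strategy yields $u_n(x)\le u_n(z)+\mathbb{E}(T)\max f$, which is the lemma. If you wish to salvage a comparison-function argument you would need both to replace $\dist^2$ by $\psi$ above and to find a substitute for the failed neighbor pairing; the one-sided strategy argument is precisely the paper's substitute for the latter.
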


\begin{proof}
Consider the sequence of game values $(v_n)$  played with the running payoff $f$ and zero terminal payoff. From part (i) of Lemma \ref{lemma:comparison} we get
$v_n + \min u_0 \leq u_n \leq v_n + \max u_0$.
This implies that 
\[
\max u_n - \min u_n \leq (\max v_n - \min v_n) + (\max u_0 - \min u_0).
\] 
From this it's clear that it is enough to prove the claim when $u_0=0$. 
Furthermore, by part (ii) of Lemma \ref{lemma:comparison} it is enough to prove the claim for an arbitrary shift of the payoff function $f$, and therefore we assume that $\min f = 0$.
This implies that
$u_n \geq 0$, for  $n \geq 0$. Now, for a fixed $n$, by Lemma \ref{lemma:comparison} (i), playing the game of horizon $n-k$ with the running payoff $f$ and the terminal payoffs $u_k$ and $0$, gives the game values $u_{n}$ and $u_{n-k}$ respectively, and
\begin{equation}\label{eq: weak tightness on fixed step 1}
u_{n-k} \leq  u_n.
                            \end{equation}

Fix a vertex $z \in V$.
For a vertex $y\in V$ pick a neighbor $z(y) \in V$ of $y$ so that $\dist(z(y),z)= \dist(y,z)-1$ (if $y$ and $z$ are neighbors then clearly $z(y)=z$).
Let $\mathcal{S}_{\text{II},k}^0$ be the optimal strategy for Player II in the game of horizon $k$. For any Player I strategy $\mathcal{S}_{\text{I},k}$  for a game of horizon $k$, we have  $F_k(\mathcal{S}_{\text{I},k},\mathcal{S}_{\text{II},k}^0) \leq u_k$.
Now define the ``pull towards $z$'' strategy $\mathcal{S}_{\text{II}}$ for a game of length $n$ as follows. At any step of the game if the token is at the vertex $y \neq z$ and if $z$ is not among the past positions of the token, then strategy $\mathcal{S}_{\text{II}}$ takes the token to the vertex $z(y)$. If $T$ is the first time at which the token is at the vertex $z$, at this point Player II starts playing using the strategy $\mathcal{S}_{\text{II},n-T}^0$.
If $X_t$ is the position of the token at time $t$, then it can be easily checked that for $Y_t=(\diam (G) - \dist(X_t,z))^2 -t$, the process $Y_{t\wedge T}$ is a submartingale, with uniformly bounded differences. Moreover the stopping time $T$ has a finite expectation since it is bounded from above by the first time that Player II has won $\diam(G)$ consecutive coin tosses
(partition coin tosses into consecutive blocks of length $\diam (G)$ and notice that the order of the first block in which Player II wins all the coin tosses has exponential distribution with mean  $2^{\diam (G)}$). Therefore applying the optional stopping theorem we get $\mathbb{E}(Y_T) \geq \mathbb{E}(Y_0)$,  hence $\mathbb{E}(T) \leq \diam(G)^2$.
Now consider the game in which Player I plays optimally and Player II plays according to the above defined strategy $\mathcal{S}_{\text{II}}$.
Since each move in the optimal strategies depends only on the current position of the token, by the independence of the coin tosses, we have that conditioned on $T=k$, the expected payoff in steps $k+1$ to $n$ is bounded from above by $u_{n-k}(z)$.
Clearly the total  payoff in the first $k$ steps is bounded from above by $k \max f$. For $x \neq z$ the strategy $\mathcal{S}_{\text{II}}^0$ is suboptimal and 
\[
u_n(x) \leq \sum_{k=1}^n \mathbb{P}(T=k)(k \max f + u_{n-k}(z)) + \mathbb{P}(T\geq n)n \max f.
\] 
Since $f$ is a non-negative function, so is $u_{n}$ for any $n$.
Using this with \eqref{eq: weak tightness on fixed step 1} we get
\begin{multline*}
u_n(x) \leq \sum_{k=1}^n \mathbb{P}(T=k) k\max f + u_n(z) +  \mathbb{P}(T\geq n)n \max f \\ \leq u_n(z) + \mathbb{E}(T)\max f.
\end{multline*}
Since $x$ and $z$ are arbitrary and $\mathbb{E}(T) \leq \diam (G)^2$ for all $x$  and $z$, the claim follows.
\end{proof}

\begin{proof}[Proof of Theorem \ref{thm:first_result}]
As in the proof of Lemma \ref{lemma:boundedness_of_differences}, we can assume that $u_0=0$.
Denote $M_k=\max u_k$ and $m_k=\min u_k$. By part (i) of Lemma \ref{lemma:comparison}, playing the game with the constant terminal payoff $M_k$ gives the sequence of game values $(u_n+M_k)_n$. 
Comparing this to the game with the terminal payoff $u_k$ we obtain $u_{n+k}(x) \leq u_n(x) + M_k$.
Taking maximum over all vertices $x$ leads to the subaditivity of the sequence $(M_n)$, that is $M_{n+k} \leq M_n + M_k$. In the same way we can prove that the sequence $(m_n)$ is superaditive. By Lemma \ref{lemma:boundedness_of_differences} we can find a constant $C$ so that $M_n-m_n \leq C$ for any $n$, and thus we can define
\begin{equation}\label{eq:definition_of_shifting_constant_discrete_case}
c_f := \lim_n\frac{M_n}{n} = \inf_n \frac{M_n}{n} = \lim_n \frac{m_n}{n} = \sup_n \frac{m_n}{n}.
\end{equation}
Then, for any $n \geq 0$ we have
\[
nc_f \leq M_n \leq m_n + C \leq nc_f + C,
\]
and therefore, for any $x \in V$
\[
|u_n(x) -nc_f| \leq \max\{|M_n-nc_f|, |m_n-nc_f|\} \leq C.
\]
\end{proof}

For an arbitrary graph $G=(V,E)$ and a function $f$ on $V$, define the (non-linear) operator $A_f$ acting on the space of functions on $V$, so that for each $x \in V$
\[
A_f u(x) = \frac{1}{2}\Big(\max_{y \sim x} u(y) + \min_{y \sim x} u(y) \Big) + f(x).
\]

\begin{lemma}\label{lemma:lipschitz_properties}
Assume $G=(V,E)$ is either 
 an adjacency graph or a finite graph. Let $f$, $u$ and $v$ be functions on $V$, which are also assumed to be continuous if $G$ is an adjacency graph.
Then we have
\begin{equation}
  \label{eq:lipschitz_properties_1}
\min(v-u) \leq \min(A_fv-A_fu) \leq \max(A_fv-A_fu) \leq \max(v-u),
\end{equation}
and 
\begin{equation}
  \label{eq:eq:lipshitz_properties_2}
\|A_fv-A_fu\| \leq \|v-u\|.
\end{equation}
Moreover for $x \in V$ we have $A_fv(x) - A_fu(x) = \max(v-u)$ if and only if for any two neighbors $y_1$ and $y_2$ of $x$ such that $u(y_1)=\min_{y \sim x}u(y)$,  and $v(y_2)=\max_{y \sim x}v(y)$, we also have $v(y_1)=\min_{y \sim x}v(y)$,  and $u(y_2)=\max_{y \sim x}u(y)$
 and $v(y_i) - u(y_i) = \max(v-u)$, for $i \in \{1,2\}$. 
\end{lemma}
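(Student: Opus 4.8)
The plan is to reduce everything to the elementary fact that the two operators $u \mapsto \max_{y\sim x} u(y)$ and $u \mapsto \min_{y\sim x} u(y)$ are order-preserving and displace each value by no more than the global oscillation of $v-u$. First I would observe that the running payoff is irrelevant here: since $f(x)$ enters $A_f u(x)$ and $A_f v(x)$ additively and identically, it cancels, leaving
\[
A_f v(x) - A_f u(x) = \tfrac12\Big(\max_{y\sim x} v(y) - \max_{y\sim x} u(y)\Big) + \tfrac12\Big(\min_{y\sim x} v(y) - \min_{y\sim x} u(y)\Big).
\]
I will write $M = \max(v-u)$ and $m = \min(v-u)$ throughout.

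Second, I would prove the pointwise two-sided bound $m \le \max_{y\sim x} v(y) - \max_{y\sim x} u(y) \le M$, together with the identical bound with $\max$ replaced by $\min$. For the upper bound, if a neighbor $y^\ast$ realizes $\max_{y\sim x} v$, then $\max_{y\sim x} v(y) = v(y^\ast) \le u(y^\ast) + M \le \max_{y\sim x} u(y) + M$; the lower bound and the two statements for $\min$ are entirely symmetric. Averaging the max-bound and the min-bound yields \eqref{eq:lipschitz_properties_1}. Inequality \eqref{eq:eq:lipshitz_properties_2} is then immediate: since $\|v-u\| = \max\{M,-m\}$ and $m \le A_f v(x) - A_f u(x) \le M$ for every $x$, we get $|A_f v(x)-A_f u(x)| \le \max\{M,-m\} = \|v-u\|$.

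Third, for the equality characterization I would exploit that, each of the two averaged differences being at most $M$, their average equals $M$ if and only if both equal $M$; that is, $A_f v(x) - A_f u(x) = M$ holds exactly when $\max_{y\sim x} v - \max_{y\sim x} u = M$ and $\min_{y\sim x} v - \min_{y\sim x} u = M$ simultaneously. I would then unwind each equality in the chain of estimates from the second step. For the max term, choosing $y_2$ with $v(y_2) = \max_{y\sim x} v$, equality in $v(y_2) \le u(y_2) + M \le \max_{y\sim x} u + M$ forces both $v(y_2) - u(y_2) = M$ and $u(y_2) = \max_{y\sim x} u$; conversely any such $y_2$ realizing the max of $v$ recovers the equality. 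The min term is handled symmetrically with $y_1$ realizing $\min_{y\sim x} u$: from $v(y_1) \ge \min_{y\sim x} v = \min_{y\sim x} u + M = u(y_1) + M$ and the universal bound $v(y_1)-u(y_1)\le M$, one gets $v(y_1)-u(y_1)=M$ and $v(y_1) = \min_{y\sim x} v$. Collecting these equivalences reproduces exactly the stated condition on $y_1$ and $y_2$.

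Throughout I would note that maximizing and minimizing neighbors exist: trivially for finite graphs, and for adjacency graphs because the relevant neighborhood is the compact ball $B(x,\epsilon)$ on which the continuous functions $u$ and $v$ attain their extrema, as already used in Remark \ref{rem:continuity}. The only mildly delicate point is the equality characterization, where one must track carefully that the extremizing neighbors for $u$ and for $v$ coincide and simultaneously realize the global gap $M$; the rest is a routine computation with no induction required.
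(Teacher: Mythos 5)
Your proposal is correct and follows essentially the same route as the paper: the pointwise bounds $\max_{y\sim x}v \le \max_{y\sim x}u + \max(v-u)$ and its min-counterpart, averaged to get \eqref{eq:lipschitz_properties_1}, with the equality case obtained by forcing equality in each link of that chain for the extremizing neighbors $y_1$ and $y_2$. The only (harmless) cosmetic difference is that you derive the lower bounds directly by symmetry of the argument rather than by substituting $-u,-v$, and you make the existence of extremizers explicit.
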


\begin{proof}
Fix a vertex $x \in V$ and note that $\max_{y \sim x}v(y) \leq \max_{y \sim x}u(y) + \max(v-u)$ and $\min_{y \sim x}v(y) \leq \min_{y \sim x}u(y) + \max(v-u)$. Adding these inequalities one  obtains $A_f v(x) \leq A_fu(x) + \max(v-u)$. The inequality $\min(v-u) \leq \min(A_fv-A_fu)$ now follows by replacing $u$ and $v$ by $-u$ and $-v$ respectively, and \eqref{eq:eq:lipshitz_properties_2} follows directly from \eqref{eq:lipschitz_properties_1}. It is clear that the equality $A_fv(x)-A_fu(x) = \max(v-u)$ holds if and only if both 
\begin{equation}\label{eq: equality in lipschitz 1}
\max_{y \sim x}v(y) = \max_{y \sim x}u(y) + \max(v-u)
\end{equation} 
and 
\begin{equation}\label{eq: equality in lipschitz 2}
\min_{y \sim x}v(y) = \min_{y \sim x}u(y) + \max(v-u)
\end{equation} 
hold. It is obvious that the conditions in the statement are sufficient for \eqref{eq: equality in lipschitz 1} and \eqref{eq: equality in lipschitz 2} to hold, and it is only left to be proven that these conditions are also necessary. To end this assume that both \eqref{eq: equality in lipschitz 1} and \eqref{eq: equality in lipschitz 2} hold and take $y_1$ and $y_2$ to be arbitrary neighbors of $x$ such that $u(y_1) = \min_{y \sim x}u(y)$ and $v(y_2) = \max_{y \sim x}v(y)$. Clearly we have
\[
\min_{y \sim x}v(y)  \leq v(y_1) \leq u(y_1) + \max(v-u) = \min_{y \sim x}u(y)+\max(v-u),
\]
and moreover all the inequalities in the above expression must be equalities. This implies both $v(y_1) = u(y_1) + \max(v-u)$ and $v(y_1) = \min_{y \sim x}v(y)$. The claim for $y_2$ can be checked similarly.
\end{proof}

The following proposition proves Theorem \ref{thm:graphs_with_loops} in the case of finite graphs with loops.
For a sequence of game values $(u_n)_n$ with the running payoff $f$, define $M_n^f(u_0) = \max_{x \in V}(u_n(x)-u_{n-1}(x))$ and $m_n^f(u_0) = \min_{x \in V}(u_n(x)-u_{n-1}(x))$.  

\begin{proposition}\label{lemma:subsequential_to_complete_convergence}
Under the assumptions of Theorem \ref{thm:graphs_with_loops}
the sequence of game values converges if it has a convergent subsequence.
\end{proposition}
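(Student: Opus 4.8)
The plan is to run the iteration through the operator $A_f$, so that $u_{n+1}=A_f u_n$, and to combine the nonexpansiveness of $A_f$ in the supremum norm with the sharp equality clause of Lemma~\ref{lemma:lipschitz_properties}. First I would record that the extreme one-step increments are monotone: applying \eqref{eq:lipschitz_properties_1} to the pair $u=u_{n-1}$, $v=u_n$ (whose images under $A_f$ are $u_n$ and $u_{n+1}$) gives
\[
m_n^f(u_0)\le m_{n+1}^f(u_0)\le M_{n+1}^f(u_0)\le M_n^f(u_0),
\]
so $(M_n^f)$ is non-increasing and $(m_n^f)$ is non-decreasing. Since $c_f=0$, Theorem~\ref{thm:first_result} makes $(u_n)$ bounded, hence the differences $u_n-u_{n-1}$ are bounded, and therefore $M_n^f\downarrow M$ and $m_n^f\uparrow m$ for some $m\le M$.

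Next I would pass to the convergent subsequence $u_{n_k}\to w$. As $A_f$ is nonexpansive it is continuous (by \eqref{eq:eq:lipshitz_properties_2}), so for each fixed $j\ge 0$ we have $u_{n_k+j}=A_f^{\,j}u_{n_k}\to A_f^{\,j}w=:g_j$, and each $g_j$ satisfies the same uniform bound as $(u_n)$. Letting $k\to\infty$ in the definitions of $M^f$ and $m^f$ along the indices $n_k+j$ yields, for every $j\ge1$,
\[
\max_x\bigl(g_j(x)-g_{j-1}(x)\bigr)=M,\qquad \min_x\bigl(g_j(x)-g_{j-1}(x)\bigr)=m .
\]
Thus along this limiting orbit the extreme increments are exactly $M$ and $m$ at every step. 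Tracking $\max_x g_j(x)$ and $\min_x g_j(x)$, which change by amounts lying in $[m,M]$ yet remain bounded, already forces $m\le 0\le M$.

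The heart of the matter is to improve this to $M=m=0$, i.e.\ to show that each increment $g_j-g_{j-1}$ is a constant function. Here I would use the equality clause of Lemma~\ref{lemma:lipschitz_properties}: the identity $\max(g_{j+1}-g_j)=\max(g_j-g_{j-1})$ is exactly the equality case of the bound $\max(A_f v-A_f u)\le\max(v-u)$ for the pair $(u,v)=(g_{j-1},g_j)$. Consequently, at any vertex where $g_{j+1}-g_j$ attains its maximum $M$, the neighbor minimizing $g_{j-1}$ and the neighbor maximizing $g_j$ must themselves belong to the set $\{g_j-g_{j-1}=M\}$. Iterating this relation downward in $j$ propagates the set on which the increment equals $M$ along edges of the graph; using connectivity together with the loop present at every vertex (and, for adjacency graphs, compactness of $V$ in place of finiteness), I would argue that this set must exhaust $V$, so that $g_j-g_{j-1}\equiv M$, and symmetrically $g_j-g_{j-1}\equiv m$. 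I expect this propagation to be the main obstacle, since it is precisely where the min--max structure and connectivity are needed to exclude a persistent oscillation of the iterates.

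Once the increment is constant we get $M=m=:\lambda$, whence $g_J=g_0+J\lambda$; boundedness of the orbit then forces $\lambda=0$, so $A_f w=g_1=g_0=w$ and $w$ is a fixed point of $A_f$. Finally $\|u_n-w\|=\|A_f u_{n-1}-A_f w\|\le\|u_{n-1}-w\|$ is non-increasing, while $\|u_{n_k}-w\|\to0$ along the subsequence; a non-increasing sequence with a subsequence tending to $0$ tends to $0$, so $u_n\to w$ uniformly, as claimed.
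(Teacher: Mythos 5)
Your setup is sound and, up to the final step, follows the paper's own proof: the monotonicity of $M_n^f$ and $m_n^f$, the passage to the limiting orbit $g_j=A_f^{\,j}w$ with $\max(g_j-g_{j-1})=M$ and $\min(g_j-g_{j-1})=m$ for all $j$ (your limit along the indices $n_k+j$ is a clean substitute for the paper's continuity-of-$v\mapsto M_k^f(v)$ argument), and the closing observation that a fixed point $w$ of $A_f$ forces $\|u_n-w\|$ to be non-increasing and hence to tend to $0$. The gap is exactly where you say you expect it: the claim that $S_j=\{x: g_j(x)-g_{j-1}(x)=M\}$ exhausts $V$. The equality clause of Lemma \ref{lemma:lipschitz_properties} only tells you that, for $x\in S_{j+1}$, the \emph{particular} neighbors of $x$ extremizing $g_{j-1}$ and $g_j$ lie in $S_j$; it says nothing about the other neighbors of $x$, and it propagates \emph{downward} in $j$ from a possibly small set $S_{j+1}$ into $S_j$ rather than enlarging anything. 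Connectivity gives no purchase here, and $S_j=V$ is equivalent to $g_j\equiv g_{j-1}+M$, which together with boundedness is essentially the conclusion $M=0$ you are trying to reach; so as written the key step is unproved and the proposed route to it is circular.

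The paper avoids this entirely: it never shows $S_j=V$. Writing $(w_j)$ for the orbit started at $w$ and $V_j=\{x: w_{j+1}(x)=w_j(x)+M\}$, it sets $t_j=\min_{V_j}w_j$ and proves $t_j\geq t_{j-1}+M$. Indeed, for $x\in V_j$ the equality clause produces $y\sim x$ with $w_j(y)=\min_{z\sim x}w_j(z)$ and $y\in V_{j-1}$; the loop $x\sim x$ then gives $w_j(x)\geq w_j(y)=w_{j-1}(y)+M\geq t_{j-1}+M$, and taking the minimum over $x\in V_j$ yields the claim. If $M>0$ this makes $t_j\to\infty$, contradicting the boundedness of the orbit (which follows from $c_f=0$ via Theorem \ref{thm:first_result}); hence $M=0$, and symmetrically $m=0$. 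Note this uses only the loops and, in the adjacency case, compactness of the closed sets $V_j$ so that $t_j$ is attained --- connectivity plays no role. If you replace your ``exhaust $V$'' step by this tracking of $\min_{V_j}w_j$, your argument closes.
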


\begin{proof}
If $M_n^f(u_0)= -\delta < 0$ for some $n \geq 1$, we have $u_n \leq u_{n-1} - \delta$ and by applying part (i) of Lemma \ref{lemma:comparison} we obtain $u_m \leq u_{m-1} - \delta$, for any $m \geq n$. This is a contradiction with the assumption that $c_f=0$. 
Therefore $M_n^f(u_0) \geq 0$ and similarly $m_n^f(u_0) \leq 0$. By Lemma \ref{lemma:lipschitz_properties} the sequences $(M_n^f(u_0))_n$ and $(m_n^f(u_0))_n$ are bounded  and non-increasing and non-decreasing respectively and therefore they converge. 

Let $w$ be the limit of a subsequence of $(u_n)_n$. Assume for the moment that $M_1^f(w)=m_1^f(w)=0$, or equivalently $A_fw=w$. Now Lemma \ref{lemma:lipschitz_properties}   implies
\[
\|u_{n+1}-w\| = \|A_fu_n - A_fw\| \leq \|u_n-w\|.
\]
Therefore $\|u_{n}-w\|$ is decreasing in $n$ and, together with the fact that $0$ is its accumulation point, this yields $\lim_n \|u_{n}-w\| = 0$. Therefore it is enough to prove that $M_1^f(w)=m_1^f(w)=0$. The rest of the proof will be dedicated to showing $M_1^f(w)=0$ (the claim $m_1^f(w)=0$ following analogously).

First we prove that $(M_n^f(w))$ is a constant sequence. Assume this is not the case, that is for some $k$ we have $M_{k+1}^f(w) < M_k^f(w)$. For any $n \geq 1$ the mapping $v \mapsto M_n^f(v)$ is continuous and
therefore we can find a neighborhood $\mathcal{U}$ of $w$ ($\mathcal{U} \subset C(V,\mathbb{R})$ in the adjacency case)
 and $\delta>0$ such that $M_{k+1}^f(v) < M_k^f(v) - \delta$, for any $v \in \mathcal{U}$. Observing that $M_k^f(u_n) = M_{n+k}^f(u_0)$, and that $u_n \in \mathcal{U}$ for infinitely many positive integers $n$, we have  $M_{\ell+1}^f(u_0) < M_\ell^f(u_0) - \delta$ for infinitely many positive integers $\ell$. This is a contradiction with the fact that $(M_n^f(u_0))_n$ is a nonnegative decreasing sequence. 

Now let $M=M_n^f(w) \geq 0$ and denote by $(w_n)$ the sequence of game values with terminal and running payoffs $w_0=w$ and $f$ respectively. Define the compact sets $V_n=\{x \in V: w_{n+1}(x) = w_n(x)+M \}$ and $t_n = \min_{x \in V_n}w_n(x)$. Taking $x \in V_n$ we have $M=w_{n+1}(x) - w_n(x) = \max(w_n-w_{n-1})$ and thus we can apply Lemma \ref{lemma:lipschitz_properties} to find $y \sim x$ such that $w_n(y) = \min_{z \sim x}w_n(z)$ and $y \in V_{n-1}$. Because the graph $G$ satisfies $x \sim x$ for any vertex $x$, we obtain 
\begin{equation}\label{eq:use_of_loops}
w_n(x) \geq w_n(y) = w_{n-1}(y)+M \geq t_{n-1}+M,
\end{equation}
for any $x \in V_n$.
Taking the minimum over $x \in V_n$ yields $t_{n} \geq t_{n-1}+M$. For $M > 0$ this is a contradiction with the boundedness of the sequence $(w_n)$, which in turn follows from $c_f=0$. 
Thus $M=0$ which proves the statement.
\end{proof}

\begin{remark}\label{remark:use_of_loops}
Note that in the case of finite graphs, the first inequality in \eqref{eq:use_of_loops} is
 the only place where loops were used.
\end{remark}

The existence of accumulation points will follow from Lemma \ref{prop:equicontinuity}, which in turn will use the following lemma. Note that these two lemmas can replace the last paragraph in the proof of Lemma \ref{lemma:subsequential_to_complete_convergence}. However we will leave the proof of Lemma \ref{lemma:subsequential_to_complete_convergence} as it is, since it gives a shorter proof of Theorem \ref{thm:graphs_with_loops} for finite graphs with loops.

\begin{lemma}\label{lemma:convergence_of_differences}
Under the assumption of Theorem \ref{thm:graphs_with_loops} we have
\[
\lim_n (u_{n+1}-u_n)  = 0.
\]
\end{lemma}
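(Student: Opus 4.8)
The plan is to deduce the uniform statement from the behavior of the two monotone quantities $M_n^f(u_0)=\max(u_n-u_{n-1})$ and $m_n^f(u_0)=\min(u_n-u_{n-1})$ recorded in the proof of Proposition \ref{lemma:subsequential_to_complete_convergence}. Writing $D_n:=u_{n+1}-u_n$ and $P_n:=\max D_n=M_{n+1}^f(u_0)$, we have from that proof (nonnegativity together with monotonicity via \eqref{eq:lipschitz_properties_1}) that $P_n\downarrow P\ge 0$, and likewise $\min D_n\uparrow$ to some nonpositive limit. Since $\min D_n\le u_{n+1}(x)-u_n(x)\le P_n$ for every $x$, the claim $\lim_n(u_{n+1}-u_n)=0$ is equivalent to both limits being $0$. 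Replacing $(f,u_0)$ by $(-f,-u_0)$ turns $(u_n)$ into $(-u_n)$ and $P_n$ into $-\min D_n$, while preserving the hypothesis $c_f=0$; so it suffices to prove $P=0$, and I argue by contradiction assuming $P>0$.

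First I would record a quantitative refinement of the equality clause of Lemma \ref{lemma:lipschitz_properties}. With $A_f$ the game operator and $D=\max(v-u)$, the two differences $\max_{z\sim x}v-\max_{z\sim x}u$ and $\min_{z\sim x}v-\min_{z\sim x}u$ are each at most $D$, and $A_fv(x)-A_fu(x)$ is their average. Hence if $A_fv(x)-A_fu(x)\ge D-\delta$, then each difference is at least $D-2\delta$, and for any neighbor $y\sim x$ with $u(y)=\min_{z\sim x}u(z)$ one deduces both $v(y)-u(y)\ge D-2\delta$ and $v(y)\le\min_{z\sim x}v(z)+2\delta$. This is the approximate analogue of the single backward step used in the last paragraph of the proof of Proposition \ref{lemma:subsequential_to_complete_convergence}, where $P_n$ was constant and no error $\delta$ appeared.

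Next, fix $k$ and set $\eta_n:=P_{n-1}-P_n\ge 0$, noting $\eta_n\to 0$ since $P_n\to P$. Starting from a point $x_N$ at which $D_N$ attains its maximum $P_N$ (so the initial tolerance is $\delta_N=0$), I build a backward chain: applying the refinement at index $n$ with $u=u_{n-1}$, $v=u_n$ and $\delta=\eta_n+\delta_n$ — where $\delta_n$ measures how far $x_n$ is from maximizing $D_n$ — produces a neighbor $x_{n-1}\sim x_n$ with $D_{n-1}(x_{n-1})\ge P_{n-1}-\delta_{n-1}$, $\delta_{n-1}=2(\eta_n+\delta_n)$. Using the loop $x_n\sim x_n$ (present for adjacency graphs since $d(x,x)=0\le\epsilon$, and by hypothesis for finite graphs with loops) one then gets
\[
u_n(x_n)\ \ge\ u_{n-1}(x_{n-1})+P_{n-1}-4(\eta_n+\delta_n).
\]
Telescoping from $N$ down to $N-k$ and using $P_{n}\ge P$ yields $u_N(x_N)-u_{N-k}(x_{N-k})\ge kP-E_{N,k}$, with $E_{N,k}$ the accumulated error.

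The main obstacle is that the tolerance doubles at each backward step ($\delta_{n-1}=2(\eta_n+\delta_n)$), so $E_{N,k}$ carries coefficients growing like $2^k$ and cannot be controlled uniformly in $k$. The remedy is to keep $k$ fixed: then $E_{N,k}\le C_k\max_{N-k<n\le N}\eta_n$, and since $\eta_n\to 0$ we may send $N\to\infty$ to force $E_{N,k}<kP/2$. Because the orbit $(u_n)$ is bounded (Theorem \ref{thm:first_result} with $c_f=0$), the quantity $u_N(x_N)-u_{N-k}(x_{N-k})$ is bounded above by a constant $R$ independent of $N$ and $k$, whence $kP/2\le R$ for every $k$; letting $k\to\infty$ forces $P=0$. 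The symmetric argument applied to $(-f,-u_0)$ gives $\min D_n\to 0$, completing the proof.
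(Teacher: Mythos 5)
Your proposal is correct and follows essentially the same route as the paper's proof: a backward chain of neighbors extracted from the near-equality case of Lemma \ref{lemma:lipschitz_properties} (equivalently, from the recursion \eqref{eq:recursion}), with tolerances doubling at each step and the loop $x\sim x$ used to pass from $\min_{z\sim x}u_n(z)$ to $u_n(x)$, ending in a contradiction with the boundedness of $(u_n)$ supplied by Theorem \ref{thm:first_result} when $c_f=0$ --- compare your chain inequality with \eqref{eq:convergence_of_differences_step} and the telescoped bound \eqref{eq:going_back_1}. The only (cosmetic) difference is how the exponentially growing error is tamed: you fix the chain length $k$, send $N\to\infty$ so that the window increments $\eta_n=P_{n-1}-P_n$ vanish, and then let $k\to\infty$, whereas the paper keeps a uniform tolerance $\delta$ on the tail of $(M_n^f(u_0))$ and chooses $k(\delta)\approx \log(M/\delta)/\log 2$ so that $kM-2^k\delta\to\infty$ as $\delta\downarrow 0$.
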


\begin{proof}
We will prove that $\lim_n \max(u_{n+1}-u_n) = 0$. The claim then follows from the fact that $\lim_n \min(u_{n+1}-u_n) = 0$ which follows by replacing $u_n$ by $-u_n$ and $f$ by $-f$.

First assume that for some real numbers $\lambda_1$ and $\lambda_2$, a vertex $x \in V$ and a positive integer $n$ we have $u_{n+1}(x) -u_n(x) \geq \lambda_1$ and $M_{n}^f(u_0) \leq \lambda_2$. Since $\max_{z \sim x}u_n(z) - \max_{z \sim x}u_{n-1}(z) \leq \lambda_2$, by \eqref{eq:recursion} we see that 
\[
\min_{z \sim x}u_{n}(z)-\min_{z \sim x}u_{n-1}(z) \geq 2\lambda_1 - \lambda_2.
\] 
This implies that for a vertex $y \sim x$, such that $u_{n-1}(y) = \min_{z \sim x}u_{n-1}(z)$, we have 
\begin{equation}\label{eq:convergence_of_differences_step}
\min\{u_n(x),u_n(y)\} \geq  \min_{z \sim x}u_{n}(z) \geq u_{n-1}(y) + 2\lambda_1 - \lambda_2.
\end{equation}
We will inductively apply this simple argument to prove the statement. 

As argued in the proof of Proposition \ref{lemma:subsequential_to_complete_convergence} the sequence $(M_n^f(u_0))$ is non-increasing and nonnegative and therefore converges to $M=\lim_nM_n^f(u_0) \geq 0$. For a fixed $\delta>0$ let $n_0$ be an integer such that $M_n^f(u_0) \leq M + \delta$, for all $n \geq n_0$. For a given positive integer $k$, let $x_0$ be a point such that $u_{n_0+k}(x_0)-u_{n_0+k-1}(x_0) \geq M$. Then applying the reasoning that leads  to \eqref{eq:convergence_of_differences_step} for $\lambda_1=M$ and $\lambda_2=M+\delta$, we can find a point $x_1$ such that 
\[
\min\{u_{n_0+k-1}(x_0),u_{n_0+k-1}(x_1)\} \geq u_{n_0+k-2}(x_1)+(M-\delta).
\]
If $k \geq 3$ we can apply the same argument for functions $u_{n_0+k-1}$, $u_{n_0+k-2}$ and $u_{n_0+k-3}$, point $x_1$, $\lambda_1=M-\delta$ and $\lambda_2=M+\delta$.
Inductively repeating this reasoning we obtain a sequence of points $(x_\ell)$, $1 \leq \ell \leq k-1$ such that 
\[
\min\{u_{n_0+k-\ell}(x_{\ell-1}), u_{n_0+k-\ell}(x_\ell)\} \geq u_{n_0+k-\ell-1}(x_\ell) + M-(2^{\ell}-1)\delta.
\]
Summing the inequalities 
\[
u_{n_0+k-\ell}(x_{\ell-1}) \geq u_{n_0+k-\ell-1}(x_\ell) + M-(2^{\ell}-1)\delta,
\]
for $1 \leq \ell \leq k-1$ and $u_{n_0+k}(x_0)-u_{n_0+k-1}(x_0) \geq M$ leads to
\begin{equation}\label{eq:going_back_1}
u_{n_0+k}(x_0) \geq u_{n_0}(x_{k-1}) + kM - 2^k\delta,
\end{equation}
for all $k \geq 1$. Taking $k(\delta)$ to be the smallest integer larger than $\frac{\log (M/\delta)}{\log 2}$ we obtain 
\begin{equation}\label{eq:going_back_2}
u_{n_0+k(\delta)}(x_0) \geq u_{n_0}(x_{k(\delta)-1}) + M\Big(\frac{\log(M/\delta)}{\log 2}-2\Big).
\end{equation}
If $M>0$ then $\lim_{\delta \downarrow 0}M\Big(\frac{\log(M/\delta)}{\log 2}-2\Big)=\infty$, which by \eqref{eq:going_back_2} implies that the sequence $(u_n)$ is unbounded. This is a contradiction with the assumption that $c_f=0$.
\end{proof}

\begin{lemma}\label{prop:equicontinuity}
The sequence of game values $(u_n)$ for a game on an adjacency graph is an equicontinuous sequence of functions.
\end{lemma}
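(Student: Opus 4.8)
The plan is to produce a single modulus of continuity valid for every $u_n$; writing $\omega_v(\delta):=\sup\{|v(a)-v(b)|:d(a,b)\le\delta\}$, I want to show that $\omega^*(\delta):=\sup_n\omega_{u_n}(\delta)$ tends to $0$ as $\delta\downarrow 0$. The backbone is the geometric fact already recorded in Remark \ref{rem:continuity}: if $d(x,y)\le\delta$ then every point of $B(x,\epsilon)$ lies within $\delta$ of a point of $B(y,\epsilon)$. This shows that the operators $v\mapsto\overline v^\epsilon$ and $v\mapsto\underline v_\epsilon$ do not increase the modulus, i.e. $\omega_{\overline v^\epsilon}(\delta)\le\omega_v(\delta)$ and $\omega_{\underline v_\epsilon}(\delta)\le\omega_v(\delta)$. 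Inserting this into the recursion \eqref{eq:recursion}, which reads $u_{n+1}=\tfrac12(\overline{u_n}^\epsilon+\underline{u_n}_\epsilon)+f$, gives
\[
\omega_{u_{n+1}}(\delta)\le\omega_{u_n}(\delta)+\omega_f(\delta).
\]

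If $f$ were constant this would already finish the proof: then $\omega_{u_n}(\delta)\le\omega_{u_0}(\delta)$ for all $n$, and equicontinuity follows from the uniform continuity of $u_0$ on the compact space $V$. The whole difficulty is the per–step contribution $\omega_f(\delta)$, which the inequality above allows to accumulate linearly in $n$. I would control it by separating scales. At scales $\ge\epsilon$ a uniform bound comes for free from bounded oscillation: starting from a pair with $u_n(x)-u_n(y)=s$ and $d(x,y)\le\epsilon$, the identity $\overline{u_n}^\epsilon+\underline{u_n}_\epsilon=2(u_{n+1}-f)$ forces the $\epsilon$–ball increment at a maximizing neighbour to be at least $s$ up to an additive error $O(\|u_{n+1}-u_n\|+\|f\|)$; iterating in $\epsilon$–steps toward the successive maximizers, exactly as in \eqref{eq:convergence_of_differences_step}, builds a monotone chain whose total rise is of order $s^2/(\|f\|+\sup_n\|u_{n+1}-u_n\|)$, and comparing this with the oscillation bound $\osc(u_n)\le C$ of Lemma \ref{lemma:boundedness_of_differences} yields $s\le K_0$ for a constant $K_0$ independent of $n$. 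By monotonicity of $\delta\mapsto\omega_{u_n}(\delta)$ this gives the uniform (but non–vanishing) bound $\omega_{u_n}(\delta)\le K_0$ for all $\delta\le\epsilon$.

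The genuinely fine–scale ($\delta\ll\epsilon$) vanishing has to come from a different feature of the averaging: the map $\tfrac12(\overline{(\cdot)}^\epsilon+\underline{(\cdot)}_\epsilon)$ not only preserves the modulus but \emph{damps} oscillations concentrated below scale $\epsilon$ — a jump of height $h$ across a $\delta$–window is replaced by two jumps of height $h/2$ roughly $2\epsilon$ apart. Consequently the fine–scale content that $f$ injects at each step is halved and dispersed by the subsequent applications of the recursion, so the repeated injections $\omega_f(\delta)$ behave like a geometric rather than an arithmetic series and $\omega^*(\delta)$ stays bounded by a fixed multiple of $\omega_{u_0}(\delta)+\omega_f(\delta)$, which tends to $0$ with $\delta$. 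Making this precise is where I would combine the damping with the bounded oscillation, running the chain argument now on the fine scale to cap the number of steps along which a $\delta$–scale increment of given size can survive.

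The main obstacle is exactly this last step. There is no contraction at the level of a single application of the recursion: a function with a sharp spike near the far boundary of $B(x,\epsilon)$ and a sharp dip near the far boundary of $B(y,\epsilon)$ saturates the $\overline{(\cdot)}^\epsilon$ and the $\underline{(\cdot)}_\epsilon$ differences at the same time, so $\omega_{A_f v}(\delta)$ can equal $\omega_v(\delta)$. The damping is therefore only amortized over several steps, and the delicate work is to convert the heuristic ``the $\omega_f(\delta)$ injections decay geometrically'' into a rigorous uniform bound; the scale separation — a Lipschitz estimate at scales $\ge\epsilon$ from the chain and bounded oscillation, against damped propagation of the moduli of $u_0$ and $f$ below scale $\epsilon$ — is the technical heart of the argument.
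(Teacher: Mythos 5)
The sound part of your argument is the observation that $v\mapsto\overline{v}^{\epsilon}$ and $v\mapsto\underline{v}_{\epsilon}$ do not increase the modulus of continuity (this is exactly the overlapping-balls fact recorded in Remark \ref{rem:continuity}), whence $\osc(u_{n+1},\delta)\le \osc(u_n,\delta)+\osc(f,\delta)$. But this only yields linear accumulation in $n$, and everything you propose in order to close the gap at scales $\delta\ll\epsilon$ --- the ``damping'' of fine-scale oscillation by the averaging step and the resulting geometric summability of the injections $\osc(f,\delta)$ --- is left as a heuristic. You yourself observe that the single-step damping is false ($\osc(A_fv,\delta)$ can equal $\osc(v,\delta)$ when the extremizers sit near the far boundaries of the two balls), and the amortized version you then appeal to is precisely the statement that would need to be proved; no mechanism is supplied for it, and none is apparent: nothing in the dynamics prevents a sharp $\delta$-scale jump from being relocated, undiminished, to the rim of the next ball at every step. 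So the proof is not complete, and the missing step is the technical heart of the lemma, not a routine verification.

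The paper closes this gap by a different mechanism: a backward-in-time blow-up argument. One first normalizes so that $c_f=0$ (Lemma \ref{lemma:comparison}(ii)), which makes $(u_n)$ bounded, and uses Lemma \ref{lemma:convergence_of_differences} to make $\|u_{n+1}-u_n\|$ small beyond some $n_0$. If $\osc(u_{n_0+k},\rho)\ge\delta_0$ for arbitrarily small $\rho$ and large $k$, then writing the recursion at two $\rho$-close points as a sum of a max-difference and a min-difference (via $\overline{u_n}^{\epsilon}+\underline{u_n}_{\epsilon}=2(u_{n+1}-f)$), and using the overlapping-balls geometry to bound the max-difference by $\osc(u_n,\rho)$, Lemma \ref{lemma:equicontinuity_help} produces a new $\rho$-close pair at time $n_0+k-1$ carrying a comparable defect \emph{together with} the telescoping estimate \eqref{eq:equicontinuity_new_pair_2}: the function value drops by roughly $\delta_0$ per backward step. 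The error terms compound like $2^{\ell}$, so one can only iterate logarithmically many times in the size of the errors, but since $\tau$, $\osc(f,\rho)$ and $\|u_{n+1}-u_n\|$ can all be made arbitrarily small, the accumulated drop $k\delta_0$ is unbounded, contradicting the boundedness of $(u_n)$. This quantitative per-backward-step drop is exactly the ingredient your proposal lacks; without it, or some rigorous substitute for your ``geometric decay'' claim, the argument does not go through.
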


The proof of this lemma uses an idea similar to the proof of Lemma \ref{lemma:convergence_of_differences}. We obtain a contradiction by constructing a sequence of points along which the function values will be unbounded. Since the induction step is more complicated we put it into a separate lemma. First define the oscillation of a continuous function $v \colon V \to \mathbb{R}$ as $\osc(v,\delta) = \sup_{d(x,y) \leq \delta}|v(x)-v(y)|$.

\begin{lemma}\label{lemma:equicontinuity_help}
Let $(u_n)$ be a sequence of game values played on an adjacency graph.
Assume that for positive real numbers $\lambda_1$, $\lambda_2$, $\lambda_3$, $\rho < \epsilon$ and a positive integer $n$ we have
\begin{equation}\label{eq:equicontinuity_assumptions}
\osc(f,\rho) \leq \lambda_1, \ \osc(u_n,\rho) \leq \lambda_2, \ \text{and} \ \   u_n  \leq u_{n+1} + \lambda_3.
\end{equation}
Let $(x,y)$ be a pair of points which satisfies  $d(x,y) < \rho$ and $u_{n+1}(x)-u_{n+1}(y) \geq \delta$, for some $\delta> 0$. Then there are points $(x_1,y_1)$ which satisfy 
 $d(x_1,y_1) < \rho$, and the inequalities
\begin{equation}
  \label{eq:equicontinuity_new_pair_1}
u_n(x_1) - u_n(y_1) \geq  2\delta-2\lambda_1 - \lambda_2,
\end{equation}
 and 
\begin{equation}
  \label{eq:equicontinuity_new_pair_2}
u_{n+1}(y)-  u_n(y_1) \geq 2\delta-2\lambda_1 - \lambda_2 - \lambda_3.
\end{equation}
\end{lemma}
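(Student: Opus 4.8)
The plan is to transport the doubling mechanism from the proof of Lemma~\ref{lemma:convergence_of_differences} from the temporal to the spatial setting. There the gap $u_{n+1}(x)-u_n(x)$ at a fixed vertex was doubled by writing it as the average of a max-difference and a min-difference and then controlling the max-difference; here the controlled term is supplied instead by the oscillation bound $\osc(u_n,\rho)\le\lambda_2$, and the spatial analogue of the self-loop $x\sim x$ will be the elementary observation that $y\in B(x,\epsilon)$, which holds precisely because $d(x,y)<\rho<\epsilon$. All the extrema below are attained since $u_n$ is continuous and the closed balls $B(x,\epsilon)$, $B(y,\epsilon)$ are compact.

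First I would control the max-difference. Using Remark~\ref{rem:continuity} (every point of $B(x,\epsilon)$ lies within distance $d(x,y)<\rho$ of some point of $B(y,\epsilon)$) together with $\osc(u_n,\rho)\le\lambda_2$, one gets $\max_{B(x,\epsilon)}u_n\le\max_{B(y,\epsilon)}u_n+\lambda_2$. Feeding this, and $f(x)-f(y)\le\osc(f,\rho)\le\lambda_1$, into the recursion~\eqref{eq:recursion} applied to the assumption $u_{n+1}(x)-u_{n+1}(y)\ge\delta$, that is into
\[
\delta\le\tfrac12\big(\max_{B(x,\epsilon)}u_n-\max_{B(y,\epsilon)}u_n\big)+\tfrac12\big(\min_{B(x,\epsilon)}u_n-\min_{B(y,\epsilon)}u_n\big)+\big(f(x)-f(y)\big),
\]
isolates the doubled min-difference
\[
\min_{B(x,\epsilon)}u_n\ge\min_{B(y,\epsilon)}u_n+\big(2\delta-2\lambda_1-\lambda_2\big).
\]
Now let $y_1\in B(y,\epsilon)$ be a point attaining $\min_{B(y,\epsilon)}u_n$. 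For \eqref{eq:equicontinuity_new_pair_1} I would apply Remark~\ref{rem:continuity} once more to choose $x_1\in B(x,\epsilon)$ with $d(x_1,y_1)\le d(x,y)<\rho$; then $u_n(x_1)\ge\min_{B(x,\epsilon)}u_n$, and the displayed lower bound immediately gives $u_n(x_1)-u_n(y_1)\ge 2\delta-2\lambda_1-\lambda_2$.

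The step I expect to be the crux is \eqref{eq:equicontinuity_new_pair_2}, and this is where the hypothesis $u_n\le u_{n+1}+\lambda_3$ and the observation $y\in B(x,\epsilon)$ enter. Since $d(x,y)<\epsilon$ we have $y\in B(x,\epsilon)$, hence $u_n(y)\ge\min_{B(x,\epsilon)}u_n$, and the doubled min-difference then yields $u_n(y)\ge u_n(y_1)+(2\delta-2\lambda_1-\lambda_2)$. Converting the left-hand value to level $n+1$ through $u_{n+1}(y)\ge u_n(y)-\lambda_3$ produces exactly $u_{n+1}(y)-u_n(y_1)\ge 2\delta-2\lambda_1-\lambda_2-\lambda_3$. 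The one point requiring care is that a single $y_1$ must serve both conclusions; this is automatic, because the doubled inequality bounds $\min_{B(x,\epsilon)}u_n$ from below by a quantity attached to the single point $y_1$, and both $x_1$ and $y$ inherit that lower bound by virtue of lying in $B(x,\epsilon)$.
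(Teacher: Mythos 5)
Your argument is correct and rests on the same doubling mechanism as the paper's proof: plug the recursion \eqref{eq:recursion} into $u_{n+1}(x)-u_{n+1}(y)\ge\delta$, bound the max-side difference by $\lambda_2$ via the Remark~\ref{rem:continuity} argument, isolate $\min_{B(x,\epsilon)}u_n-\min_{B(y,\epsilon)}u_n\ge 2\delta-2\lambda_1-\lambda_2$, take $y_1$ to be a minimizer over $B(y,\epsilon)$, and exploit $y\in B(x,\epsilon)$ (the spatial analogue of the self-loop) together with $u_{n+1}\ge u_n-\lambda_3$ for \eqref{eq:equicontinuity_new_pair_2}. Where you genuinely diverge is the selection of $x_1$: the paper insists on a point with $u_n(x_1)=\min_{z\sim x,\,z\sim y}u_n(z)$, which forces it to define the radius $r$ as a supremum, verify $0<r<\rho$ via \eqref{eq:equicontinuity_separation}, and split off the degenerate case $\delta\le\lambda_1+\lambda_2/2$ where that construction breaks down. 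You observe that only the one-sided bound $u_n(x_1)\ge\min_{B(x,\epsilon)}u_n$ is needed, which holds for \emph{every} $x_1\in B(x,\epsilon)$, so it suffices to pick any point of $B(x,\epsilon)$ within distance $<\rho$ of $y_1$ (which exists by the same length-space argument as in Remark~\ref{rem:continuity}). This shortens the proof and makes the case distinction unnecessary, since your inequalities hold regardless of the sign of $2\delta-2\lambda_1-\lambda_2$. Two minor points: your claim $d(x_1,y_1)\le d(x,y)$ needs geodesics to be attained (true in a compact length space by Hopf--Rinow), but only $d(x_1,y_1)<\rho$ is required and that follows from the infimum-of-path-lengths argument alone; and the attainment of $\min_{B(y,\epsilon)}u_n$ is justified, as you note, by the continuity of $u_n$ and compactness of closed balls.
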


\begin{proof}
If $\delta \leq \lambda_1 + \lambda_2/2$ then consider the set $S=\{z: u_n(z) \leq u_{n+1}(y)+\lambda_3\}$, which is nonempty by the last condition in \eqref{eq:equicontinuity_assumptions}. If $S$ is equal to the whole space $V$, then \eqref{eq:equicontinuity_new_pair_2} will be satisfied automatically, and take $x_1$ and $y_1$ to be any points such that $d(x_1,y_1) < \rho$ and  $u_n(x_1) \geq  u_n(y_1)$ (so that \eqref{eq:equicontinuity_new_pair_1} is satisfied). Otherwise, since $V$ is path connected we can choose points $x_1$ and $y_1$ so that $d(x_1,y_1) < \rho$ and $y_1 \in S$ (so that \eqref{eq:equicontinuity_new_pair_2} is satisfied) and $x_1 \notin S$ (so that \eqref{eq:equicontinuity_new_pair_1} is satisfied).
 In the rest of the proof we will assume that $\delta > \lambda_1 + \lambda_2/2$.

Choose points $x_m$, $x_M$, $y_m$, and $y_M$ so that
\begin{equation*}
\begin{array}{c c}
u_{n}(x_m) = \min_{z\sim x}u_{n}(z),& u_{n}(x_M) = \max_{z\sim x}u_{n}(z),\\
u_{n}(y_m) = \min_{z\sim y}u_{n}(z),& u_{n}(y_M) = \max_{z\sim y}u_{n}(z).
\end{array}
\end{equation*}
Take a point $z_m$ such that $d(y,z_m) \leq \epsilon - d(x,y)$ and $d(z_m,y_m) < \rho$, which surely exists, since $d(x,y) < \rho$ and $d(y,y_m) \leq \epsilon$. By the triangle inequality this point satisfies $d(x,z_m) \leq \epsilon$. Therefore we have $z_m \sim x$,  $z_m \sim y$ and
\begin{equation}
  \label{eq:equicontinuity_separation}
d(y_m,\{z:z\sim x,z \sim y\}) < \rho.
\end{equation}
Analogously we construct a point $z_M$ such that $d(x_M,z_M) < \rho$ and $d(z_M,y) \leq \epsilon$. Now we  have 
\begin{equation}\label{eq:equiconotinuity_separation_2}
u_n(x_M) - u_n(y_M) \leq u_n(x_M) - u_n(z_M) \leq \lambda_2.
\end{equation}
Next calculate
\begin{align}\label{eq:equicontinuity_1}
(u_{n}(x_M) - u_{n}(y_M))& + (\min_{z \sim x, z\sim y}u_{n}(z) - u_{n}(y_m)) \nonumber \\
&\geq (u_{n}(x_M) - u_{n}(y_M)) + (u_{n}(x_m) - u_{n}(y_m)) \nonumber \\
& = 2(u_{n+1}(x)-f(x) -u_{n+1}(y) + f(y))\nonumber \\
&\geq 2\delta - 2\lambda_1.
\end{align}
Plugging \eqref{eq:equiconotinuity_separation_2} into \eqref{eq:equicontinuity_1} we get
\begin{equation}\label{eq:equaicontinuity_2}
\min_{z \sim x, z\sim y}u_{n}(z) - u_{n}(y_m) \geq 2\delta-2\lambda_1 - \lambda_2.
\end{equation}
Now define $r$ as the supremum of the values $\tilde{r}$ such that for every $z_0 \in B(y_m,\tilde{r})$ we have $u_n(z_0) < \min_{z \sim x, z\sim y}u_{n}(z)$. 
By \eqref{eq:equicontinuity_separation}, \eqref{eq:equaicontinuity_2} and the assumption on $\delta$ it follows that $r$ is well defined and $0 < r < \rho$.
Finally we take a point $x_1$ such that $d(y_m,x_1) = r$ with $u_n(x_1) = \min_{z \sim x, z\sim y}u_{n}(z)$ (which exists by the definition of $r$). By \eqref{eq:equaicontinuity_2} we have
\begin{equation*}
u_n(x_1) = \min_{z \sim x, z \sim y}u_n(z) \geq u_n(y_m) + 2\delta-2\lambda_1 - \lambda_2.
\end{equation*}
Furthermore \eqref{eq:equaicontinuity_2} also implies
\begin{equation*}
u_{n+1}(y) \geq u_n(y)-\lambda_3 \geq \min_{z \sim x, z \sim y}u_n(z) - \lambda_3 \geq  u_n(y_m) + 2\delta-2\lambda_1 - \lambda_2 - \lambda_3.
\end{equation*}
This proves the claim with $y_1=y_m$.
\end{proof}

\begin{proof}[Proof of Lemma \ref{prop:equicontinuity}]
By part (ii) of Lemma \ref{lemma:comparison} it is enough to prove the claim for an arbitrary shift of the payoff function $f$, so by the definition of $c_f$, we can assume that $c_f=0$ (see Remark \ref{rem:shifting constant scaling}).

By Lemma \ref{lemma:convergence_of_differences}, for a given $\lambda_3 > 0$ choose $n_0$ large enough so that $\|u_n-u_{n-1}\| \leq \lambda_3$, for all $n \geq n_0$. Assume that the sequence $(u_n)$ is not equicontinuous. Then there is a $\delta_0 > 0$ such that for any $\rho >0$ there are infinitely many integers $k$ satisfying $\osc(u_{n_0+k},\rho) \geq \delta_0$. Fix such a $k$ and $\rho$ and define 
$
\delta_1 =  \osc(u_{n_0+k},\rho).
$
Since $\|u_{n_0+k}-u_{n_0+\ell}\| \leq (k-\ell) \lambda_3$, for all $0 \leq \ell \leq k$ we get that 
\begin{equation}\label{eq:equicontinuity_l}
\osc(u_{n_0+\ell},\rho) \leq \delta_1  + 2(k-\ell) \lambda_3.
\end{equation}
Now fix an arbitrary $\tau$ and let $x_0$ and $y_0$ be points that satisfy $d(x_0,y_0) < \rho$ and
\begin{equation}\label{eq:equicontinuity_first_step}
u_{n_0+k}(x_0) - u_{n_0+k}(y_0) \geq \delta_1 - \tau.
\end{equation}
Applying Lemma \ref{lemma:equicontinuity_help} to the pair $(x_0,y_0)$ with $\delta=\delta_1-\tau$, $\lambda_2=\delta_1+2\lambda_3$, $\lambda_1 = \osc(f,\rho)$ and $\lambda_3$ defined as before
we obtain points $x_1$ and $y_1$ such that $d(x_1,y_1) < \rho$, and 
\[
u_{n_0+k-1}(x_1) - u_{n_0+k-1}(y_1) \geq \delta_1-2\tau-2\lambda_1-2\lambda_3,
\]
and
\[
u_{n_0+k}(y_0) - u_{n_0+k-1}(y_1) \geq \delta_1 - 2\tau -2\lambda_1 - 3\lambda_3.
\]
Using \eqref{eq:equicontinuity_l} and applying the same arguments inductively, for $\lambda_1$, $\lambda_3$ and $\tau$ small enough, we obtain a sequence of points $(x_\ell)$, $1 \leq \ell \leq k$, which satisfy the inequalities 
\begin{equation}
  \label{eq:equicontinuity_final_1}
u_{n_0+k-\ell}(x_\ell) - u_{n_0+k-\ell}(y_\ell) \geq \delta_1 - a_{\ell}\tau - b_{\ell}\lambda_1 - c_{\ell}\lambda_3,
\end{equation}
and
\begin{equation}
  \label{eq:equicontinuity_final_2}
  u_{n_0+k-\ell+1}(y_{\ell-1}) - u_{n_0+k-\ell}(y_\ell) \geq \delta_1 - a_{\ell}\tau - b_{\ell}\lambda_1 - (c_{\ell}+1)\lambda_3,
\end{equation}
where the coefficients satisfy 
$a_{1}=2$, $b_{1}=2$, $c_1=2$ and
\[
a_{\ell+1}=2a_\ell, \ b_{\ell+1}=2(b_\ell+1), \ c_{\ell+1}=2(c_\ell+\ell +1).
\]
This leads to $a_\ell=2^\ell$, $b_\ell=2^{\ell+1}-2$ and $c_\ell= 2^{\ell+2} -2\ell -4$. Summing \eqref{eq:equicontinuity_final_2} with these values of coefficients for $1 \leq \ell \leq k$ we obtain
\begin{equation}
  \label{eq:equicontinuity_final_3}
u_{n_0+k}(y_0) - u_{n_0}(y_k) \geq k\delta_1-2^k(2\tau+4\lambda_1+8\lambda_3).  
\end{equation}
Now taking $k$ to be the largest integer not larger than $\frac{\log(\delta_1/(2\tau+4\lambda_1+8\lambda_3))}{\log 2}$ and increasing the value of $n_0$ if necessary, leads to
\begin{equation}\label{eq:equicontinuity_final_4}
u_{n_0+k}(y_0) - u_{n_0}(y_k) \geq \delta_1\Big(\frac{\log(\delta_1/(2\tau+4\lambda_1+8\lambda_3))}{\log 2} -2\Big).\end{equation}
Since the values of $\tau$, $\lambda_1$ and $\lambda_3$ can be chosen arbitrarily small and $\delta_1$ is bounded from below by $\delta_0$, the right hand side of \eqref{eq:equicontinuity_final_4} can be arbitrarily large. This is a contradiction with the assumption that $c_f=0$.

\end{proof}

\begin{proof}[Proof of Theorem \ref{thm:graphs_with_loops}]
By Theorem \ref{thm:first_result},  $(u_n)$ is a bounded sequence of functions
In the case of finite graphs with loops the statement follows from Proposition \ref{lemma:subsequential_to_complete_convergence}.
For the case of adjacency graphs, note that,
by Lemma \ref{prop:equicontinuity} $(u_n)$ is also equicontinuous and, by the Arzela-Ascoli theorem, it has a convergent subsequence. Now the claim follows from Proposition \ref{lemma:subsequential_to_complete_convergence}.
\end{proof}

\begin{proof}[Proof of Theorem \ref{thm:solutions_on_general_graphs} in the case of adjacency graphs]
If $u$ is a solution to \eqref{eq:equation_discrete} then playing the game with terminal payoff $u_0=u$ and running payoff $f$ gives the constant sequence of game values $u_n=u$.
In the other direction, it is clear that the limit in Theorem \ref{thm:graphs_with_loops} is a solution to the equation \eqref{eq:equation_discrete}.
\end{proof}


\begin{example}\label{example:no_convergence}
Let $G$ be a bipartite graph with partition of the vertex set into $V_1$ and $V_2$ (meaning $V_1 \cap V_2=\emptyset$, $V = V_1\cup V_2$ and all edges in the graph are connecting vertices in $V_1$ and $V_2$). Let $f$ be a function on $V$ having value $1$ on $V_1$ and $-1$ on $V_2$. Then if $u_0 = 0$ it is easy to check from \eqref{eq:recursion} that $u_n=f$ if $n$ is odd and $u_n=0$ if $f$ is even, and therefore the sequence $(u_n)$ does not converge. However $u=f/2$  is a solution to \eqref{eq:equation_discrete}.
\end{example}

From the proof of Lemma \ref{lemma:convergence_of_differences} we can extract the following result about the speed of convergence.

\begin{proposition}\label{prop:speed_of_convergence}
There is a universal constant $C> 0$ such that, under the assumptions of Theorem \ref{thm:graphs_with_loops}, for $n \geq 2$ we have 
\begin{equation}
  \label{eq:speed_of_convergence_statement}
\|u_{n+1} - u_{n}\| \leq \frac{AC}{\log n},
\end{equation}
where $A = (\max u_0 - \min u_0) + \diam(G)^2 (\max f - \min f)$. 
\end{proposition}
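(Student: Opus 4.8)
The plan is to bound $M_N := M_N^f(u_0) = \max(u_N - u_{N-1})$ and, via the symmetric substitution $u_n \mapsto -u_n$, $f \mapsto -f$ (which preserves both $c_f = 0$, by Remark \ref{rem:shifting constant scaling}, and the quantity $A$, since $A$ depends on $f$ and $u_0$ only through $\max f - \min f$ and $\max u_0 - \min u_0$), the companion quantity $-m_N^f(u_0)$. As $\|u_N - u_{N-1}\| = \max\{M_N^f(u_0),\,-m_N^f(u_0)\}$, this gives the claim for $n = N-1$. By \eqref{eq:lipschitz_properties_1} the sequence $(M_n^f(u_0))_n$ is non-increasing, and by $c_f = 0$ (as in the proof of Proposition \ref{lemma:subsequential_to_complete_convergence}) it is nonnegative. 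Moreover, combining Theorem \ref{thm:first_result} with Lemma \ref{lemma:boundedness_of_differences} under $c_f = 0$ shows that every $u_n$ takes values in one fixed interval of length at most $2A$; hence $u_{m'}(x') - u_m(x) \le 2A$ for all steps and points, and in particular $M_1^f(u_0) \le A$, so the total decrease obeys $\sum_{m \ge 1}\big(M_m^f(u_0) - M_{m+1}^f(u_0)\big) \le A$.

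The engine is the inequality extracted from the proof of Lemma \ref{lemma:convergence_of_differences}. Applied to an arbitrary window of steps $[n_0, n_0 + k]$, starting from a point $x_0$ with $u_{n_0+k}(x_0) - u_{n_0+k-1}(x_0) = M_{n_0+k}^f(u_0)$ and using $M_m^f(u_0) \le M_{n_0}^f(u_0)$ throughout the window (monotonicity), the iteration of \eqref{eq:convergence_of_differences_step} and the summation leading to \eqref{eq:going_back_1} give, writing $\mu := M_{n_0+k}^f(u_0)$ and $\delta := M_{n_0}^f(u_0) - M_{n_0+k}^f(u_0)$,
\[
u_{n_0+k}(x_0) - u_{n_0}(x_{k-1}) \ge k\mu - 2^k\delta .
\]
Since the left-hand side is at most $2A$ by the uniform bound above, we get $k\mu - 2^k\delta \le 2A$, i.e. $\mu \le (2A + 2^k\delta)/k$. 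Because $M_N^f(u_0) \le M_{n_0+k}^f(u_0) = \mu$ whenever $n_0 + k \le N$, it remains only to locate inside $[1,N]$ a long window on which $M^f(u_0)$ is nearly flat.

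To produce such a window I would argue by averaging: partition $[1,N]$ into $\lfloor N/k\rfloor$ disjoint blocks of length $k$; since the nonnegative decreases of $M^f(u_0)$ over disjoint blocks sum to at most $A$, some block $[n_0,n_0+k]$ has decrease $\delta \le A/\lfloor N/k\rfloor \le 2Ak/N$ (for $N \ge 2k$). Choosing $k = k(N)$ to be the largest integer with $k2^k \le N$ forces $2^k\delta \le 2A$ and $k \ge \tfrac12\log_2 N$ once $N$ is large, whence $M_N^f(u_0) \le \mu \le 4A/k \le 8A\log 2/\log N$. Enlarging the universal constant to absorb the finitely many small $N$ — where the trivial bound $M_N^f(u_0) \le 2A$ together with $\log N$ bounded below suffices — yields \eqref{eq:speed_of_convergence_statement}.

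The main obstacle is precisely this quantitative control of the ``gap'' $\delta$: the proof of Lemma \ref{lemma:convergence_of_differences} only needs $\delta$ to be eventually small, whereas here its size must be pinned explicitly to $N$ relative to the window length $k$. The monotone boundedness of $(M_n^f(u_0))_n$ caps the total variation by $A$, and the pigeonhole over disjoint blocks converts this into a single flat window of length $\asymp \log N$ with decrease $\lesssim 2^{-k}A$ — exactly the balance needed to defeat the $2^k$ factor in the summed inequality and turn the qualitative convergence of Lemma \ref{lemma:convergence_of_differences} into the logarithmic rate.
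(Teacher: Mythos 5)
Your proof is correct, and it runs on the same engine as the paper's — the reduction to $M_n^f(u_0)$ and its mirror image, the uniform bound $u_{m'}(x')-u_m(x)\le 2A$ over all times and points, and one application of the iterated inequality \eqref{eq:going_back_1} on a window where the monotone sequence $(M_m^f(u_0))_m$ is nearly flat — but your endgame is genuinely different. The paper works by value levels: it fixes $r$, chooses the band width $\delta=r2^{-2A/r-2}$ so that $kr-2^{k+1}\delta\le 2A$ caps the number $K(r-\delta,r)$ of indices in the band $[r-\delta,r]$ by $1+2A/r$, and then sums over the recursively defined levels $r_{n+1}=r_n\bigl(1-2^{-2A/r_n-2}\bigr)$ to count all indices with $M_m^f(u_0)\ge r$ before inverting that count. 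You instead work forward from a fixed horizon $N$: the nonnegative, non-increasing sequence $(M_m^f(u_0))_m$ has total variation at most $M_1^f(u_0)$, so a pigeonhole over $\lfloor N/k\rfloor$ disjoint blocks yields a window of length $k$ with decrease $\delta\lesssim Ak/N$, and taking $k$ largest with $k2^k\le N$ makes $2^k\delta\lesssim A$ while $k\gtrsim\log N$, whence the window inequality $k\mu\le 2A+2^k\delta$ gives $M_N^f(u_0)\le\mu\lesssim A/\log N$ directly. This bypasses the paper's level-set bookkeeping (the sequence $(r_n)$, the counting function $N(\alpha,\beta)$, and the asymptotic estimate at the end) and is arguably cleaner; both arguments deliver the same $A/\log n$ rate. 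One harmless quibble: the interval argument you cite (comparing with the zero-terminal-payoff game, whose values lie in $[-A_0,A_0]$ with $A_0=\diam(G)^2(\max f-\min f)$, and then shifting by $[\min u_0,\max u_0]$) gives $M_1^f(u_0)\le 2A$ rather than $A$, so the total-variation and pigeonhole constants should be doubled; this is absorbed into the universal constant $C$.
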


\begin{proof}
Again, it is enough to prove the claim when $\|u_{n+1} - u_{n}\|$ is replaced by $M_{n}^f(u_0) = \max(u_{n} - u_{n-1})$. 
If $\max u_m < \min u_n$ for some $m<n$ then by Lemma \ref{lemma:lipschitz_properties} we have $u_{m +k(n-m)} - u_m \geq k(\min u_n - \max u_m)$ for all $k \geq 0$, which contradicts the boundedness of $(u_n)$ (which in turn follows from the assumption $c_f=0$). Similarly we get the contradiction when $\max u_m < \min u_n$ for some $n<m$.
Therefore we have
$\min u_{n} \leq \max u_m$ for all  $m$ and $n$
and Lemma \ref{lemma:boundedness_of_differences}  implies that 
\begin{equation}\label{eq:speed_of_convergence_1}
\max u_{n+k} - \min u_n \leq 2A.
\end{equation} 
By Lemmas \ref{lemma:lipschitz_properties} and \ref{lemma:convergence_of_differences} we know that $(M_n^f(u_0))$ is a non-increasing sequence converging to $0$. For given $r > \delta$ assume $n$ and $k$ are such that for all $n \leq m \leq n+k$ we have $r- \delta \leq M_m^f(u_0) \leq r$. Now \eqref{eq:going_back_1} implies that
\[
\max u_{n+k} - \min u_n \geq  kr - k\delta - 2^k\delta.
\]
Combining this with \eqref{eq:speed_of_convergence_1} we see that, if $K(r-\delta,r)$ is the number of indices $m$ such that $r-\delta \leq M_m^f(u_0)  \leq r$, then for all integers $0 \leq k \leq K(r-\delta,r)$ we have
$kr - 2^{k+1}\delta\leq 2A$.
Taking $\delta = r 2^{-2A/r-2}$ we get that $K(r-\delta,r) < 1+ 2A/r$. Now let $r_0 > 0$ and define the sequence $r_{n+1} = r_n(1-2^{-2A/r_n-2})$, which is clearly decreasing and converging to $0$. By the above discussion we have 
\begin{equation}\label{eq:speed_of_convergence_3}
K(r_{n+1} ,r_n) \leq \frac{2A}{r_n}+1.
\end{equation} 
Furthermore define $N(\alpha,\beta) = \sum K(r_{n+1},r_{n})$, where the sum is taken over all indices $n$ for which the interval $[r_{n+1},r_n]$ intersects the interval $[\alpha,\beta]$.
Defining $s_n = \log (2A/r_n)$ we have $s_{n+1} = s_n  - \log \Big(1- 2^{-e^{s_n}-2}\Big)$. Since the function $s \mapsto \log \Big(1- 2^{-e^{s}-2}\Big)$ is negative and increasing, the number of indices $n$ such that the interval $[s_n,s_{n+1}]$ intersects a given interval $[a,b]$ is no more than 
\[
\frac{b-a}{-\log\Big(1- 2^{-e^b-2}\Big)}+2 \leq (b-a)2^{e^b+2}+2,
\]
where we used the inequality $\log(1-x) \leq -x$, for $0 \leq x < 1$. 
This together with \eqref{eq:speed_of_convergence_3} implies 
\[N(2Ae^{-b}, 2Ae^{-a}) \leq \Big( (b-a)2^{e^b+2}+2 \Big) \Big( e^b + 1\Big).\]
Therefore we have 
\[
N(2Ae^{-t},2A) \leq (4+o(1)) 2^{e^t}e^t,
\]
and since  $M_1^f(u_0) \leq 2A$, there are no more than $(4+o(1))2A2^{2A/r}r^{-1}$ indices $n$ such that $M_n^f(u_0) \geq r$, which then easily implies the claim.
\end{proof}

\begin{remark}\label{rem: holds for shifts of f}
From Lemma \ref{lemma:comparison} (ii) it is clear that removing the assumption $c_f=0$ from the statements of Lemma \ref{lemma:convergence_of_differences} and Proposition \ref{prop:speed_of_convergence}
yields $\lim_n (u_{n+1}-u_n)  = c_f$ 
and
$|\|u_{n+1} - u_{n}\|-c_f| \leq \frac{AC}{\log n}$ respectively.
\end{remark}

One of the obstacles to faster convergence is the fact that for each vertex $x$ the locations  where the maximum and the minimum values of $u_n$ among its neighbors are attained depends on $n$. However, in the case of finite graphs with loops, these locations will eventually be ``stabilized'', if (for example) the limiting function is one-to-one. Therefore after a certain (and possibly very large) number of steps, we will essentially see a convergence of a certain Markov chain, which is exponentially fast. To prove this in the next theorem recall some basic facts about finite Markov chains. A time homogeneous Markov chain $X$ on a finite state space is given by its transition probabilities $P(i,j) = \mathbb{P}(X_1=j|X_0=i)$. Denote the transition probabilities in $k$  steps as $P^k(i,j)=\mathbb{P}(X_k=j|X_0=i)$ (these are just entries of the $k$th power of the matrix $(P(i,j))_{ij}$).
An \emph{essential class} of a  Markov chain is a maximal subset of the state space with the property that for any two elements  $i$ and $j$ from this set there is an integer $k$ such that $P^k(i,j)>0$. An essential class is called \emph{aperiodic} if it contains an element $i$ such that the greatest common divisor of integers $k$ satisfying $P^k(i,i)>0$ is $1$.
The state space can be decomposed into several disjoint essential classes and a set of elements $i$ which are not contained in any essential class and which necessarily satisfy $P^k(i,j)>0$ for some integer $k$ and some element $j$ contained in an essential class. If all essential classes of a Markov chain are aperiodic then the distribution of $(X_n)$ converges to a stationary distribution and, moreover this convergence is exponentially fast. This result is perhaps more standard when the chain is \emph{irreducible} (the whole state space is one essential class). However the more general version we stated is a straightforward consequence of this special case after we observe that the restriction of a Markov chain to an aperiodic essential class is an irreducible Markov chain, and that for any element $i$ not contained in any essential class, conditioned on $X_0=i$, the time of the first entry to an essential class is stochastically dominated from above by a geometric random variable. For more on this topic see \cite{LPW}.

\begin{proposition}\label{prop:fast_convergence_after_burnout}
Let $G$ be a finite graph with a loop at each vertex, $f$ a function on the set of vertices and $(u_n)$ a sequence of game values played with running payoff $f$. Assuming $c_f=0$, let $u$ be the limit of the sequence $(u_n)$ and assume that for each vertex $x \in V$ there are unique neighbors $y_m$ and $y_M$ of $x$, such that $u(y_m) = \min_{y \sim x}u(y)$ and  $u(y_M) = \max_{y \sim x}u(y)$.
 Then there are constants $C>0$ and $0<\alpha<1$ (depending on $G$, $f$ and $u_0$) such that $\|u_n - u\| \leq C\alpha^n$.
\end{proposition}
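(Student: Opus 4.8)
The plan is to show that the uniqueness hypothesis forces the locations of the minimizing and maximizing neighbours to \emph{stabilize}, after which the recursion \eqref{eq:recursion} becomes an affine iteration governed by a fixed row-stochastic matrix, and then to analyze that iteration using the facts about finite Markov chains recalled just above. First I would establish stabilization. Since $G$ is finite and, for each $x$, the neighbours $y_m(x)$ and $y_M(x)$ realizing $\min_{y\sim x}u(y)$ and $\max_{y\sim x}u(y)$ are unique, there is a gap $\gamma>0$ such that every neighbour $y\sim x$ with $y\neq y_m(x)$ satisfies $u(y)\ge u(y_m(x))+\gamma$, and every neighbour $y\neq y_M(x)$ satisfies $u(y)\le u(y_M(x))-\gamma$. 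By Theorem \ref{thm:graphs_with_loops} we have $u_n\to u$ uniformly, so choosing $n_0$ with $\|u_n-u\|<\gamma/2$ for all $n\ge n_0$ guarantees that for such $n$ the minimizing and maximizing neighbours of $u_n$ at each vertex coincide with $y_m(x)$ and $y_M(x)$.

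Next I would linearize. Define the row-stochastic matrix $P$ by placing mass $\tfrac12$ at each of $y_m(x)$ and $y_M(x)$ (mass $1$ when these coincide). For $n\ge n_0$ the recursion \eqref{eq:recursion} then reads $u_{n+1}=Pu_n+f$, while the limit satisfies $u=Pu+f$; subtracting and iterating gives $u_n-u=P^{\,n-n_0}(u_{n_0}-u)$ for $n\ge n_0$. It therefore suffices to show that $\|P^{k}w\|$ decays exponentially in $k$, where $w=u_{n_0}-u$.

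The heart of the argument is to verify that every essential class of the chain determined by $P$ is aperiodic, so that the recalled convergence result applies and yields a limiting matrix $\Pi$ with $\|P^{k}-\Pi\|\le C\alpha^{k}$ for some $\alpha\in(0,1)$. This is exactly where the loops enter. Given an essential class $S$, pick $x_0\in S$ maximizing $u$ over $S$. Since essential classes are closed under the transitions of the chain, $y_M(x_0)\in S$, whence $u(y_M(x_0))\le u(x_0)$; on the other hand the loop $x_0\sim x_0$ forces $u(y_M(x_0))=\max_{y\sim x_0}u(y)\ge u(x_0)$. Thus $x_0$ itself achieves the maximum of $u$ among its neighbours, and by the uniqueness hypothesis $y_M(x_0)=x_0$, so $P(x_0,x_0)\ge\tfrac12>0$; this self-transition makes $S$ aperiodic.

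Finally I would combine these ingredients. Because $u_n-u=P^{\,n-n_0}w\to 0$ by Theorem \ref{thm:graphs_with_loops} while $P^{k}\to\Pi$, the limit $\Pi w$ must vanish, and hence $\|u_n-u\|=\|P^{\,n-n_0}w-\Pi w\|\le C\alpha^{\,n-n_0}\|w\|$ for $n\ge n_0$. Absorbing the factor $\alpha^{-n_0}\|w\|$ together with the finitely many terms $n<n_0$ into a larger constant yields $\|u_n-u\|\le C\alpha^{n}$ for all $n$, as claimed. I expect the main obstacle to be the aperiodicity step: it genuinely requires all three of the loop at each vertex (to produce a real self-transition at the extremal state), the uniqueness hypothesis (to pin the maximizing neighbour down to $x_0$), and the closedness of essential classes (which is what forbids the extremal value from being strictly exceeded along a transition staying inside the class); the subsequent linear-iteration and stabilization steps are comparatively routine.
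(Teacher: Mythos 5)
Your proof is correct and follows essentially the same route as the paper's: stabilization of the extremal neighbours via uniform convergence and the uniqueness hypothesis, linearization to a fixed stochastic matrix, and aperiodicity of essential classes via the self-loop at the $u$-maximizer of each class. The only (cosmetic) difference is that you iterate $u_n-u=P^{\,n-n_0}(u_{n_0}-u)$ directly, whereas the paper applies the same matrix to the differences $v_n=u_{n+1}-u_n$ and then sums a geometric series.
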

\begin{proof}
Let $A=(a_{xy})$ be the matrix such that $a_{xy}=1/2$ if either $u(y) = \max_{z \sim x}u(z)$ or $u(y) = \min_{z \sim x}u(z)$ and $0$ otherwise.
The Markov process $X_k$  on the vertex set, with the transition matrix $A$, has the property that all essential classes are aperiodic. To see this fix an essential class $I \subset V$ let $x$ be a vertex such that $u(x) = \max_Iu$, and observe that $a_{xx}=1/2$. Therefore the distribution of $X_k$ converges exponentially fast to a stationary distribution.

Since $u = \lim_n u_n$, there is an $n_0$ such that for $n \geq n_0$ and any vertex $x$ the unique neighbors of $x$ where $u$ attains the value $\max_{z \sim x}u(z)$ ($\min_{z \sim x}u(z)$) and where $u_n$ attains the value $\max_{z \sim x}u_n(z)$ ($\min_{z \sim x}u_n(z)$) are equal. Writing functions as column vectors, this means that $u_{n+1} = Au_n + f$ for $n \geq n_0$.
Thus, defining $v_n=u_{n+1} - u_n$, for $n \geq n_0$  we have 
\[
v_{n+1} = u_{n+2} - u_{n+1} = A u_{n+1} - A u_n = A v_n.
\]
This means that for any $k \geq 0$ we have $v_{n_0+k}(x) = \mathbb{E}_x(v_{n_0}(X_k))$. Therefore the sequence of functions $(v_{n_0+k})_k$ converges exponentially fast. Since we necessarily have $\lim_n v_n = 0$ the claim follows from $\|u_n - u\| \leq \sum_{k=n}^\infty \|v_k\|$.
\end{proof}

Our next goal is to prove Theorem \ref{thm:solutions_on_general_graphs} for all finite graphs. 
Recall the (nonlinear) operator $A_f$ from Lemma \ref{lemma:lipschitz_properties}.
For a real number $c \in \mathbb{R}$, and a function $u$ define $D_f(u,c) = \|A_{f - c}u-u\|$. 
To prove the existence of a solution it is  enough to prove that $D_f$ has a minimum value equal to 0. First we use a compactness argument to prove that it really has a minimum. For the rest of this section all the graphs will be arbitrary connected finite graphs.

\begin{lemma}\label{lemma: estimates}
Let $G$ be a finite connected graph, and $f$ and $u$ functions on $V$. Then 
\begin{equation}\label{eq:1}
\max u - \min u \leq 2^{\diam(G)+1}(\|f\| +D_f(u,0)).
\end{equation}
\end{lemma}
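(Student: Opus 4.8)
The plan is to turn the defining bound on $D_f(u,0)=\|A_fu-u\|$ into a one-sided recursion for $u$ along a shortest path, and then solve that recursion. Write $D=D_f(u,0)$, so that $|A_fu(x)-u(x)|\le D$ for every $x$, and recall $A_fu(x)=\tfrac12\big(\max_{y\sim x}u(y)+\min_{y\sim x}u(y)\big)+f(x)$. From the upper bound $A_fu(x)\le u(x)+D$ one rearranges to
\[
\max_{y\sim x}u(y)+\min_{y\sim x}u(y)\le 2u(x)-2f(x)+2D\le 2u(x)+2\|f\|+2D,
\]
using $-f(x)\le\|f\|$. This is the only place the hypothesis is used.

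The key idea is to eliminate the minimum term by bounding it from below by the global minimum. Let $m=\min u$ and $M=\max u$. Since $\min_{y\sim x}u(y)\ge m$ for every $x$, the displayed inequality gives $\max_{y\sim x}u(y)\le 2u(x)+2\|f\|+2D-m$, and hence for every neighbor $z\sim x$,
\[
u(z)\le \max_{y\sim x}u(y)\le 2u(x)+2\|f\|+2D-m.
\]
Setting $h=u-m\ge 0$ and subtracting $m$ from both sides converts this into the clean one-sided recursion
\[
h(z)\le 2h(x)+2(\|f\|+D)\qquad\text{whenever }z\sim x.
\]

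Next I would pick a shortest path $z_0,z_1,\dots,z_\ell$ joining a vertex where $u$ attains its minimum (so $h(z_0)=0$) to a vertex where $u$ attains its maximum (so $h(z_\ell)=M-m$), with $\ell\le\diam(G)$ since $G$ is finite and connected. Applying the recursion step by step along this path and solving the resulting linear recurrence with $h(z_0)=0$ yields $h(z_i)\le(2^{i+1}-2)(\|f\|+D)$ by induction; in particular
\[
\max u-\min u=h(z_\ell)\le(2^{\ell+1}-2)(\|f\|+D)\le 2^{\diam(G)+1}(\|f\|+D_f(u,0)),
\]
which is the claim. I expect no serious obstacle here: the whole argument is elementary once one makes the right move. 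The one genuinely non-obvious step—and the source of the exponential factor $2^{\diam(G)+1}$—is replacing $\min_{y\sim x}u(y)$ by the crude lower bound $m$, which turns the two-term averaging operator into a recursion that doubles at each vertex of the geodesic; the rest is solving a recurrence and invoking the bound $\ell\le\diam(G)$.
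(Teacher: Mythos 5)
Your proof is correct and follows essentially the same route as the paper's: both arguments take a geodesic from a minimizer to a maximizer, bound the min-over-neighbors term by the global minimum to get the doubling recursion $u(y_{i+1})\le 2u(y_i)+2D-2f(y_i)-\min u$, and then solve the recurrence along the path (the paper does this by multiplying the $i$-th inequality by $2^{k-1-i}$ and summing, which is the same computation as your induction on $h=u-\min u$). The only cosmetic difference is that you bound $-f$ by $\|f\|$ at the outset where the paper keeps $-\min f$ until the end; both give the stated inequality.
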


\begin{proof}
Assume that the function $u$ attains its minimum and maximum at vertices $x_m$ and $x_M$ respectively.
Let $x_m=y_0, y_1, \dots , y_{k-1}, y_k = x_M$ be a path connecting $x_m$ and $x_M$ with $k \leq \diam(G)$. 
Observe that 
$$
A_{f}u(y_i) \geq \frac{u(x_m) + u(y_{i+1})}{2} + f(y_i),
$$
for $i=0,\dots,k-1$.
Estimating the left hand side of the above equations by $A_fu \leq u + D_f(u,0)$ we get
$$
u(y_{i+1}) \leq 2u(y_i) + 2D_f(u,0) -2f(y_{i}) - u(x_m),
$$
for $i=0, \dots, k-1$.
Multiplying the $i$-th inequality by $2^{k-1-i}$, for $i=0, \dots k-1$ and adding them we obtain 
\[u(x_M) - u(x_m) \leq (2^{k+1}-2)(D_f(u,0) - \min f),
\]
which implies the claim.
\end{proof}

\begin{lemma}\label{lemma: minimum} 
Under the assumptions of Lemma \ref{lemma: estimates}
the function $D_f(u,c)$ has a minimum value.
\end{lemma}

\begin{proof}
Since $D_f$ is a continuous function, we only need to prove that $\tau := \inf D_f = \inf_{\mathcal{U} \times I}D_f$, where the right hand side is the infimum of the values of $D_f$ over $\mathcal{U} \times I$ for a bounded set of functions $\mathcal{U}$ and a bounded interval $I$. First assume that $c$ is a constant large enough so that $f+c$ has all values larger than $\tau + 1$. If $x_m$ is a vertex where a function $u$ attains its minimum, we have
\[
\frac{1}{2}(\max_{y \sim x_m}u(y) + \min_{y \sim x_m}u(y)) + f(x) + c \geq u(x_m) + \tau +1.
\] 
This implies that $D_f(u,-c) \geq \tau + 1$ for any function $u$. Similarly for sufficiently large $c$ we have that  $D_f(u,c) \geq \tau + 1$ for any function $u$.
Therefore there is a bounded interval $I$ such that the infimum of values of $D(u,c)$ over all functions $u$ and $c \notin I$ is strictly bigger than $\tau$.

Furthermore by Lemma \ref{lemma: estimates} we can find a constant $K$ such that for any $c \in I$ we have that $\max u - \min u \geq K$ implies $D_f(u,c) \geq \tau+ 1$. Also since $D_f(u + \lambda,c) = D_f(u,c)$ for any $\lambda \in \mathbb{R}$, we have that $\tau = \inf_{\mathcal{U} \times I}D_f$ where $\mathcal{U}$ is the set of functions such that $\min u =0$ and $\max u \leq K$. Since the set $\mathcal{U}$ is bounded the claim follows.
\end{proof}

\begin{proof}[Proof of Theorem \ref{thm:solutions_on_general_graphs} in the case of finite graphs]
Assuming the existence of a solution the argument proceeds  as in the proof of the adjacency case. By the same argument,
to show the other direction, it is enough to prove that there is a constant $c$ for which there is a solution to \eqref{eq:equation_discrete}, when the right hand side $f$ is replaced by $f-c$, since then we necessarily have $c=0$. In other words it is enough to show that $\min D_f =0$. 
By Lemma \ref{lemma: minimum} this minimum is achieved and denote it by $m = \min D_f$. Assume that $m>0$.
Fix a pair $(u,c)$ where the minimum is achieved and define $S_{u,c}^+ :=\{x: A_{f-c}u(x) -u(x) = m\}$,  $S_{u,c}^- :=\{x: A_{f-c}u(x) - u(x) = - m\}$ and $S_{u,c}:=S_{u,c}^+ \cup S_{u,c}^-$. By definition $S_{u,c} \neq \emptyset$. If $S_{u,c}^+ = \emptyset$ then there is a $\delta>0$ small enough so that $A_{f-c+\delta}u - u < m$, and of course  $A_{f-c+\delta}u - u > -m$. This implies that $D_f(u,c-\delta) < m$,
which is a contradiction with the assumption that $m=\min D$. Therefore $S_{u,c}^+ \neq \emptyset$, and similarly  $S_{u,c}^- \neq \emptyset$.

Call a set $S_r \subset S_{u,c}^+$ \textit{removable} for function $u$, if both of the following two conditions hold:
\begin{itemize}
\item[(i)] 
For every $x \in S_{u,c}^+$ there is a $y \notin S_r$ so that $y \sim x$ and  $u(y) = \min_{z \sim x}u(z)$.
\item[(ii)] 
There are no $x \in S_{u,c}^+$ and $y \in S_r$ so that $y \sim x$ and $u(y) = \max_{z\sim x}u(z)$.
\end{itemize}
By increasing values of the function $u$ on $S_r$ we can remove this set from $S_{u,c}^+$. More precisely, define the function $\tilde{u}_{\delta}$ so that $\tilde{u}_{\delta}(x) = u(x)$ for $x \notin S_r$ and $\tilde{u}_{\delta}(x) = u(x) + \delta$ for $x \in S_r$.
Since the graph $G$ is finite, for $\delta $ small enough and all points $x \notin S_{u,c}^+$, we have $A_{f-c}\tilde{u}_{\delta}(x) - \tilde{u}_{\delta}(x) < m$. Furthermore, by the above two conditions, if $\delta$ small enough, for any point $x \in S_{u,c}^+$ we have $A_{f-c}\tilde{u}_{\delta}(x) =A_{f-c} u(x)$. On the other hand for $x \in S_r$ we have 
\[
A_{f-c}\tilde{u}_{\delta}(x) - \tilde{u}_{\delta}(x) = m - \delta,
\]
and therefore $S_{\tilde{u}_{\delta},c}^+ = S_{u,c}^+\backslash S_r$. Moreover $S_{\tilde{u}_{\delta},c}^- \subset S_{u,c}^-$ is obvious.

Similarly we can define removable sets $S_r^-$ contained in $S_{u,c}^-$ so that there are no $x \in S_{u,c}^-$ and $y \in S_r$ such that $u(y) = \min_{z\sim x}u(z)$ and that for every $x \in S_{u,c}^-$ there is a $y \notin S_r$ such that $u(y) = \max_{z \sim x}u(z)$. This set can be removed from $S_{u,c}^-$ be decreasing the value of $u$ on this set. 
Note that the removable sets in $S_{u,c}^+$ and $S_{u,c}^-$ can be removed simultaneously as described above.
Thus if a pair $(u,c)$ minimizes the value of $D_f$, and $\tilde{u}$ is obtained from $u$ by removing removable sets in $S_{u,c}^+$ and $S_{u,c}^-$,
then the pair $(\tilde{u},c)$ also minimizes the value of $D_f$, and moreover $S_{\tilde{u},c} \subset S_{u,c}$.

Call a function $u$ \textit{tight} (for $f$) if there is $c \in \mathbb{R}$ such that the pair $(u,c)$ minimizes $D_f$, and so that the set $S_{u,c}$ is of smallest cardinality, among all minimizers of $D_f$. By the discussion above, tight functions have no non-empty removable sets. For a tight function $u$ define $v =A_{f-c}u$. By Lemma \ref{lemma:lipschitz_properties} we have that $D_f(v,c)= \|A_{f-c}v -v\| \leq m$ and because $m=\min D_f$ we have $D_f(v,c) = m$.

Now observe that it is enough to prove that for any tight function $u$ and $v=A_{f-c}u$, the set $S_{v,c}^+ \backslash S_{u,c}^+$ is removable for function $v$. To see this first note that by symmetry the set $S_{v,c}^- \backslash S_{u,c}^-$ is also removable for $v$. Let $v_1$ be a function obtained by removing all these vertices as described above. In particular we have $v_1(x) = v(x) = u(x) + m$ for $x \in S_{u,c}^+$ and $v_1(x) = v(x)= u(x) - m$ for $x \in S_{u,c}^-$.
The function $v_1$ then satisfies $S_{v_1,c}^+ \subseteq S_{u,c}^+$ and $S_{v_1,c}^- \subseteq S_{u,c}^-$. By tightness of $u$ it follows that $S_{v_1,c}^+ =S_{u,c}^+$ and $S_{v_1,c}^- = S_{u,c}^-$ and thus the function $v_1$ is also tight. Now we can repeat this argument to obtain a sequence of tight functions $(v_k)$ such that $S_{v_k,c}^+ = S_{u,c}^+$, $S_{v_k,c}^- = S_{u,c}^-$, $v_k(x) = u(x) + km$ for $x \in S_{u,c}^+$ and $v_k(x) = u(x) - km$ for $x \in S_{u,c}^-$. Since $D(v_k,c) = m$ for all $k$ and $\lim_k (\max v_k - \min v_k) = \infty$, Lemma \ref{lemma: estimates} gives a contradiction with the assumption that $m > 0$. 

Thus it is only left to prove that for any tight function $u$ and $v=A_{f-c}u$ the set $S_{v,c}^+ \backslash S_{u,c}^+$ is removable for function $v$. 
For this we need to check the conditions (i) and (ii) from the definition of the removable sets. 
Take a vertex $x \in S_{v,c}^+$ and note that since $A_{f-c}v(x) -v(x) = \max(v-u)$ and $v = A_{f-c}u$, by Lemma \ref{lemma:lipschitz_properties} for a $y_1 \sim x$ such that  $u(y_1) = \min_{z \sim x}u(z)$, we have $v(y_1) = \min_{z \sim x}v(z)$ and $y_1 \in S_{u,c}^+$ which checks the first assumption.
Furthermore by Lemma \ref{lemma:lipschitz_properties} for any $y_2$ such that $v(y_2) = \max_{z \sim x}v(z)$ we have $y_2 \in S_{u,c}^+$ which also checks the second assumption in the definition of removable sets.
\end{proof}

Next we present two  examples for which we  explicitly calculate the value of the  Player I's long term advantage $c_f$.

\begin{example}\label{ex:complete graph}
If $G$ is a complete graph with loops at each vertex and $f$ a function on the set of vertices, then $c_f= (\max f + \min f)/2$. To see this, use the fact that $c_f$ defined as above satisfies $c_{f+\lambda} = c_f + \lambda$ for any $\lambda \in \mathbb{R}$,  and that  $c_f = 0$ implies that $u=f$ solves \eqref{eq:equation_discrete}. 

When $G$ is a complete graph without loops the situation becomes more complicated. If the function $f$ attains both the maximum and the minimum values at more than one vertex then again we have $c_f = (\max f+ \min f) /2$, and again in the case  $\max f + \min f = 0$ the function $u=f$ satisfies equation \eqref{eq:equation_discrete}.

If the maximum and the minimum values of $f$ are attained at unique vertices then
\begin{equation}\label{eq:complete graph shifting}
c_f = \frac{\max f +\min f}{3} + \frac{\max_2 f + \min_2 f}{6},
\end{equation}
where $\max_2 f$ and $\min_2 f$ denote the second largest and the second smallest values of the function $f$ respectively.  
To prove this assume the expression in \eqref{eq:complete graph shifting} is equal to zero, and let $x_M$ and $x_m$ be the vertices where $f$ attains the maximum and the minimum value. Then define a function $u$ so that $u(x_M) = (2\max f + \max_2 f) /3$, $u(x_m) = (2\min f + \min_2 f) /3$ and
 $u(x) = f(x)$, for $x \notin \{x_m, x_M\}$. Now using the fact that $c_f=0$ and that $u$ attains its maximum and minimum values only at $x_M$ and $x_m$ respectively, it can be checked that $u$ solves \eqref{eq:equation_discrete}.

Finally in the case when the maximum value of the function $f$ is attained at a unique vertex $x_M$ and the minimum at more than one vertex we have $c_f = (2\max f + \max_2 f + 3\min f)/6$. When this expression is equal to zero, one solution $u$ of the equation \eqref{eq:equation_discrete} is given by $u(x) = f(x)$ for $x \neq x_M$ and $u(x_M) = (2\max f + \max _2 f) /3$. Similarly when the maximum of $f$ is attained at more than one vertex and minimum at a unique vertex we have  $c_f = (2\min f + \min_2 f + 3\max f)/6$.
\end{example}

\begin{example}\label{ex:linear graph}
Consider a linear graph of length $n$ with loops at every vertex, that is take $V=\{1, \dots, n\}$ and connect two vertices if they are at Euclidean distance $0$ or $1$. Let $f$ be a non-decreasing function on the set of vertices, that is $f(i) \leq f(i+1)$, for $1 \leq i \leq n-1$. By induction and \eqref{eq:recursion}, running the game with the vanishing terminal payoff and the running payoff $f$ gives sequence of game values $(u_n)$, each of which is a non-decreasing function. Representing functions $u_n$ as column vectors, we have $u_{n+1} = Au_n + f$, where $A=(a_{ij})$ is a matrix with $a_{11}=a_{nn}=1/2$, $a_{ij}=1/2$, if $|i-j|=1$ and $a_{ij}=0$ for all other values of $i$ and $j$. Therefore $v_n=u_{n+1}-u_n$ satisfies $v_{n+1} = A v_n$. Using this we see that $v_n(x)= \mathbb{E}_x(f(X_n))$, where $X_n$ is the simple random walk on the graph with the vertex set $\{1,\dots,n\}$ where $i$ and $j$ are connected with an edge if $|i-j|=1$ and with loops at $1$ and $n$. The stationary distribution of the random walk $(X_n)$ is uniform on $\{1,\dots, n\}$, and this is the limit of the distributions of $X_n$ as $n$ tends to infinity. From here it is clear that $\lim_n v_n(x) = \Big(\sum_{i=1}^n f(i)\Big) /n$ and $c_f$ is equal to the average of the values of function $f$. 

The condition that $f$ is monotone is necessary. Consider for example the linear graph with loops and three vertices and the function $(f(1),f(2),f(3))=(-1,2,-1)$. Then by \eqref{eq:recursion} we have $u_0=0$, $u_1=f$ and $u_2 = f + 1/2$ which implies that $u_{n+1} = u_n +  1/2$ for all $n \geq 1$ and by Lemma \ref{lemma:comparison} (i) we have $c_f=1/2$. 
\end{example}

\section{The continuous case}\label{section:the continuous case}

The main goal of this section is to study the game values on Euclidean $\epsilon$-adjacency graphs, as defined in the Section 1, to obtain the existence of viscosity solutions to the equation \eqref{eq:equation_continuous}. One of the main concerns will be the dependence of the game values and limits, obtained in the previous section, on values of step sizes $\epsilon$. 
The following example shows that the issue starts already with the Player I's long term advantage $c_f(\epsilon)$ (recall that $c_f(\epsilon)$ was defined as the Player I's long term advantage for a game played on an $\epsilon$-adjacency graph with the running payoff $f$).

\begin{example}\label{example:shifting_parameters_continuous}
  This example shows that, in general, for Euclidean $\epsilon$-adjacency graphs on a domain $\Omega$ and a continuous function $f \colon \overline{\Omega} \to \mathbb{R}$, the value of $c_f(\epsilon)$ depends on  $\epsilon$. First observe a trivial fact that for any $\Omega $ of diameter $\text{diam}(\Omega)$ and $f$ we have $c_f(\diam (\Omega)) = (\max f + \min f) /2$. Next let $\Omega=(0,1)$ and let $f$ be a piecewise linear function that is linear on the intervals $[0,1/2]$ and $[1/2,1]$ and has values $f(0)=f(1/2)=1$ and $f(1)=-1$. By the above observation we have $c_f(1)=0$. However notice that from \eqref{eq:recursion} it is clear that, for any $\epsilon$, playing the game with step size $\epsilon$, the vanishing terminal payoff and the running payoff $f$, the game values will be non-increasing functions on $[0,1]$. Therefore in the game of step size $1/2$ the game values $u_n$ at points $0$, $1/2$ and $1$ are equal to the game values played on the linear graph with three vertices and loops on each vertex, with  terminal payoff  zero and running payoff equal to $1$, $1$ and $-1$ at the leftmost, central and the rightmost vertex respectively. 
Using Example \ref{ex:linear graph} and going back to the game on $[0,1]$ this implies that $c_f(1/2) = 1/3$.

For a more comprehensive example, construct a monotone function $f$ on $[0,1]$ such that no value of $(2^n+1)^{-1}\sum_{k=0}^{2^n}f(k2^{-n})$ is  attained for two distinct integers $n$. By the above reasoning and Example \ref{ex:linear graph} the value of $c_f(\epsilon)$ varies for arbitrarily small values of $\epsilon$.
\end{example}

For the remainder of this paper, all the graphs are assumed to be $\epsilon$-adjacency graphs and the dependence on $\epsilon$ will be explicitly specified.

Theorem \ref{thm:convergence_of_shifting_constants} settles the issue raised in the above example. 
We will need several technical lemmas for the proof of Theorem \ref{thm:convergence_of_shifting_constants}.
The main ingredient of the proof is a comparison between the values of discrete infinity Laplacian with different step sizes from Lemma \ref{lemma:comparison_of_steps}. The idea for (as well as one part of) this lemma came from \cite{AS}.

\begin{lemma}\label{lemma:starting_from_the_subsolution}
If  $f$ and $u$ are continuous functions on a compact length space $V$ such that $-\Delta_\infty^\epsilon u \leq f$ then $c_f(\epsilon) \geq 0$. Similarly  $-\Delta_\infty^\epsilon u \geq f$ implies $c_f(\epsilon) \leq 0$.
\end{lemma}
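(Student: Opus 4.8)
The plan is to reinterpret the hypothesis $-\Delta_\infty^\epsilon u \le f$ as the statement that $u$ is a \emph{subsolution} of the game operator on the $\epsilon$-adjacency graph, and then exploit the monotonicity of the game dynamics (Lemma~\ref{lemma:comparison}) together with the existence of the long-term advantage (Theorem~\ref{thm:first_result}). First I would unwind the definition of $\Delta_\infty^\epsilon$. On the $\epsilon$-adjacency graph one has $y \sim x$ exactly when $y \in B(x,\epsilon)$, so the inequality $-\Delta_\infty^\epsilon u(x) \le f(x)$ is, after multiplying by $\epsilon^2/2$ and rearranging, equivalent to
\[
u(x) \le \tfrac12\bigl(\min_{B(x,\epsilon)}u + \max_{B(x,\epsilon)}u\bigr) + \tfrac{\epsilon^2}{2}f(x) = A_{g}u(x), \qquad g := \tfrac{\epsilon^2}{2}f,
\]
where $A_g$ is the operator from Lemma~\ref{lemma:lipschitz_properties}. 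In other words, $u \le A_g u$ pointwise. Since $V$ is a compact length space the $\epsilon$-adjacency graph is connected and of finite diameter, and the game values remain continuous (Remark~\ref{rem:continuity}), so Theorem~\ref{thm:first_result} applies to the running payoff $g$ and produces the long-term advantage $c_g = c_{\epsilon^2 f/2}(\epsilon)$, with $(u_n - n c_g)$ bounded for any terminal payoff.

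Next I would run the game with running payoff $g$ and the specific terminal payoff $u_0 := u$. Then $u_1 = A_g u_0 = A_g u \ge u = u_0$, and by the monotonicity in part (i) of Lemma~\ref{lemma:comparison} (comparing the game started from $u_0$ with the same game started from $u_1$, whose horizon-$n$ values are exactly $u_{n+1}$) the sequence $(u_n)$ is non-decreasing; in particular $u_n(x) \ge u(x)$ for every $x$ and every $n$. Because $(u_n(x) - n c_g)$ is bounded, dividing by $n$ gives $c_g = \lim_n u_n(x)/n \ge \lim_n u(x)/n = 0$, so $c_g \ge 0$. Finally, since $g$ is a positive multiple of $f$, the scaling relation $c_{\lambda f}(\epsilon) = \lambda c_f(\epsilon)$ for $\lambda > 0$ (Remark~\ref{rem:shifting constant scaling}) gives $c_g = \tfrac{\epsilon^2}{2}c_f(\epsilon)$, whence $c_f(\epsilon) \ge 0$. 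The second assertion follows by applying the first to $-u$ and $-f$, using $-\Delta_\infty^\epsilon(-u) = \Delta_\infty^\epsilon u$ and $c_{-f}(\epsilon) = -c_f(\epsilon)$.

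The proof is short, and the points requiring care are bookkeeping rather than genuine obstacles: correctly carrying the normalizing factor $\epsilon^2/2$ between the $\epsilon$-discrete Laplacian and the raw game payoff $f$, and justifying the passage $c_g = \lim_n u_n(x)/n$ from the boundedness statement of Theorem~\ref{thm:first_result}. The conceptual heart is simply the observation that a subsolution of $A_g$ seeds a monotone non-decreasing orbit under the game recursion, which forces Player~I's long-term advantage to be nonnegative.
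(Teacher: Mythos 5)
Your proposal is correct and follows essentially the same route as the paper: rewrite $-\Delta_\infty^\epsilon u \leq f$ as $u \leq A_g u$ with $g=\epsilon^2 f/2$, run the game with terminal payoff $u$, deduce $u_{n+1}\geq u_n$ by monotonicity, and conclude $c_f(\epsilon)\geq 0$ from Theorem~\ref{thm:first_result}. The only differences are cosmetic --- you invoke Lemma~\ref{lemma:comparison}(i) where the paper cites Lemma~\ref{lemma:lipschitz_properties}, and you spell out the $\epsilon^2/2$ rescaling via $c_{\lambda f}=\lambda c_f$, which the paper leaves implicit.
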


\begin{proof}
The second claim follows by replacing $u$ and $f$ by $-u$ and $-f$ respectively, so it is enough to prove the first one.
The condition $-\Delta_\infty^\epsilon u \leq f$ can be rewritten as 
\[
\frac{1}{2}\Big(\max_{z \sim x}u(z)+ \min_{z \sim x}u(z)\Big) + \frac{\epsilon^2}{2} f(x) \geq u(x).
\]
Thus the game value $u_1$ of the first step of the game, played with the terminal payoff $u_0=u$, running payoff $\epsilon^2 f/2$ and step sizes $\epsilon$ satisfies $u_1 \geq u_0$. By  Lemma \ref{lemma:lipschitz_properties} we have $u_{n+1} \geq u_n$ for any $n$, hence $c_f(\epsilon) \geq 0$ is clear.
\end{proof}

\begin{lemma}\label{lemma:continuity_of_shiftings_in_step_sizes}
Mapping $\epsilon \mapsto c_f(\epsilon)$ is continuous on $\mathbb{R}^+$.
\end{lemma}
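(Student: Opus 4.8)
<br>

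The statement to prove is Lemma~\ref{lemma:continuity_of_shiftings_in_step_sizes}: the map $\epsilon \mapsto c_f(\epsilon)$ is continuous on $\mathbb{R}^+$.

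The plan is to prove continuity by controlling how much the discrete infinity Laplacian $\Delta_\infty^{\epsilon}$ can change when $\epsilon$ is perturbed, and then feed this into Lemma~\ref{lemma:starting_from_the_subsolution}. The key observation is that $c_f(\epsilon)$ has a variational/comparison characterization: if $-\Delta_\infty^{\epsilon} u \leq f$ for some continuous $u$, then $c_f(\epsilon) \geq 0$, and the reverse inequality gives $c_f(\epsilon) \leq 0$. Combined with the scaling properties from Remark~\ref{rem:shifting constant scaling} ($c_{f+\lambda}(\epsilon) = c_f(\epsilon) + \lambda$), this lets us translate bounds on the operator into bounds on the shifting constant. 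Concretely, to show $c_f$ is continuous at a fixed $\epsilon_0$, I would take a near-optimal function for the game at scale $\epsilon_0$ — for instance a function $u$ realizing $-\Delta_\infty^{\epsilon_0} u = f - c_f(\epsilon_0)$ approximately, or the game values $u_n$ with the shift built in — and estimate the quantity $-\Delta_\infty^{\epsilon} u$ for $\epsilon$ close to $\epsilon_0$.

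The heart of the argument is the estimate
\[
\bigl| \max_{B(x,\epsilon)} u + \min_{B(x,\epsilon)} u - \max_{B(x,\epsilon_0)} u - \min_{B(x,\epsilon_0)} u \bigr| \leq 2\,\osc(u, |\epsilon - \epsilon_0|),
\]
which follows because any point of $B(x,\epsilon)$ lies within distance $|\epsilon-\epsilon_0|$ of some point of $B(x,\epsilon_0)$ and vice versa (the same ``nearby ball'' reasoning used in Remark~\ref{rem:continuity}). Since $u$ is uniformly continuous on the compact space $V$, the oscillation term tends to $0$ as $\epsilon \to \epsilon_0$; but one must be careful that the relevant $u$ has a modulus of continuity controlled uniformly, which is where the Lipschitz-type bound from Theorem~\ref{thm:general_continuous_solutions} (Lipschitz constant bounded by a universal multiple of $\diam(V)\|f\|$) is useful. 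Using this estimate together with the $\epsilon^{-2}$ normalization in $\Delta_\infty^{\epsilon}$, I would obtain, for $u$ an (approximate) solution at scale $\epsilon_0$,
\[
-\Delta_\infty^{\epsilon} u \leq f - c_f(\epsilon_0) + \eta(\epsilon),
\]
where $\eta(\epsilon) \to 0$ as $\epsilon \to \epsilon_0$ and depends on the oscillation and on the difference of the $\epsilon^{-2}$ factors. Applying Lemma~\ref{lemma:starting_from_the_subsolution} to $f - c_f(\epsilon_0) + \eta(\epsilon)$ then yields $c_{f - c_f(\epsilon_0) + \eta(\epsilon)}(\epsilon) \geq 0$, i.e.\ $c_f(\epsilon) \geq c_f(\epsilon_0) - \eta(\epsilon)$, and the symmetric construction (supersolution) gives $c_f(\epsilon) \leq c_f(\epsilon_0) + \eta(\epsilon)$, whence continuity.

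The main obstacle I anticipate is the $\epsilon^{-2}$ normalization: since $\Delta_\infty^{\epsilon}$ divides by $\epsilon^2$, a small oscillation error gets amplified, so the naive bound $\osc(u,|\epsilon-\epsilon_0|)/\epsilon^2$ need not be small. The resolution requires a sharper estimate exploiting that for the near-optimal $u$ the extremal points of $\max_{B(x,\epsilon_0)} u$ and $\min_{B(x,\epsilon_0)} u$ sit essentially on the boundary sphere $d(\cdot,x)=\epsilon_0$ (for infinity-harmonic-type functions the extrema are attained at maximal distance), so that perturbing the radius from $\epsilon_0$ to $\epsilon$ changes the extremal values by an amount comparable to the \emph{Lipschitz constant times} $|\epsilon-\epsilon_0|$ rather than an uncontrolled oscillation, giving a change of order $L\,|\epsilon-\epsilon_0|/\epsilon^2$ which is small for $\epsilon$ near a fixed $\epsilon_0 > 0$. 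Away from $\epsilon_0 = 0$ this is harmless, and since we only claim continuity on $\mathbb{R}^+$ (not at $0$), this is exactly the regime we need; the genuinely delicate limit $\epsilon \downarrow 0$ is handled separately in Theorem~\ref{thm:convergence_of_shifting_constants}.
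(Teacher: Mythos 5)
Your proposal is correct and follows essentially the same route as the paper: take the exact solution $u_{\epsilon_0}$ of $-\Delta_\infty^{\epsilon_0}u_{\epsilon_0}=f-c_f(\epsilon_0)$ (which exists by Theorem~\ref{thm:solutions_on_general_graphs}), observe that $\epsilon\mapsto-\Delta_\infty^{\epsilon}u_{\epsilon_0}$ is continuous in the sup norm via the nearby-ball oscillation estimate, and feed the resulting perturbed inequalities into Lemma~\ref{lemma:starting_from_the_subsolution}. The ``obstacle'' you raise about the $\epsilon^{-2}$ normalization is not actually there: for a \emph{fixed} $\epsilon_0>0$ and the \emph{fixed} uniformly continuous function $u_{\epsilon_0}$, the numerator $\osc(u_{\epsilon_0},|\epsilon-\epsilon_0|)$ tends to $0$ while the denominator stays bounded away from $0$, so no Lipschitz bound from Theorem~\ref{thm:general_continuous_solutions} and no localization of extrema on the sphere $d(\cdot,x)=\epsilon_0$ is needed.
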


\begin{proof}
For a given $\epsilon$ let $u_\epsilon \in C(V)$ be a solution of $-\Delta_\infty^\epsilon u_\epsilon = f - c_f(\epsilon)$, which exists by Theorem \ref{thm:solutions_on_general_graphs} and Remarks \ref{rem:two_laplacians} and \ref{rem:shifting constant scaling}. 
Since for any $u \in C(V)$, it holds that $\epsilon \mapsto -\Delta_\infty^\epsilon u$ is a continuous function from $\mathbb{R}^+$ to $(C(V),\|\cdot \|_\infty)$, so
for a fixed $\epsilon$ and any $\delta>0$ we can find $\eta>0$ such that $|-\Delta_\infty^{\epsilon_1} u_{\epsilon}| \leq f-c_f(\epsilon) + \delta$ whenever $|\epsilon_1 - \epsilon| \leq \eta$. Now by applying Lemma \ref{lemma:starting_from_the_subsolution} we see that for such $\epsilon_1$ we have $|c_f(\epsilon_1) -c_f(\epsilon)| \leq \delta$, which gives the continuity.
\end{proof}

As mentioned above, the main part of the proof of Theorem \ref{thm:convergence_of_shifting_constants} is contained in the following lemma. For Euclidean $\epsilon$-adjacency graphs, the first inequality in \eqref{eq:comparison_of_steps_1} already appeared as Lemma 4.1 in \cite{AS}. However since their definition of the discrete Laplacian was somewhat different close to the boundary $\partial \Omega$, their estimates held only away from $\partial \Omega$. This issue does not appear in our case and their proof goes verbatim. For reader's convenience we repeat their proof of the first inequality in \eqref{eq:comparison_of_steps_1}. 

For a function $u \colon V \to \mathbb{R}$ we first define $\overline{u}^\epsilon= \max_{z \in B(x,\epsilon)}u(z)$ and $\underline{u}_\epsilon = \min_{z \in B(x,\epsilon)}u(z)$. Furthermore define $T_\epsilon^+u(x) = \overline{u}^\epsilon(x) - u(x)$ and $T_\epsilon^-u(x) = u(x) - \underline{u}_\epsilon(x)$ (this corresponds to $\epsilon S_\epsilon^+$ and $\epsilon S_\epsilon^-$ in \cite{AS}).
 Now we can write $-\Delta_\infty^\epsilon u = (T_\epsilon^-u - T_\epsilon^+u)/\epsilon^2$.

\begin{lemma}\label{lemma:comparison_of_steps}
Suppose that $u \in C(V)$ satisfies $-\Delta_\infty^\epsilon u \leq f_1$ for some $f_1\in C(V)$. Then we have 
\begin{equation}\label{eq:comparison_of_steps_1}
-\Delta_\infty^{2\epsilon} \overline{u}^\epsilon \leq \overline{f_1}^{2\epsilon} \text{ and } -\Delta_\infty^{\epsilon} \overline{u}^\epsilon \leq \overline{f_1}^\epsilon.
\end{equation}
If, in addition, we have $-\Delta_\infty^{2\epsilon} u \leq f_2$ for some $f_2 \in C(V)$ then
\begin{equation}\label{eq:comparison_of_steps_2}
-\Delta_\infty^{3\epsilon} \overline{u}^\epsilon \leq (8\overline{f_2}^{2\epsilon}  + \overline{f_1}^{\epsilon})/9.
\end{equation}
\end{lemma}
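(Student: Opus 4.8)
The plan is to follow the scheme of \cite{AS}, rewriting each inequality $-\Delta_\infty^\delta w \le h$ in the elementary ``slope'' form $T_\delta^- w(x) \le T_\delta^+ w(x) + \delta^2 h(x)$, i.e. $w(x) - \underline{w}_\delta(x) \le \overline{w}^\delta(x) - w(x) + \delta^2 h(x)$, and to transport this inequality from $u$ (at radius $\epsilon$, resp.\ $2\epsilon$) to $v := \overline{u}^\epsilon$ (at radius $\epsilon$, $2\epsilon$, $3\epsilon$). The geometric engine is that a compact length space is geodesic (Hopf--Rinow), so midpoints and intermediate points along geodesics exist; in particular one has the iterated-maximum identity $\overline{(\overline{u}^\epsilon)}^{s} = \overline{u}^{\epsilon+s}$, since $d(x,y)\le s$ and $d(y,z)\le\epsilon$ force $d(x,z)\le \epsilon+s$, while conversely any $z$ with $d(x,z)\le\epsilon+s$ admits an intermediate geodesic point $y$ with $d(x,y)\le s$ and $d(y,z)\le\epsilon$. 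This controls all of the maximum terms exactly.

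For the first inequality in \eqref{eq:comparison_of_steps_1} I fix $x$ and pick a maximizer $x_0\in B(x,\epsilon)$ with $v(x)=u(x_0)$. The maximum term is handled by the identity above together with $B(x_0,\epsilon)\subseteq B(x,2\epsilon)$: one gets $\overline{v}^{2\epsilon}(x)=\overline{u}^{3\epsilon}(x)\ge \overline{u}^\epsilon(x_0)$, hence $T_{2\epsilon}^+v(x)\ge T_\epsilon^+u(x_0)$. The minimum term is the delicate point: one must show $\underline{v}_{2\epsilon}(x)\ge \underline{u}_\epsilon(x_0)$, i.e.\ $\overline{u}^\epsilon(y)\ge \underline{u}_\epsilon(x_0)$ for every $y\in B(x,2\epsilon)$; when $d(x_0,y)\le 2\epsilon$ a geodesic point in $B(x_0,\epsilon)\cap B(y,\epsilon)$ does the job, and the remaining configurations are exactly where the argument of \cite{AS} (which I reproduce verbatim) is needed. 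Feeding these two estimates into the slope inequality for $u$ at $x_0$ and using $f_1(x_0)\le \overline{f_1}^{2\epsilon}(x)$ together with $\epsilon^2\le(2\epsilon)^2$ yields $T_{2\epsilon}^-v(x)\le T_{2\epsilon}^+v(x)+(2\epsilon)^2\overline{f_1}^{2\epsilon}(x)$, which is the claim. The second inequality in \eqref{eq:comparison_of_steps_1} is the same argument run at the single scale $\epsilon$ (replace $2\epsilon$ by $\epsilon$ throughout, so that $\overline{v}^\epsilon=\overline{u}^{2\epsilon}$ and one transfers from $x_0\in B(x,\epsilon)$ to $B(x,\epsilon)$); it is the borderline case of the same transfer and requires no new idea.

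The genuinely new estimate is \eqref{eq:comparison_of_steps_2}, and this is where I expect the main difficulty. The guiding identity is the scale arithmetic $(3\epsilon)^2 = 2\,(2\epsilon)^2 + \epsilon^2$, i.e.\ $9=2\cdot4+1$, which dictates the weights $8/9$ and $1/9$: after clearing the normalizing $9\epsilon^2$, the target reads $9\epsilon^2\big(-\Delta_\infty^{3\epsilon}v\big)(x)\le 8\epsilon^2\,\overline{f_2}^{2\epsilon}(x)+\epsilon^2\,\overline{f_1}^{\epsilon}(x)$, so the radius-$3\epsilon$ second difference of $v$ must be split as twice a radius-$2\epsilon$ contribution (charged to the hypothesis $-\Delta_\infty^{2\epsilon}u\le f_2$, producing the $\overline{f_2}^{2\epsilon}$ term) plus one radius-$\epsilon$ contribution (charged to the second inequality of \eqref{eq:comparison_of_steps_1}, producing the $\overline{f_1}^{\epsilon}$ term). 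Concretely I would again start from the maximizer $x_0$ and split the geodesic realizing the radius-$3\epsilon$ extrema into a length-$2\epsilon$ arc and a length-$\epsilon$ arc, applying the $f_2$-slope inequality along the first and the already proven $f_1$-slope inequality along the second, then recombining with the weights above.

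The principal obstacle is, once more, the minimum term: one must simultaneously locate where $\underline{v}_{3\epsilon}(x)$ and the intermediate extrema are attained and arrange the geodesic split so that both hypotheses apply to matching sub-balls. This is the compounding of the delicate ``$\underline{v}\ge\underline{u}$'' step already met in \eqref{eq:comparison_of_steps_1}, and carrying it through two chained scales while keeping the error exactly $8\overline{f_2}^{2\epsilon}+\overline{f_1}^{\epsilon}$ (rather than a cruder averaged bound) is the technical heart of the proof.
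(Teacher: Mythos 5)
There is a genuine gap, in two places. First, your route to the first inequality of \eqref{eq:comparison_of_steps_1} hinges on the claim $\underline{(\overline{u}^\epsilon)}_{2\epsilon}(x)\ \geq\ \underline{u}_\epsilon(x_0)$, where $x_0\in B(x,\epsilon)$ is a maximizer of $u$. This is false in general: for $y\in B(x,2\epsilon)$ one only has $d(x_0,y)\leq 3\epsilon$, so $B(x_0,\epsilon)$ and $B(y,\epsilon)$ need not meet (take $V=[0,10]$, $\epsilon=1$, $u(t)=-t$, $x=1$, $x_0=0$, $y=3$: then $\overline{u}^{\epsilon}(y)=-2<-1=\underline{u}_\epsilon(x_0)$). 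The ``remaining configurations'' you defer to \cite{AS} cannot be handled, because the inequality itself fails there; the actual argument (reproduced in the paper) is structured differently. It bounds the min-side by the \emph{easy} inequality $\underline{(\overline{u}^\epsilon)}_{2\epsilon}(x)\geq\underline{u}_\epsilon(x)$ (centered at $x$, where the midpoint argument always applies), so that $T_{2\epsilon}^-\overline{u}^\epsilon(x)\leq T_\epsilon^+u(x)+T_\epsilon^-u(x)$, and then matches this on the max-side by a two-step chain through $y_1$ (your $x_0$) and a further maximizer $y_2\in B(y_1,\epsilon)$, using the hypothesis $-\Delta_\infty^\epsilon u\leq f_1$ at the three points $x$, $y_1$, $y_2$ to get $T_{2\epsilon}^+\overline{u}^\epsilon(x)\geq T_\epsilon^+u(x)+T_\epsilon^-u(x)-\epsilon^2\bigl(f_1(y_2)+2f_1(y_1)+f_1(x)\bigr)$; the total error $4\epsilon^2\overline{f_1}^{2\epsilon}(x)=(2\epsilon)^2\overline{f_1}^{2\epsilon}(x)$ is exactly what the normalization absorbs. (Your single-point transfer does work for the \emph{second} inequality of \eqref{eq:comparison_of_steps_1}, at equal scales, which is why that case is genuinely easier.)

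Second, for \eqref{eq:comparison_of_steps_2} you correctly read off the weights from $9=2\cdot 4+1$, but you explicitly leave the ``technical heart'' undone, and the missing piece is precisely the content of the proof. The paper's mechanism is: let $z_M\in B(x,2\epsilon)$ attain $\overline{u}^{2\epsilon}(x)$ and $y_1$ attain $\overline{u}^\epsilon(x)$; then
\[
T_{3\epsilon}^+\overline{u}^\epsilon(x)\ \geq\ T_{2\epsilon}^+u(z_M)+T_\epsilon^+u(x)-\epsilon^2 f_1(y_1),
\qquad
T_{3\epsilon}^-\overline{u}^\epsilon(x)\ \leq\ T_\epsilon^+u(x)+T_{2\epsilon}^-u(z_M)+(2\epsilon)^2f_2(x),
\]
where the min-side again uses the easy bound $\underline{(\overline{u}^\epsilon)}_{3\epsilon}(x)\geq\underline{u}_{2\epsilon}(x)$ centered at $x$ together with the $2\epsilon$-slope inequality at $x$ and $T_{2\epsilon}^+u(x)\leq T_{2\epsilon}^-u(z_M)$. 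Subtracting, the $T_\epsilon^+u(x)$ terms cancel and $T_{2\epsilon}^-u(z_M)-T_{2\epsilon}^+u(z_M)\leq(2\epsilon)^2f_2(z_M)$, yielding the two $f_2$-contributions (at $x$ and at $z_M$, giving $8/9$) and one $f_1$-contribution (at $y_1$, giving $1/9$). Without identifying $z_M$ and these two cancellations, the weights cannot be realized; so as written the proposal does not constitute a proof of either part of the lemma.
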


\begin{proof}
In the proof we will repeatedly use the following arguments. If $z_0,z_1 \in V$ are such that $z_1 \in B(z_0,\delta)$ and $v(z_1) = \overline{v}^\delta(z_0)$ then we have
\begin{equation}
  \label{eq:comparison_of_steps_help_1}
  T_\delta^+v(z_0) = v(z_1) - v(z_0) \leq T_\delta^-v(z_1).
\end{equation}
Furthermore the assumption $-\Delta_\infty^\delta v \leq f$ implies that 
\begin{equation}
  \label{eq:comparison_of_steps_help_2}
  T_\delta^+v(z_0) \leq T_\delta^-v(z_1) \leq T_\delta^+v(z_1) + \delta^2 f(z_1).
\end{equation}

Denote points $y_1 \in B(x,\epsilon)$, $y_2 \in B(y_1,\epsilon)$, $z_M \in B(x,2\epsilon)$ and $z_m \in B(x,2\epsilon)$ so that
$u(y_1) = \overline{u}^\epsilon(x)$, $u(y_2) = \overline{u}^\epsilon(y_1)$, $u(z_M) = \overline{u}^{2\epsilon}(x)$ and $u(z_m) = \underline{u}_{2\epsilon}(x)$. We calculate


\begin{align}\label{eq:comparison_of_steps_Armstrong_Smart_+}
T_{2\epsilon}^+\overline{u}^\epsilon(x)  & = \overline{u}^{3\epsilon}(x) - \overline{u}^\epsilon(x) \nonumber \\ \nonumber
& = (\overline{u}^{3\epsilon}(x) - u(y_2)) + (u(y_2) - u(y_1)) \\ \nonumber
& \geq T_\epsilon^+u(y_2) + T_\epsilon^+u(y_1) \\ \nonumber
& \geq 2 T_\epsilon^+u(y_1) - \epsilon^2f_1(y_2) \\ \nonumber
& \geq 2 T_\epsilon^+u(x) - \epsilon^2(f_1(y_2)+2f_1(y_1)) \\ 
& \geq T_\epsilon^+u(x) + T_\epsilon^-u(x) - \epsilon^2(f_1(y_2)+2f_1(y_1)+f_1(x)).
\end{align}
In the first inequality we used the fact that $B(y_2,\epsilon) \subset B(x,3\epsilon)$ and
in the second inequality we used \eqref{eq:comparison_of_steps_help_2} with $z_0 = y_1$, $z_1=y_2$ and $\delta=\epsilon$. In the next line we again used \eqref{eq:comparison_of_steps_help_2} with $z_0=x$, $z_1=y_1$ and $\delta=\epsilon$, and in the last line the assumption $-\Delta_\infty^\epsilon u \leq f_1$.

Furthermore we have
\begin{align}\label{eq:comparison_of_steps_Armstrong_Smart_-}
T_{2\epsilon}^- \overline{u}^\epsilon (x) & = \overline{u}^\epsilon(x) - \min_{B(x,2\epsilon)}\overline{u}^\epsilon \nonumber \\ \nonumber
& \leq (\overline{u}^\epsilon(x) - u(x)) + (u(x) - \underline{u}_\epsilon(x)) \\ 
& = T_\epsilon^+u(x) + T_\epsilon^-u(x).
\end{align}
The inequality above follows from the fact that for every $z \in B(x,2\epsilon)$ we have $\max_{B(z,\epsilon)} u \geq \min_{B(x,\epsilon)}u$.
Now the first inequality in \eqref{eq:comparison_of_steps_1} is obtained by subtracting \eqref{eq:comparison_of_steps_Armstrong_Smart_+} from \eqref{eq:comparison_of_steps_Armstrong_Smart_-}.

For the second inequality in \eqref{eq:comparison_of_steps_1} note that 
\[
T_\epsilon^+\overline{u}^\epsilon(x) = \overline{u}^{2\epsilon}(x) - u(y_1) \geq T_\epsilon^+u(y_1),
\]
and
\[
T_\epsilon^- \overline{u}^\epsilon(x) = \overline{u}^\epsilon(x) - \min_{B(x,\epsilon)}\overline{u}^\epsilon  \leq u(y_1)-u(x) \leq T_\epsilon^-u(y_1).
\]
Subtracting the above inequalities it follows that $-\Delta_\infty^\epsilon \overline{u}^\epsilon(x) \leq -\Delta_\infty^\epsilon u(y_1) \leq f_1(y_1) \leq \overline{f}_1^\epsilon(x)$.


We prove the inequality \eqref{eq:comparison_of_steps_2} similarly.
First we calculate
\begin{align*}
T_{3\epsilon}^+\overline{u}^\epsilon(x) & = \overline{u}^{4\epsilon}(x) - \overline{u}^\epsilon(x) \\
& = (\overline{u}^{4\epsilon}(x) - u(z_M)) + (u(z_M)-u(y_2)) + (u(y_2)-u(y_1)) \\
& \geq T_{2\epsilon}^+u(z_M) + T_\epsilon^+u(y_1) \\
& \geq T_{2\epsilon}^+u(z_M) + T_\epsilon^-u(y_1) - \epsilon^2 f_1(y_1) \\
& \geq T_{2\epsilon}^+u(z_M) + T_\epsilon^+u(x) - \epsilon^2 f_1(y_1).
\end{align*}
In the third line we used the fact that $y_2 \in B(x,2\epsilon)$ which implies that $u(y_2) \leq u(z_M)$, in the fourth line the assumption and in the last line \eqref{eq:comparison_of_steps_help_1}.

Using similar arguments again we have
\begin{align*}
T_{3\epsilon}^-\overline{u}^\epsilon(x) & =  \overline{u}^\epsilon(x) - \min_{B(x,3\epsilon)} \overline{u}^\epsilon \\
& \leq \overline{u}^\epsilon(x) - \underline{u}_{2\epsilon}(x) \\
& = (u(y_1) -u(x)) + (u(x) - u(z_m))\\
& = T_\epsilon^+u(x) + T_{2\epsilon}^-u(x)\\
& \leq  T_\epsilon^+u(x) + T_{2\epsilon}^+u(x) + (2\epsilon)^2f_2(x) \\
& \leq  T_\epsilon^+u(x) + T_{2\epsilon}^-u(z_M) + (2\epsilon)^2f_2(x).
\end{align*}

Now subtracting the above calculations and dividing by $(3\epsilon)^2$, we obtain 
\[
-\Delta_\infty^{3\epsilon} \overline{u}^\epsilon (x)\leq (T_{2\epsilon}^-u(z_M) - T_{2\epsilon}^+u(z_M))/(9\epsilon^2) + 4f_2(x)/9 + f_1(y_1)/9,
\]
from where the fact follows directly.
\end{proof}

Recall the notation  $\osc(f,\delta) = \sup_{d(x,y) \leq \delta}|f(x)-f(y)|$.

\begin{proposition}\label{prop: shifting constant estimates}
For any $f \in C(V)$, $\epsilon > 0$ and any positive integer $n$ we have
\begin{equation}\label{eq: shifting constant estimates}
\max\{|c_f(\epsilon 2^{-n}) - c_f(\epsilon)|, |c_f(\epsilon 3^{-n}) - c_f(\epsilon)| \} \leq \osc(f,2\epsilon).
\end{equation}
\end{proposition}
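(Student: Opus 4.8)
The plan is to deduce both claimed inequalities from the comparison Lemma \ref{lemma:comparison_of_steps} by iterating it, while tracking how the radius of the sup-convolution appearing on the right-hand side accumulates; the whole point will be that this accumulated radius stays below $2\epsilon$ no matter how large $n$ is. I will repeatedly use the identity $\overline{(\overline{g}^a)}^b = \overline{g}^{a+b}$, which holds because $V$ is a length space, together with $\overline{g}^r \le g + \osc(g,r)$ and the fact that an additive constant passes unchanged through every sup-convolution. By Theorem \ref{thm:solutions_on_general_graphs} (and Remarks \ref{rem:two_laplacians}, \ref{rem:shifting constant scaling}) I may start from an exact solution, and Lemma \ref{lemma:starting_from_the_subsolution} together with $c_{f+\lambda}=c_f+\lambda$ will convert the final differential inequality into a bound on $c_f$.

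For the dyadic case, let $\epsilon_0 = \epsilon 2^{-n}$, take $u$ with $-\Delta_\infty^{\epsilon_0} u = f - c_f(\epsilon_0)$, and write $c=c_f(\epsilon_0)$. I would iterate the first inequality of \eqref{eq:comparison_of_steps_1} $n$ times, doubling the scale each time: if at scale $s$ a function $w$ satisfies $-\Delta_\infty^s w \le \overline{f}^{a} - c$, then \eqref{eq:comparison_of_steps_1} applied with $f_1 = \overline{f}^a - c$ yields $-\Delta_\infty^{2s}\overline{w}^s \le \overline{f}^{a+2s} - c$, so the scale doubles and the radius grows by $2s$. Starting from $w=u$, $s=\epsilon_0$, $a=0$, after $n$ steps the scale is $2^n\epsilon_0=\epsilon$ and the radius is $\sum_{k=0}^{n-1}2\cdot 2^k\epsilon_0 = 2\epsilon_0(2^n-1)=2\epsilon-2\epsilon_0<2\epsilon$, so the resulting function $w^{(n)}$ obeys
\[
-\Delta_\infty^{\epsilon} w^{(n)} \le \overline{f}^{2\epsilon} - c \le f + \osc(f,2\epsilon) - c_f(\epsilon 2^{-n}).
\]
Lemma \ref{lemma:starting_from_the_subsolution} then gives $c_f(\epsilon) + \osc(f,2\epsilon) - c_f(\epsilon 2^{-n}) \ge 0$, i.e. $c_f(\epsilon 2^{-n}) - c_f(\epsilon) \le \osc(f,2\epsilon)$. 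Running the same argument with $(u,f)$ replaced by $(-u,-f)$ turns the sup-convolutions into inf-convolutions $\underline{\cdot}$, reverses the inequalities, and uses $\underline{f}_r \ge f - \osc(f,r)$ with the supersolution half of Lemma \ref{lemma:starting_from_the_subsolution}; this yields the reverse bound, hence $|c_f(\epsilon 2^{-n}) - c_f(\epsilon)| \le \osc(f,2\epsilon)$.

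The ternary case proceeds identically in spirit, using \eqref{eq:comparison_of_steps_2} to triple the scale, but now one application needs control at two consecutive scales of the \emph{same} function. The induction step I would use is: from $w$ with $-\Delta_\infty^s w \le \overline{f}^a - c$, first pass to $\tilde w = \overline{w}^s$, which by the two inequalities in \eqref{eq:comparison_of_steps_1} satisfies both $-\Delta_\infty^s \tilde w \le \overline{f}^{a+s}-c$ and $-\Delta_\infty^{2s}\tilde w \le \overline{f}^{a+2s}-c$; feeding these as $f_1,f_2$ into \eqref{eq:comparison_of_steps_2} gives $\overline{\tilde w}^s = \overline{w}^{2s}$ with $-\Delta_\infty^{3s}\overline{w}^{2s} \le (8\overline{f}^{a+4s}+\overline{f}^{a+2s})/9 - c \le \overline{f}^{a+4s}-c$. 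Thus tripling the scale increases the radius by $4s$, and starting from $a=0$, $s=\epsilon_0=\epsilon 3^{-n}$ the total radius after $n$ triplings is $\sum_{k=0}^{n-1}4\cdot 3^k\epsilon_0 = 2\epsilon_0(3^n-1)=2\epsilon-2\epsilon_0<2\epsilon$, with final scale $3^n\epsilon_0=\epsilon$; the conclusion then follows exactly as before.

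The main obstacle is the bookkeeping in the ternary step: since \eqref{eq:comparison_of_steps_2} demands simultaneous bounds at scales $s$ and $2s$ for one function, the tripling inequality cannot be chained naively, and the intermediate passage to $\tilde w = \overline{w}^s$—which manufactures both required bounds at once via \eqref{eq:comparison_of_steps_1}—is precisely what closes the induction. Once this is arranged, the only quantitative content is the geometric-series computation showing the accumulated radius equals $2\epsilon_0(r^n-1)=2\epsilon-2\epsilon_0<2\epsilon$ for $r\in\{2,3\}$, which is exactly why the right-hand bound $\osc(f,2\epsilon)$ is uniform in $n$.
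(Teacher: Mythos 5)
Your proof is correct and follows essentially the same route as the paper: both iterate Lemma \ref{lemma:comparison_of_steps} to double (resp.\ triple) the scale, track the accumulated sup-convolution radius as the geometric series $2\epsilon(1-r^{-n})<2\epsilon$ for $r\in\{2,3\}$, and convert back to a bound on $c_f$ via Lemma \ref{lemma:starting_from_the_subsolution} and $c_{f+\lambda}=c_f+\lambda$; in particular your handling of the ternary step, feeding both inequalities of \eqref{eq:comparison_of_steps_1} into \eqref{eq:comparison_of_steps_2} to get $-\Delta_\infty^{3s}\overline{w}^{2s}\le\overline{f}^{a+4s}-c$, is exactly the paper's derivation of $-\Delta_\infty^{3\delta}\overline{u}^{2\delta}\le\overline{f}^{4\delta}-c_f(\delta)$. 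The only (immaterial) difference is that the paper restates the one-step estimate as $c_f(\delta)\le c_{\overline{f}^{2\delta}}(2\delta)$ and iterates at the level of the constants, taking a fresh exact solution at each scale, whereas you carry a single chain of sup-convolutions of one solution and invoke Lemma \ref{lemma:starting_from_the_subsolution} only once at the end.
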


\begin{proof}

First note that the functions $\overline{f}^{r}$ and $\underline{f}_{r}$ differ from $f$ by at most $\osc(f,r)$ at any point, which easily implies 
\begin{equation}\label{eq:expanding for shifting constants}
\max\{|c_f(\rho) -c_{\overline{f}^{r}}(\rho)|, |c_f(\rho) -c_{\underline{f}_{r}}(\rho)|\} \leq \osc(f,r),
\end{equation} 
for any $\rho$.

Taking $u$ to be a continuous function such that $-\Delta_\infty^\delta u = f - c_f(\delta)$, by Lemma \ref{lemma:comparison_of_steps} we have $-\Delta_\infty^{2\delta} \overline{u}^\delta \leq \overline{f}^{2\delta} - c_f(\delta)$ and therefore also $-\Delta_\infty^{3\delta}\overline{u}^{2\delta} \leq \overline{f}^{4\delta} - c_f(\delta)$. By Lemma \ref{lemma:starting_from_the_subsolution} these inequalities and their symmetric counterparts imply that
\[
c_{\underline{f}_{2\delta}}(2\delta) \leq  c_f(\delta) \leq c_{\overline{f}^{2\delta}}(2\delta)\ \text{ and } \
c_{\underline{f}_{4\delta}}(3\delta) \leq  c_f(\delta) \leq c_{\overline{f}^{4\delta}}(3\delta).
\]
Applying these estimates inductively to $\delta = \epsilon 2^{-n}, \dots \epsilon/2$ and $\delta = \epsilon 3^{-n}, \dots \epsilon/3$ respectively, we see that
\[
c_{\underline{f}_{2\epsilon}}(\epsilon) \leq  c_f(\epsilon 2^{-n}) \leq c_{\overline{f}^{2\epsilon}}(\epsilon)\ \text{ and } \
c_{\underline{f}_{2\epsilon}}(\epsilon) \leq  c_f(\epsilon 3^{-n}) \leq c_{\overline{f}^{2\epsilon}}(\epsilon).
\]
Using \eqref{eq:expanding for shifting constants} with $r=2\epsilon$ and $\rho= 2\epsilon$,
these inequalities imply \eqref{eq: shifting constant estimates}. 

\end{proof}


\begin{proof}[Proof of Theorem \ref{thm:convergence_of_shifting_constants}]
Since $\min f \leq c_f(\epsilon) \leq \max f$, there are accumulation points of $c_f(\epsilon)$ as $\epsilon \downarrow 0$, and we only need to prove that there is only one. Suppose that there are two such accumulation points $c_1 < c_2$ and denote $\delta=c_2-c_1$. Let
 $I_1$ and $I_2$ be disjoint open intervals of length $\delta/2$, centered around $c_1$ and $c_2$ respectively. Let $\epsilon_0$ be a positive real number such that $\osc(f,\epsilon_0) \leq \delta/4$, and consider the open sets $J_1$ and $J_2$ defined as $J_i = c_f^{-1}(I_i) \cap (0,\epsilon_0/2)$. 
First note that the set $\{2^{-m}3^n: m,n \in \mathbb{Z}^+\}$ is dense in $\mathbb{R}^+$. This follows from the fact that $\{ n \log 3 - m\log 2: m,n \in \mathbb{Z}^+\}$ is dense in $\mathbb{R}$, which in turn follows from the fact that $\log 3/\log 2$ is an irrational number. 
Take an arbitrary $t \in J_1$ and, since $\{s/t: s \in J_2\}$ is an open set in $\mathbb{R}^+$,
we can find non-negative integers $m_0$ and $n_0$ such that $3^{n_0}2^{-m_0}t \in J_2$. Therefore 
\[
|c_f(t) - c_f(3^{n_0}2^{-m_0}t)| > \delta/2.
\]
However this gives a contradiction, since both $t$ and $3^{n_0}2^{-m_0}t$ lie in the interval $(0,\epsilon_0/2)$, and so by Proposition \ref{prop: shifting constant estimates} we have 
\[
\max\{|c_f(2^{-m_0} t) - c_f(t)|, |c_f(3^{n_0}2^{-m_0} t) - c_f(2^{-m_0}t)|\} \leq \osc(f,\epsilon_0) \leq \delta/4.
 \]
%
\end{proof}

\begin{proposition}\label{prop: equicontinuity and boundedness for varying balls}
For a sequence  $(\epsilon_n)$ converging to zero, let $(u_n)$ be a sequence of continuous functions on a compact length space $V$, satisfying
$-\Delta_\infty^{\epsilon_n} u_{n} = f-c(\epsilon_n)$. Then  $(u_{n})$ is an equicontinuous sequence and for all $n$ large enough we have
\[
\max u_n - \min u_n \leq 6\diam(V)^2 \|f\|.
\]
Furthermore,
 any subsequential limit of the sequence $(u_n)$ is Lipshitz continuous, with the Lipshitz constant $5\diam(V) \|f\|$.
\end{proposition}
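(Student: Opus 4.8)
The plan is to recast the equation as a fixed-point identity and then reuse Player II's ``pull towards $z$'' strategy from the proof of Lemma \ref{lemma:boundedness_of_differences}, now applied to the fixed point $u_n$ and with a sharpened hitting-time bound, while carefully tracking the dependence on $\epsilon_n$.

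First I would record the translation. By Remark \ref{rem:two_laplacians} the identity $-\Delta_\infty^{\epsilon_n}u_n = f - c(\epsilon_n)$ is equivalent to $u_n = A_{g_n}u_n$, where $g_n = \tfrac{\epsilon_n^2}{2}\bigl(f - c(\epsilon_n)\bigr)$, so that $u_n$ is a fixed point of the horizon game on the $\epsilon_n$-adjacency graph $G$ with running payoff $g_n$. Since $\min f \le c(\epsilon_n) \le \max f$ (as noted in the proof of Theorem \ref{thm:convergence_of_shifting_constants}), we have $0 \le \max g_n \le \tfrac{\epsilon_n^2}{2}(\max f - \min f)$. I would also record the scale bookkeeping: because $V$ is a length space, a geodesic between $x$ and $y$ splits into $\lceil d(x,y)/\epsilon_n\rceil$ arcs of length at most $\epsilon_n$, so $\dist(x,y)\,\epsilon_n \le d(x,y) + \epsilon_n$ and in particular $\diam(G)\,\epsilon_n \le \diam(V)+\epsilon_n$.

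Next, fixing $z$ and having Player II pull the token towards $z$, the process $Y_t = (\diam(G) - \dist(X_t,z))^2 - t$ is a submartingale with bounded increments stopped at the hitting time $T$ of $z$, and $T$ has finite expectation (being dominated, as in Lemma \ref{lemma:boundedness_of_differences}, by the first time Player II wins $\diam(G)$ consecutive tosses). Applying the optional stopping theorem and keeping the exact initial value $Y_0 = (\diam(G)-\dist(x,z))^2$ rather than merely $Y_0 \ge 0$ yields the local estimate $\mathbb{E}(T) \le 2\diam(G)\dist(x,z)$, together with the crude global bound $\mathbb{E}(T)\le \diam(G)^2$. Using that $u_n$ is a fixed point, the value of the horizon-$N$ game started at $x$ with terminal payoff $u_n$ and running payoff $g_n$ equals $u_n(x)$; bounding this value from above by the suboptimal strategy ``pull towards $z$, then play optimally'' (whose post-$T$ continuation value is exactly $u_n(z)$) and letting $N \to \infty$ (dominated convergence controls the accumulated running payoff since $\mathbb{E}(T)<\infty$, while $\mathbb{P}(T>N)\to 0$ kills the truncation term), I would obtain, after inserting the bounds above,
\[
u_n(x) - u_n(z) \le \mathbb{E}(T)\max g_n \le (\diam(V)+\epsilon_n)(d(x,z)+\epsilon_n)(\max f - \min f).
\]
Exchanging $x$ and $z$ gives the same bound for $|u_n(x)-u_n(z)|$.

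From here the three conclusions follow. For the oscillation bound I take $x$ and $z$ to be points where $u_n$ attains its maximum and minimum and use the global estimate $\mathbb{E}(T)\le\diam(G)^2$, giving $\max u_n - \min u_n \le \tfrac12(\diam(V)+\epsilon_n)^2(\max f - \min f) \le (\diam(V)+\epsilon_n)^2\|f\|$, which is at most $6\diam(V)^2\|f\|$ once $\epsilon_n$ is small, i.e. for all $n$ large. For equicontinuity I split the sequence: the displayed modulus bound is uniform over all sufficiently large $n$, while the finitely many remaining $u_n$ are individually uniformly continuous by Remark \ref{rem:continuity}, so a common modulus exists. Finally, for a subsequential limit $u = \lim_j u_{n_j}$ I fix $x,z$ and let $\epsilon_{n_j}\to 0$ in the modulus bound to get $|u(x)-u(z)| \le \diam(V)(\max f - \min f)\,d(x,z) \le 2\diam(V)\|f\|\,d(x,z)$, comfortably within the claimed constant $5\diam(V)\|f\|$. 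The main obstacle is transferring the ``pull towards $z$'' estimate, originally written for finite-horizon game values, to the fixed point $u_n$: this requires the $N\to\infty$ limiting argument and the observation that the fixed-point property makes the continuation value after $T$ equal to $u_n(z)$. The secondary delicate point is that obtaining a genuine Lipschitz estimate (not merely control of the oscillation) forces the sharper local bound $\mathbb{E}(T)\le 2\diam(G)\dist(x,z)$, and that every constant must be carried through the $\epsilon_n$-dependent passage $\diam(G)\epsilon_n \le \diam(V)+\epsilon_n$ to keep the estimates uniform in $n$.
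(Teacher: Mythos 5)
Your proof is correct, but it takes a genuinely different route from the paper. The paper's proof is purely analytic: it bounds $T_{\epsilon_n}^+u_n$ at a point where it is maximized via the Armstrong--Smart ``marching argument'' (iterating $T_{\epsilon_n}^+u_n(x)\le T_{\epsilon_n}^+u_n(y)+2\epsilon_n^2\|f\|$ along a chain of successive maximizers and playing the resulting linear growth against the diameter bound), obtaining $T_{\epsilon_n}^+u_n\le 5\diam(V)\|f\|\epsilon_n$ and then chaining as in \eqref{eq:marching_argument_0}. You instead exploit the probabilistic interpretation: the fixed-point identity $u_n=A_{g_n}u_n$ makes $u_n$ the value of the horizon-$N$ game for every $N$, and Player II's ``pull towards $z$'' strategy from Lemma \ref{lemma:boundedness_of_differences}, with the optional-stopping computation kept exact (retaining $Y_0=(\diam(G)-\dist(x,z))^2$ rather than $Y_0\ge 0$), yields the local bound $\mathbb{E}(T)\le 2\diam(G)\dist(x,z)$ and hence directly $|u_n(x)-u_n(z)|\le(\diam(V)+\epsilon_n)(d(x,z)+\epsilon_n)(\max f-\min f)$. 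All the steps check out: the $N\to\infty$ passage is justified since $\mathbb{E}(T)<\infty$ forces $N\,\mathbb{P}(T>N)\to 0$, the continuation value after $T$ is exactly $u_n(z)$ by the fixed-point property, $\max g_n\ge 0$ because $c_f(\epsilon_n)\le\max f$, and the conversion $\dist(x,z)\epsilon_n\le d(x,z)+\epsilon_n$ is valid in a compact length space. Your constants ($2\diam(V)\|f\|$ for the Lipschitz bound, $(\diam(V)+\epsilon_n)^2\|f\|$ for the oscillation) are in fact slightly better than the stated ones. What the paper's approach buys is self-containedness at the level of the operator $\Delta_\infty^{\epsilon_n}$, with no appeal to strategies, optimal play, or measurable selections in the continuum (issues your route inherits from Lemma \ref{lemma:boundedness_of_differences}, at the same level of rigor as the paper itself); what your approach buys is reuse of machinery already built in Section \ref{section: the discrete case} and a sharper Lipschitz constant.
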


\begin{proof}
It is enough to prove that for  $n$ large enough  and any $x \in \overline{V}$, we have that 
\begin{equation}\label{eq: equicontinuity 2 sufficient}
T_{\epsilon_n}^+u_n(x) \leq 5\diam(V) \|f\|\epsilon_n.
\end{equation} 
This is because, for any two points $x,y \in V$ and $n$ such that $\epsilon_n < d(x,y)$ there are points $x = x_0, x_1, \dots , x_k,x_{k+1}=y$ in $V$ such that $d(x_{i},x_{i+1}) < \epsilon_n$ and
$k=\lfloor d(x,y)/\epsilon_n \rfloor$.
Assuming that \eqref{eq: equicontinuity 2 sufficient} holds 
 we have that 
\begin{equation}\label{eq:marching_argument_0}
u_n(y) -u_n(x) = \sum_{i=0}^k(u_n(x_{i+1})-u_n(x_i)) \leq \sum_{i=0}^k T_{\epsilon_n}^+u_n(x_i) \leq Kd(x,y) + K\epsilon_n,
\end{equation}
where $K=5\diam(V) \|f\|$.
On the other hand for $\epsilon_n \geq d(x,y)$ we have $u_{n}(y) - u_{n}(x) \leq K\epsilon_n$. These two facts then easily imply the equicontinuity, the required bound on $\max u_{n} - \min u_{n}$, for $n$ large enough and the Lipshitz continuity of subsequential limits.


The rest of the proof will be devoted to establishing the bound in \eqref{eq: equicontinuity 2 sufficient}. 
We will use the ``marching argument'' of Armstrong and Smart from Lemma 3.9 in \cite{AS}.
First by \eqref{eq:comparison_of_steps_help_2} if $y \in B(x,\epsilon_n)$ is such that $u_n(y) = \overline{u_n}^{\epsilon_n}(x)$ then using the fact that $\min f \leq c_f(\epsilon_n) \leq \max f$ we have
\begin{equation}\label{eq: marching argument_1}
T_{\epsilon_n}^+u_n(x) \leq  T_{\epsilon_n}^+u_n(y) + \epsilon_n^2\|f- c_f(\epsilon_n)\| \leq T_{\epsilon_n}^+u_n(y) + 2\epsilon_n^2\|f\|.
\end{equation}
For a fixed $n$ let  $x_0 \in \overline{V}$ be a point where the value of $T_{\epsilon_n}^+u_n$ is maximized (it's a continuous function so it can be maximized) and let $M_{n} = T_{\epsilon_n}^+u_{n}(x_0)$ be the maximal value. 
Using the same argument as in \eqref{eq:marching_argument_0} and the fact that $V$ is bounded we have
\begin{equation}\label{eq: marching argument_2}
u_{n}(y)-u_{n}(x) \leq \Big(\frac{d(x,y)}{\epsilon_n} + 1\Big)M_{n}.
\end{equation}
Then for any $k$ let $x_{k+1} \in B(x_k,\epsilon_n)$ be such that $u_{n}(x_{k+1}) = \overline{u_n}^{\epsilon_n}(x_k)$. By (\ref{eq: marching argument_1}) we have that $T_{\epsilon_n}^+u_n(x_{k+1}) \geq T_{\epsilon_n}^+u_n(x_k) - 2{\epsilon_n}^2\|f\|$ and thus $T_{\epsilon_n}^+u_n(x_{k}) \geq T_{\epsilon_n}^+u_n(x_0) - 2k\epsilon_n^2\|f\|$ which implies that for any $m \geq 1$ 
\[
u_n(x_m) - u_n(x_0) = \sum_{k=0}^{m-1}T_{\epsilon_n}^+u_n(x_k) \geq mT_{\epsilon_n}^+u_n(x_0) - m^2\epsilon_n^2\|f\|. 
\] 
Combining this with (\ref{eq: marching argument_2}) we obtain that
\[
mM_n - m^2\epsilon_n^2\|f\| \leq \Big(\frac{\diam (V)}{\epsilon_n}+1\Big)M_n,
\]
which gives
\[
M_n \leq \frac{m^2\epsilon_n^2\|f\|}{m - 1 -\diam(V)/\epsilon_n}.
\]
Plugging in $m= \left \lfloor2\diam(V)/\epsilon_n +2 \right \rfloor$ proves \eqref{eq: equicontinuity 2 sufficient} for $\epsilon_n$ small enough.
\end{proof}

\begin{proof}[Proof of Theorem \ref{thm:general_continuous_solutions}]
The claim follows directly from Proposition \ref{prop: equicontinuity and boundedness for varying balls} using the Arzela-Ascoli theorem.
\end{proof}

Finally Theorem \ref{thm: convergence for different shiftings} below proves Theorem \ref{thm:existence_of_continuous_solutions}. However we will first need to state an auxiliary result which appeared as Lemma 4.2 in \cite{AS}.  For $x \in \mathbb{R}^d$ and $\epsilon > 0$, define $\mathbf{B}(x,\epsilon)$ as the closed ball around $x$ of Euclidean radius $\epsilon$. Also define the discrete infinity Laplacian $\widetilde{\Delta}_\infty^\epsilon$ on the whole $\mathbb{R}^d$ as 
 \[
 \widetilde{\Delta}_\infty^\epsilon v(x) = \frac{1}{\epsilon^2}\Big(\max_{\mathbf{B}(x,\epsilon)}v + \min_{\mathbf{B}(x,\epsilon)}v - 2v(x)\Big).
 \]
The first part of the following lemma is the content of Lemma 4.2 in \cite{AS}, while the second part is contained in its proof (see (4.5) in \cite{AS}). 

\begin{lemma}[Lemma 4.2 and (4.5) from \cite{AS}]\label{lemma:comparison_discrete_to_infinite}
For any open set $U$, function $\varphi \in C^3(U)$ and $\epsilon_0>0$ there is a constant $C>0$, depending only on $\varphi$, such that the following holds. 
\begin{itemize}
\item[(i)] For  any point $x \in U$ that satisfies $\mathbf{B}(x,2\epsilon_0) \subseteq U$ and $\nabla\varphi(x) \neq 0$ we have
\[
-\Delta_\infty \varphi(x)  \leq -\Delta_\infty^\epsilon \varphi(x) + C(1+|\nabla\varphi(x)|^{-1})\epsilon,
\]
for all $0 < \epsilon \leq \epsilon_0$. 
\item[(ii)] For any  $0 < \epsilon \leq \epsilon_0$ if $\mathbf{v}= \nabla\varphi(x)/|\nabla\varphi(x)|$ and $\mathbf{w}\in \mathbf{B}(0,1)$ is such that $\varphi(x+\epsilon \mathbf{w}) = \max_{\mathbf{B}(x,\epsilon)}\varphi$, then $|\mathbf{v}-\mathbf{w}| \leq C|\nabla\varphi(x)|^{-1}\epsilon$.
\end{itemize}
\end{lemma}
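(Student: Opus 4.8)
The plan is to prove both parts by Taylor-expanding $\varphi$ about $x$ to second order with a third-order remainder, which is where the $C^3$ hypothesis and the dependence of $C$ on $\varphi$ (through $\sup_U\|D^2\varphi\|$ and $\sup_U\|D^3\varphi\|$) enter. Write $g=|\nabla\varphi(x)|$, $\mathbf v=\nabla\varphi(x)/g$, and abbreviate $H=D^2\varphi(x)$, so that $\Delta_\infty\varphi(x)=\langle H\mathbf v,\mathbf v\rangle=\sum_{i,j}\varphi_{x_i}(x)\varphi_{x_ix_j}(x)\varphi_{x_j}(x)/g^2$. First I would reduce to the regime $g>\epsilon\|D^2\varphi\|_\infty$. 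In the complementary regime $g\le\epsilon\|D^2\varphi\|_\infty$ one has $g^{-1}\epsilon\ge\|D^2\varphi\|_\infty^{-1}$, so after enlarging $C$ the right-hand sides of both (i) and (ii) dominate the bounded quantities they must control: for (ii) because $|\mathbf v-\mathbf w|\le 2$ always, and for (i) because a crude Taylor bound gives $|\Delta_\infty^\epsilon\varphi(x)|\le 2g/\epsilon+\|D^2\varphi\|_\infty+O(\epsilon)$, which is $O(1)$ there. In the good regime $\nabla\varphi$ does not vanish on $\mathbf B(x,\epsilon)$ (since $|\nabla\varphi(x+\epsilon\mathbf w)-\nabla\varphi(x)|\le\epsilon\|D^2\varphi\|_\infty<g$), so both $\max_{\mathbf B(x,\epsilon)}\varphi$ and $\min_{\mathbf B(x,\epsilon)}\varphi$ are attained on the sphere $|\mathbf w|=1$; denote the corresponding unit vectors $\mathbf w^+$ and $\mathbf w^-$.

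For part (ii) I would use the first-order optimality condition rather than a comparison of values. At the constrained maximizer the gradient is an outward normal, $\nabla\varphi(x+\epsilon\mathbf w^+)=\mu\mathbf w^+$ for some $\mu>0$. Expanding the gradient, $\nabla\varphi(x+\epsilon\mathbf w^+)=\nabla\varphi(x)+\epsilon H\mathbf w^++\mathbf r$ with $|\mathbf r|=O(\epsilon^2)$, so $g\mathbf v-\mu\mathbf w^+=-\epsilon H\mathbf w^+-\mathbf r$ has norm at most $\epsilon\|D^2\varphi\|_\infty+O(\epsilon^2)=:C'\epsilon$. The reverse triangle inequality yields $|\mu-g|\le C'\epsilon$, and then $|\mathbf v-\mathbf w^+|\le g^{-1}|g\mathbf v-\mu\mathbf w^+|+g^{-1}|\mu-g|\le 2C'g^{-1}\epsilon$. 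Applying the same reasoning to $-\varphi$ gives $|(-\mathbf v)-\mathbf w^-|\le 2C'g^{-1}\epsilon$.

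For part (i) I would insert $\mathbf w^\pm$ into the second-order expansion and add the two identities:
\[
\max_{\mathbf B(x,\epsilon)}\varphi+\min_{\mathbf B(x,\epsilon)}\varphi-2\varphi(x)=\epsilon g\,\mathbf v\cdot(\mathbf w^++\mathbf w^-)+\tfrac{\epsilon^2}{2}\big(\langle H\mathbf w^+,\mathbf w^+\rangle+\langle H\mathbf w^-,\mathbf w^-\rangle\big)+O(\epsilon^3).
\]
For the linear term, since $\mathbf w^+$, $\mathbf w^-$ and $\pm\mathbf v$ are unit vectors, part (ii) gives $1-\mathbf v\cdot\mathbf w^+=\tfrac12|\mathbf v-\mathbf w^+|^2=O(g^{-2}\epsilon^2)$ and likewise $1+\mathbf v\cdot\mathbf w^-=O(g^{-2}\epsilon^2)$, so $\mathbf v\cdot(\mathbf w^++\mathbf w^-)=O(g^{-2}\epsilon^2)$ and the linear term is $O(g^{-1}\epsilon^3)$. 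For the quadratic term, part (ii) and boundedness of $H$ give $\langle H\mathbf w^\pm,\mathbf w^\pm\rangle=\langle H\mathbf v,\mathbf v\rangle+O(g^{-1}\epsilon)$, so this term equals $\epsilon^2\langle H\mathbf v,\mathbf v\rangle+O(g^{-1}\epsilon^3)$. Dividing by $\epsilon^2$ yields $\Delta_\infty^\epsilon\varphi(x)=\Delta_\infty\varphi(x)+O((1+g^{-1})\epsilon)$, which is a two-sided sharpening of (i).

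The main obstacle is the linear rate $g^{-1}\epsilon$ in part (ii): the naive comparison $\varphi(x+\epsilon\mathbf w^+)\ge\varphi(x+\epsilon\mathbf v)$ only produces $\tfrac g2|\mathbf v-\mathbf w^+|^2\le C\epsilon^2$, hence the weaker $|\mathbf v-\mathbf w^+|\le C\sqrt{\epsilon/g}$, which is not enough to make the linear term in part (i) genuinely lower order. Upgrading to the sharp linear rate is exactly what forces the use of the first-order optimality condition together with full $C^3$ regularity, so that the \emph{gradient} (not merely $\varphi$) can be Taylor-expanded. The secondary technical point is the bookkeeping of the degenerate regime $g\lesssim\epsilon$, where the estimates survive only because their right-hand sides blow up and can be absorbed into a sufficiently large $C$.
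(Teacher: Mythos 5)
The paper does not prove this lemma at all: it is imported verbatim from Armstrong--Smart \cite{AS} (their Lemma 4.2 for part (i) and display (4.5) in its proof for part (ii)), so there is no in-paper argument to compare against. Your proof is correct and is essentially the argument given in \cite{AS}: the Lagrange/first-order optimality condition $\nabla\varphi(x+\epsilon\mathbf{w}^+)=\mu\mathbf{w}^+$ at the constrained maximizer, combined with a Taylor expansion of the gradient, yields the linear rate in (ii), and inserting $\mathbf{w}^\pm$ into the second-order expansion of $\varphi$ then yields (i), with the degenerate regime $|\nabla\varphi(x)|\lesssim\epsilon$ absorbed into the constant exactly as you describe.
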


Next we give an auxiliary Lemma needed for the proof of Theorem \ref{thm: convergence for different shiftings}. First define a cone in $\mathbb{R}^d$ with vertex $x$, direction $\mathbf{v} \in \mathbb{R}^d$, $|\mathbf{v}| =1$, angle $2 \arcsin \alpha$ and radius $r$ as
\[
\mathbf{C}(x,\mathbf{v},\alpha,r)=\Big\{\lambda\mathbf{w} \in \mathbb{R}^d: 0 \leq \lambda \leq r, |\mathbf{w}|=1, \mathbf{w} \cdot \mathbf{v} \geq 1-\alpha\Big\}.
\]

\begin{lemma}\label{lemma: cones}
Let $\Omega$ be a domain with $C^1$ boundary $\partial \Omega$, let $x_0 \in \partial \Omega$. Assume $\varphi \in C^\infty(\overline{\Omega})$ is a smooth function on the closure of $\Omega$ and $\nabla_{\nu} \varphi (x_0) > 0$. Then we can find positive $\alpha$ and $r$ and an open set $U$ containing $x_0$ such that $\mathbf{C}(x,-\nabla\varphi(x)/|\nabla\varphi(x)|,\alpha,r) \subset \overline{\Omega}$, for all $x \in U \cap \overline{\Omega}$.
\end{lemma}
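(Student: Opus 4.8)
The plan is to reduce to local graph coordinates and then exploit the transversality encoded in $\nabla_\nu\varphi(x_0)>0$ together with the near-flatness of a $C^1$ boundary. Since the hypotheses, the domain, the gradient $\nabla\varphi$ and the cones $\mathbf{C}$ all transform consistently under isometries of $\mathbb{R}^d$, I would first apply the isometry from the definition of $C^1$ boundary to assume $x_0=0$, $\nu(0)=-\mathbf{e}_d$, and that in a neighborhood $V$ of $0$ the closure $\overline\Omega$ coincides with the epigraph $\{(y',y_d):y_d\ge\phi(y')\}$ of a $C^1$ function $\phi$ with $\phi(0)=0$ and $\nabla\phi(0)=0$. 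Because $\nabla_\nu\varphi(0)=\nabla\varphi(0)\cdot(-\mathbf{e}_d)>0$ we in particular have $\nabla\varphi(0)\neq0$, so $\mathbf{v}(x):=-\nabla\varphi(x)/|\nabla\varphi(x)|$ is well defined and continuous near $0$, and its vertical component at the origin is $\mathbf{v}(0)\cdot\mathbf{e}_d=\nabla_\nu\varphi(0)/|\nabla\varphi(0)|=:\eta>0$.

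Next I would fix the constants. By continuity of $\mathbf{v}$ I can shrink the neighborhood so that $\mathbf{v}(x)\cdot\mathbf{e}_d\ge\eta/2$ on it; by the $C^1$ regularity of $\phi$ and $\nabla\phi(0)=0$ I can shrink it further so that $|\nabla\phi|\le\kappa:=\eta/8$ on the relevant region, which makes $\phi$ Lipschitz with constant $\kappa$ there. I then choose $\alpha>0$ with $\sqrt{2\alpha}\le\eta/4$; for any unit vector $\mathbf{w}$ with $\mathbf{w}\cdot\mathbf{v}(x)\ge1-\alpha$ one has $|\mathbf{w}-\mathbf{v}(x)|^2=2-2\,\mathbf{w}\cdot\mathbf{v}(x)\le2\alpha$, whence the coordinate bound $\mathbf{w}\cdot\mathbf{e}_d\ge\mathbf{v}(x)\cdot\mathbf{e}_d-\sqrt{2\alpha}\ge\eta/4>\kappa$. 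Finally I pick the open set $U\ni0$ and the radius $r>0$ small enough that every point $x+\lambda\mathbf{w}$ with $x\in U\cap\overline\Omega$, $0\le\lambda\le r$ and $|\mathbf{w}|=1$ still lies in the chart $V$ where the epigraph description is valid.

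It then remains to check that such a cone point $p=x+\lambda\mathbf{w}=(x'+\lambda\mathbf{w}',x_d+\lambda\mathbf{w}_d)$ lies in $\overline\Omega$, i.e.\ that $p_d\ge\phi(p')$. Writing $w_d=\mathbf{w}\cdot\mathbf{e}_d$ and using that $x\in\overline\Omega$ gives $x_d\ge\phi(x')$, together with the Lipschitz bound on $\phi$ and $|\mathbf{w}'|\le1$, I would estimate
\[
p_d-\phi(p')=(x_d-\phi(x'))+\lambda w_d-\big(\phi(x'+\lambda\mathbf{w}')-\phi(x')\big)\ge\lambda w_d-\kappa\lambda|\mathbf{w}'|\ge\lambda\big(\tfrac{\eta}{4}-\kappa\big)\ge0,
\]
which shows $p\in\overline\Omega$ and hence $\mathbf{C}(x,\mathbf{v}(x),\alpha,r)\subset\overline\Omega$ for every $x\in U\cap\overline\Omega$. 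Undoing the isometry gives the claim.

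The conceptually essential point—and the only place something could genuinely go wrong—is obtaining a uniform inward-pointing angle: the transversality $\nabla_\nu\varphi(x_0)>0$ furnishes a strictly positive vertical component $\eta$ of the descent direction at $x_0$, and this must be propagated to all nearby vertices $x$ (boundary vertices included) simultaneously. This is exactly what the continuity of $\mathbf{v}$ and the near-flatness $\nabla\phi(0)=0$ of the $C^1$ boundary provide; the remaining work—keeping the cone inside the chart and converting the angle condition into the coordinate inequality $w_d\ge\kappa$—is routine bookkeeping. I would also note that the argument covers interior vertices $x$ automatically, since $x_d\ge\phi(x')$ is the only property of $x$ that is used.
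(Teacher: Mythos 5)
Your proof is correct. It establishes the same geometric fact by a somewhat different route than the paper. The paper first reduces, by continuity of $\nabla\varphi$, to cones with the \emph{fixed} axis $-\nabla\varphi(x_0)/|\nabla\varphi(x_0)|$; it then invokes the $C^1$ regularity of $\partial\Omega$ to assert that for small enough $\alpha$ and $r$ one has $\mathbf{C}_x\subset\overline{\Omega}$ and, for the reverse cone, $\mathbf{C}'_x\subset\overline{\Omega^c}$ at every boundary point $x$ with $|x-x_0|<2r$; finally it transfers the cone condition from boundary points to interior points by contradiction: if the cone from an interior $x$ escaped $\overline{\Omega}$ it would meet $\partial\Omega$ at some $y$ with $|y-x_0|<2r$, and then $x\in\mathbf{C}'_y\cap\Omega$ would contradict $\mathbf{C}'_y\subset\overline{\Omega^c}$. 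You instead pass to graph coordinates, keep the axis varying with $x$, and verify the epigraph inequality $p_d\ge\phi(p')$ directly from the Lipschitz bound $|\nabla\phi|\le\kappa$ and the uniform lower bound $w_d\ge\eta/4>\kappa$ on the vertical component of all cone directions. What your version buys is an explicit, quantitative argument in which interior and boundary points are treated uniformly (only $x_d\ge\phi(x')$ is used) and in which the step the paper labels ``easy to see'' is actually carried out; what the paper's version buys is brevity and freedom from coordinates. The only points worth making explicit in your write-up are that the epigraph description of $\overline{\Omega}$ with $\nabla\phi(0)=0$ requires a rotation and a reapplication of the implicit function theorem (the paper's definition of $C^1$ boundary does not directly give a chart with horizontal tangent plane at $x_0$), and that the Lipschitz estimate for $\phi$ is applied on a convex piece of the chart so that the segment from $x'$ to $x'+\lambda\mathbf{w}'$ stays where $|\nabla\phi|\le\kappa$; both are routine.
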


\begin{proof}
Denote $\mathbf{v} = \nabla\varphi(x_0)/|\nabla\varphi(x_0)|$.
By the continuity of $\nabla \varphi$ it is enough to prove that for some $\alpha$ and $r$ and $U$ we have $\mathbf{C}_x = \mathbf{C}(x, -\mathbf{v},\alpha,r) \subset \overline{\Omega}$, for all $x \in U \cap \overline{\Omega}$. Moreover define the reverse cone $\mathbf{C}'_x=  \mathbf{C}(x,\mathbf{v},\alpha,r)$.
It is clear that we can find $\alpha$ and $r$ satisfying 
$\mathbf{C}_{x_0} \subset \overline{\Omega}$.
Moreover, since the boundary $\partial \Omega$ is $C^1$, it is easy to see that,
by decreasing $\alpha$ and $r$ if necessary, we can assume that $\mathbf{C}_x \subset \overline{\Omega}$ and $\mathbf{C}'_x \subset \overline{\Omega^c}$, for all $x \in \partial \Omega$ with $|x-x_0| < 2r$. Then if $x \in \Omega$ is such that $|x-x_0| < r$ and $\mathbf{C}_x \not\subset \overline{\Omega}$ we can find a point $y \in \partial \Omega \cap \mathbf{C}_x$. Then the fact that $x \in \mathbf{C}'_y \cap \Omega$ and $|y - x_0| < 2r$ leads to contradiction.
\end{proof}

\begin{theorem}\label{thm: convergence for different shiftings}
Let $\Omega \subset \mathbb{R}^d$ be a domain with $C^1$ boundary, $(\epsilon_n)$ a sequence of positive real numbers converging to zero  and $(u_n)$  a sequence of continuous functions on $\overline{\Omega}$ satisfying $-\Delta_\infty^{\epsilon_n}u_n = f- c_f(\epsilon_n)$. Any limit $u$ of a subsequence of $(u_n)$ is a viscosity solution to
\begin{equation*}
\left\{\begin{aligned}
& -\Delta_\infty u = f-\overline{c}_f & \text{in} & \ \Omega,\\
&\nabla_\nu u = 0 & \text{on} & \ \partial \Omega.
\end{aligned}\right.
\end{equation*}
\end{theorem}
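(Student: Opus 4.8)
The plan is to verify the two conditions of Definition \ref{def:viscosity_solutions} for the equation with right-hand side $f-\overline{c}_f$. Since $c_f(\epsilon_n)\to\overline{c}_f$ by Theorem \ref{thm:convergence_of_shifting_constants}, and since replacing $(u,f)$ by $(-u,-f)$ turns a subsolution into a supersolution (using $c_{-f}=-c_f$ and $\overline{c}_{-f}=-\overline{c}_f$ from Remark \ref{rem:shifting constant scaling}, together with $\Delta_\infty^{\epsilon}(-u)=-\Delta_\infty^{\epsilon}u$), it suffices to show $u$ is a subsolution. So I would fix $\varphi\in C^\infty(\overline{\Omega})$ and $x_0\in\overline{\Omega}$ at which $u-\varphi$ has a strict local maximum, assume condition (b) fails (so either $x_0\in\Omega$, or $x_0\in\partial\Omega$ with $\nabla_{\nu(x_0)}\varphi(x_0)>0$), and aim for $-\Delta_\infty^+\varphi(x_0)\le f(x_0)-\overline{c}_f$. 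First I would pass to nearby discrete maxima: since $u_n\to u$ uniformly on a fixed compact neighborhood $N\subset\overline{\Omega}$ of $x_0$ on which $x_0$ is the unique maximizer of $u-\varphi$, the maximizers $x_n$ of $u_n-\varphi$ over $N$ satisfy $x_n\to x_0$. For $n$ large the intrinsic ball $B(x_n,\epsilon_n)$ lies in $N$, so $u_n(y)-u_n(x_n)\le\varphi(y)-\varphi(x_n)$ there; taking the maximum and the minimum over the ball gives $T^+_{\epsilon_n}u_n(x_n)\le T^+_{\epsilon_n}\varphi(x_n)$ and $T^-_{\epsilon_n}u_n(x_n)\ge T^-_{\epsilon_n}\varphi(x_n)$, whence
\begin{equation*}
-\Delta_\infty^{\epsilon_n}\varphi(x_n)\le -\Delta_\infty^{\epsilon_n}u_n(x_n)=f(x_n)-c_f(\epsilon_n).
\end{equation*}
Everything reduces to controlling the left-hand side as $n\to\infty$.

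If $\nabla\varphi(x_0)=0$ (which, since (b) fails, forces $x_0\in\Omega$), I would argue crudely by Taylor's theorem. Bounding the intrinsic maximum by the Euclidean one gives $\overline{\varphi}^{\epsilon_n}(x_n)\le\varphi(x_n)+\epsilon_n|\nabla\varphi(x_n)|+\tfrac12\epsilon_n^2\lambda_{\max}+O(\epsilon_n^3)$, where $\lambda_{\max}$ is the largest eigenvalue of $D^2\varphi(x_n)$, and evaluating $\varphi$ at $x_n-\epsilon_n\nabla\varphi(x_n)/|\nabla\varphi(x_n)|$ gives the matching $\underline{\varphi}_{\epsilon_n}(x_n)\le\varphi(x_n)-\epsilon_n|\nabla\varphi(x_n)|+\tfrac12\epsilon_n^2\lambda_{\max}+O(\epsilon_n^3)$. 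The linear terms cancel, so $-\Delta_\infty^{\epsilon_n}\varphi(x_n)\ge-\lambda_{\max}-O(\epsilon_n)$; combined with the display and letting $n\to\infty$ this yields $-\Delta_\infty^+\varphi(x_0)=-\lambda_{\max}(D^2\varphi(x_0))\le f(x_0)-\overline{c}_f$, which is (a).

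The substantial case is $\nabla\varphi(x_0)\ne 0$, and here the main obstacle is that $B(x_n,\epsilon_n)$ is an intrinsic ball that is truncated near $\partial\Omega$, so Lemma \ref{lemma:comparison_discrete_to_infinite} (which compares $\Delta_\infty$ with the full Euclidean operator $\widetilde{\Delta}_\infty^{\epsilon}$) does not apply verbatim. The key is a one-sided reconciliation of the two operators. The intrinsic maximum is always at most the Euclidean maximum over $\mathbf{B}(x_n,\epsilon_n)$. For the minimum, Lemma \ref{lemma:comparison_discrete_to_infinite}(ii) applied to $-\varphi$ shows the Euclidean minimizer over $\mathbf{B}(x_n,\epsilon_n)$ lies in a direction within $C|\nabla\varphi(x_n)|^{-1}\epsilon_n$ of the descent direction $-\nabla\varphi(x_n)/|\nabla\varphi(x_n)|$; when $x_0\in\partial\Omega$ with $\nabla_\nu\varphi(x_0)>0$, Lemma \ref{lemma: cones} supplies a fixed cone about this direction contained in $\overline{\Omega}$, so for $n$ large the Euclidean minimizer lies in that cone, hence in $\overline{\Omega}$, and the straight segment to it certifies that it also lies in the intrinsic ball (when $x_0\in\Omega$ the Euclidean ball is eventually inside $\Omega$ and there is nothing to reconcile). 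Thus the intrinsic and Euclidean minima coincide, giving $-\Delta_\infty^{\epsilon_n}\varphi(x_n)\ge-\widetilde{\Delta}_\infty^{\epsilon_n}\varphi(x_n)$. Feeding this into Lemma \ref{lemma:comparison_discrete_to_infinite}(i),
\begin{equation*}
-\Delta_\infty\varphi(x_n)\le-\widetilde{\Delta}_\infty^{\epsilon_n}\varphi(x_n)+C\bigl(1+|\nabla\varphi(x_n)|^{-1}\bigr)\epsilon_n\le f(x_n)-c_f(\epsilon_n)+C\bigl(1+|\nabla\varphi(x_n)|^{-1}\bigr)\epsilon_n,
\end{equation*}
and since $|\nabla\varphi(x_n)|\to|\nabla\varphi(x_0)|>0$ the error vanishes; letting $n\to\infty$ and using $\Delta_\infty^+\varphi(x_0)=\Delta_\infty\varphi(x_0)$ gives (a). I expect the cone-and-minimizer reconciliation at the boundary to be the delicate step, the rest being the standard limit-of-discrete-solutions viscosity argument together with the estimates imported from \cite{AS}.
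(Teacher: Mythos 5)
Your proposal is correct and follows essentially the same route as the paper: reduce to the subsolution case, pass to nearby maxima of $u_n-\varphi$, use the inequality $-\Delta_\infty^{\epsilon_n}\varphi(x_n)\le f(x_n)-c_f(\epsilon_n)$, and reconcile the intrinsic and Euclidean operators at the boundary via Lemma \ref{lemma:comparison_discrete_to_infinite}(ii) together with the cone of Lemma \ref{lemma: cones}. The only difference is that you treat the interior case (including the degenerate case $\nabla\varphi(x_0)=0$ by a direct Taylor expansion) explicitly, where the paper simply refers to the proofs of Theorem 2.11 in \cite{AS}; your added remark that the straight segment to the Euclidean minimizer certifies membership in the intrinsic ball is a welcome point of care.
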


\begin{proof}
We denote the subsequence again by $(u_n)$.
To prove that $u$ is a solution to \eqref{eq:equation_continuous} we will check the assumptions from Definition \ref{def:viscosity_solutions} for local maxima. The conditions for local minima follow by replacing $u$ and $f$ by $-u$ and $-f$.
Let $\varphi \in C^\infty(\overline{\Omega})$ be a smooth function and $x_0 \in \overline{\Omega}$ be a point at which $u-\varphi$ has a strict local maximum. 
We will prove the claim for $x_0 \in \partial \Omega$. 
For the case $x_0 \in \Omega$ see either of the two proofs of Theorem 2.11 in \cite{AS} (in Sections 4 and 5).

Assume that $\nabla_{\nu} \varphi(x_0) > 0$. 
For $k$ large enough we can find  points $x_k$ so that $\lim_k x_k = x_0$ and such that $u_{k} -  \varphi$ has a local maximum at $x_k$. We can assume that for all $k$ we have $|\nabla \varphi(x_k)| > c$, for some $c>0$. 
Denote $\mathbf{v}_k=-\nabla\varphi(x_k)/|\nabla\varphi(x_k)|$ and for a given $k$  large enough, a point $\overline{x}_k \in \mathbf{B}(x_k,\epsilon_k)$ such that $\varphi(\overline{x}_k) = \min_{\mathbf{B}(x_k,\epsilon_k)}\varphi$. 
By Lemma \ref{lemma:comparison_discrete_to_infinite} (ii) and Lemma  \ref{lemma: cones} we can find $\alpha$ and $r$ such that for $k$ large enough we necessarily have $\overline{x}_k \in \mathbf{C}(x_k, \mathbf{v}_k, \alpha, r) \subset \overline{\Omega}$.
%
%
For such $k$ this readily implies that 
\begin{equation}\label{eq:big_and_small_laplacian}
-\widetilde{\Delta}_\infty^{\epsilon_k} \varphi(x_k) \leq -\Delta_\infty^{\epsilon_k} \varphi(x_k).
\end{equation}
Lemma \ref{lemma:comparison_discrete_to_infinite} further yields that there is a constant $C$ such that, for $k$ large enough
\begin{equation}\label{eq:laplace_close_to_normal_vector}
-\Delta_\infty \varphi(x_k) \leq -\widetilde{\Delta}_\infty^{\epsilon_k} \varphi(x_k) + C(1+c^{-1})\epsilon_k.
\end{equation}
Plugging \eqref{eq:big_and_small_laplacian} into \eqref{eq:laplace_close_to_normal_vector} we obtain
\begin{equation}\label{eq:laplace_close_to_normal_vector_1}
-\Delta_\infty \varphi(x_k) \leq -\Delta_\infty^{\epsilon_k} \varphi(x_k) + C(1+c^{-1})\epsilon_k.
\end{equation}
Since $u_k-\varphi$ has a  local maximum $x_k$ we have that 
\[
-\Delta_\infty^{\epsilon_k} \varphi(x_k) \leq -\Delta_\infty^{\epsilon_k} u_k(x_k) = f(x_k) - c_f(\epsilon_k).
\]
Inserting this into \eqref{eq:laplace_close_to_normal_vector_1} and taking the limit  as $k$ tends to infinity
implies that $-\Delta_\infty \varphi(x_0) \leq f(x_0) - \overline{c}_f$.
\end{proof}

\begin{proof}[Proof of Theorem \ref{thm:existence_of_continuous_solutions}]
The claim follows directly from Theorems \ref{thm:general_continuous_solutions} and \ref{thm: convergence for different shiftings}.
\end{proof}

To prove Theorem \ref{thm:uniqueness_of_shifting_constants} we will use Theorem 2.2 from \cite{ASS}. A general assumption in \cite{ASS} is that the boundary $\partial \Omega$ is decomposed into two disjoint parts, $\Gamma_D \neq \emptyset$ on which Dirichlet boundary conditions are given and $\Gamma_N$ on which vanishing Neumann boundary conditions are given. While the assumption $\Gamma_D \neq \emptyset$ is crucial for their existence result (Theorem 2.4 in \cite{ASS}) this assumption is not used in  Theorem 2.2 from \cite{ASS}. In the case when $\Gamma_D = \emptyset$ their result can be stated as follows.

\begin{theorem}[Case $\Gamma_D \neq \emptyset$ of Theorem 2.2 in \cite{ASS}]\label{thm:from_continuous_to_discrete}
Let $\Omega$ be a convex domain and $f, u\colon \Omega \to \mathbb{R}$ continuous functions such that $u$ is a viscosity subsolution to the equation \eqref{eq:equation_continuous}. Then for any $\epsilon > 0$ it holds that $-\Delta_\infty^\epsilon \overline{u}^\epsilon \leq \overline{f}^{2\epsilon}$ on $\overline{\Omega}$.
\end{theorem}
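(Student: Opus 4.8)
The plan is to reduce the statement to a pointwise inequality at each $x_0$ and then to isolate, as the only genuinely new ingredient, the passage from the continuous viscosity inequality to a discrete one at a single well-chosen point. First I would fix $x_0 \in \overline{\Omega}$ and let $y_0 \in B(x_0,\epsilon)$ be a point at which $u$ attains its maximum over $B(x_0,\epsilon)$, so that $\overline{u}^\epsilon(x_0) = u(y_0)$. The purely length-space manipulation used to prove the second inequality in \eqref{eq:comparison_of_steps_1} of Lemma \ref{lemma:comparison_of_steps} (which never uses the equation) applies verbatim and gives
\[
-\Delta_\infty^\epsilon \overline{u}^\epsilon(x_0) \leq -\Delta_\infty^\epsilon u(y_0),
\]
via the two estimates $T_\epsilon^+\overline{u}^\epsilon(x_0) \geq T_\epsilon^+ u(y_0)$ and $T_\epsilon^-\overline{u}^\epsilon(x_0) \leq T_\epsilon^- u(y_0)$. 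Since $B(y_0,\epsilon) \subseteq B(x_0,2\epsilon)$, and hence $\max_{B(y_0,\epsilon)} f \leq \overline{f}^{2\epsilon}(x_0)$, it suffices to establish the pointwise bound $-\Delta_\infty^\epsilon u(y_0) \leq \max_{B(y_0,\epsilon)} f$.

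The heart of the matter is therefore the consistency statement that a continuous viscosity subsolution of \eqref{eq:equation_continuous} satisfies $-\Delta_\infty^\epsilon u \leq \max_{B(\cdot,\epsilon)} f$ at \emph{every} point. For a point $y_0$ whose intrinsic ball $B(y_0,\epsilon)$ is an honest Euclidean ball contained in $\Omega$, this is the inhomogeneous comparison-with-cones (mean value) property of $\infty$-subharmonic functions. I would argue by contradiction: assuming $-\Delta_\infty^\epsilon u(y_0)$ strictly exceeds $\max_{B(y_0,\epsilon)} f$, I would build an explicit radial barrier $\varphi$, a strict supersolution obtained from the one-dimensional profile ODE associated with $-\Delta_\infty = \max_{B(y_0,\epsilon)} f$ and opening toward the minimizing direction, and slide it until it touches $u$ from above at an interior point. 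At that touching point $u-\varphi$ has a (after a harmless perturbation, strict) local maximum while $-\Delta_\infty^+\varphi > f$, contradicting condition a) of Definition \ref{def:viscosity_solutions}.

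The main obstacle, and the place where the Neumann condition and convexity are indispensable, is the case in which $y_0$ lies near $\partial\Omega$, so that the intrinsic ball $B(y_0,\epsilon)$ is truncated by the boundary and the extremal points defining $T_\epsilon^\pm u(y_0)$ may sit on $\partial\Omega$; there the Euclidean barrier leaves $\overline{\Omega}$ and the interior argument breaks down. I would treat this exactly in the spirit of the geometric Lemma \ref{lemma: cones}: convexity of $\Omega$ together with the $C^1$ boundary guarantees that the relevant cone in the (reflected) gradient direction stays inside $\overline{\Omega}$, keeping the extremal point and the barrier admissible. On a convex domain one may moreover pass to the strong formulation of Remark \ref{rem:strong_viscosity_solutions}, in which a boundary local maximum forces $\nabla_\nu\varphi(y_0) \leq 0$; this Neumann alternative is precisely what compensates for the truncated ball and delivers the discrete inequality at boundary points. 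Combining the interior and boundary cases yields $-\Delta_\infty^\epsilon u(y_0) \leq \max_{B(y_0,\epsilon)} f$ for every $y_0$, which together with the reduction above proves $-\Delta_\infty^\epsilon \overline{u}^\epsilon \leq \overline{f}^{2\epsilon}$ on $\overline{\Omega}$.
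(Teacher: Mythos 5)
You should first note that the paper contains no proof of this statement: it is imported verbatim from Theorem 2.2 of \cite{ASS}, with only the remark that the hypothesis $\Gamma_D\neq\emptyset$ is not used there. So your proposal has to stand on its own, and unfortunately it reduces the theorem to an intermediate claim that is false. Your opening reduction is fine --- the inequalities $T_\epsilon^+\overline{u}^\epsilon(x_0)\geq T_\epsilon^+u(y_0)$ and $T_\epsilon^-\overline{u}^\epsilon(x_0)\leq T_\epsilon^-u(y_0)$ are exactly the metric half of the proof of the second inequality in \eqref{eq:comparison_of_steps_1} and need no equation. But the statement you then set out to prove, namely that a continuous viscosity subsolution satisfies $-\Delta_\infty^\epsilon u(y_0)\leq \max_{B(y_0,\epsilon)}f$ at \emph{every} point, fails already for $f\equiv 0$ and a genuinely $\infty$-harmonic $u$. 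Take Aronsson's function $u(x,y)=x^{4/3}-y^{4/3}$, which is a viscosity solution of $-\Delta_\infty u=0$ on all of $\mathbb{R}^2$, the point $y_0=(0,1)$ and $\epsilon=1$. Then $2u(y_0)=-2$, while $\max_{\mathbf{B}(y_0,1)}u=\max_\theta\bigl(\sin^{4/3}\theta-(1-\cos\theta)^{4/3}\bigr)\approx 0.449$ and $\min_{\mathbf{B}(y_0,1)}u=-2^{4/3}\approx-2.520$ (attained at $(0,2)$), so $\max_{\mathbf{B}(y_0,1)}u+\min_{\mathbf{B}(y_0,1)}u\approx-2.071<-2$ and $-\Delta_\infty^{1}u(y_0)\approx 0.07>0$. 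Comparison with cones from above controls $T_\epsilon^+u$ and the rise $u(z)-u(y_0)$ from above, but it gives no lower bound on $\min_{B(y_0,\epsilon)}u$ of the required form, so no barrier argument can deliver the pointwise ``mean value'' inequality you invoke.

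This is exactly why the theorem is stated for $\overline{u}^\epsilon$ rather than for $u$: the regularization is essential, not a convenience. In the correct argument (the one in \cite{ASS}, and in the interior case in \cite{AS}) one bounds $T_\epsilon^-\overline{u}^\epsilon(x_0)\leq u(y_0)-u(x_0)$, i.e.\ one only needs to control the drop from $y_0$ back to the \emph{specific center} $x_0$, not the drop to the worst point of $B(y_0,\epsilon)$; the required inequality $u(y_0)-u(x_0)\leq T_\epsilon^+u(y_0)+\epsilon^2\,\overline{f}^{2\epsilon}(x_0)$ is an increasing-slope estimate that exploits the fact that $y_0$ \emph{maximizes} $u$ over a ball centered at $x_0$. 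By replacing $u(x_0)$ with $\min_{B(y_0,\epsilon)}u$ you discard precisely this structure and are left with a statement strictly stronger than the theorem, and false. The boundary discussion inherits the same gap, since it is built on top of the interior claim; the genuine role of convexity and the Neumann alternative in \cite{ASS} is to make the cone comparison and slope estimate available up to $\partial\Omega$, which is a separate point you would still need to carry out for $\overline{u}^\epsilon$.
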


\begin{proof}[Proof of Theorem \ref{thm:uniqueness_of_shifting_constants}]
By Theorem \ref{thm:existence_of_continuous_solutions} the equation \eqref{eq:equation_continuous} has a solution when $\overline{c}_f=0$. 
Now assume that $u$ is a viscosity solution to \eqref{eq:equation_continuous}. 
By Theorem \ref{thm:from_continuous_to_discrete} we have that $-\Delta_\infty^\epsilon \overline{u}^\epsilon \leq f + \osc(f,2\epsilon)$. Now Lemma \ref{lemma:starting_from_the_subsolution} implies that $c_f(\epsilon) \geq -\osc(f,2\epsilon)$. Similarly one can obtain $c_f(\epsilon) \leq \osc(f,2\epsilon)$ and the claim follows by taking the limit as $\epsilon \downarrow 0$.
\end{proof}

\begin{remark}\label{rem:shifting_constant_as_linear_functional}
In the one dimensional case (say $\Omega=[0,r]$) the viscosity solutions to the equation \eqref{eq:equation_continuous} are standard solutions to the equation $-u''=f$ where $u'(0)=u'(r)=0$. It is clear that in this case $\overline{c}_f=\frac{1}{r}\int_0^rf(x)dx$. The fact that $\overline{c}_f$ is a linear functional of $f$ relies heavily on the fact that the infinity Laplacian in one dimension is a linear operator. For higher dimensional domains $f \mapsto \overline{c}_f$ is in general not a linear functional. Next we show that a two dimensional disc is an example of a domain on which $\overline c_f$ is a nonlinear functional of $f$ (essentially the same argument can be applied for balls in any dimension higher than one).

Take $\Omega$ to be a two dimensional disc of radius $r$ centered at the origin, and assume that on $C(\overline{\Omega})$ the mapping $f \mapsto \overline{c}_f$ is a linear functional. Since it is clearly a positive functional and  $\overline{c}_1=1$
by
 Riesz representation theorem it is of the form $\overline{c}_f = \int_{\overline{\Omega}} fd\mu$, for some probability measure $\mu$ on $\overline{\Omega}$.
Let $f$ be a radially symmetric function on $\overline{\Omega}$, that is  $f(x)=g(|x|)$, where $g \colon [0,r] \to \mathbb{R}$ is a continuous function. 
Let $u_n \colon \overline{\Omega} \to \mathbb{R}$ and $v_n \colon [0,r] \to \mathbb{R}$ be the sequences of game values with  the running payoff $f$ played on $\overline{\Omega}$ and the running payoff $g$ played on $[0,r]$ respectively, both games played with vanishing terminal payoff and  step size $\epsilon$.
Using induction and \eqref{eq:recursion} one can see that for any $n$ we have $u_n(x)=v_n(|x|)$. Thus, using the expression for $\overline{c}_f$ in the one dimensional case we have that $\mu$ is necessarily of the form $\mu(dx,dy)= \frac{dxdy}{r\pi\sqrt{x^2+y^2}}$, that is $\mu$ is a radially symmetric measure which assigns equal measure to any annulus of given width.

\begin{figure}
\includegraphics{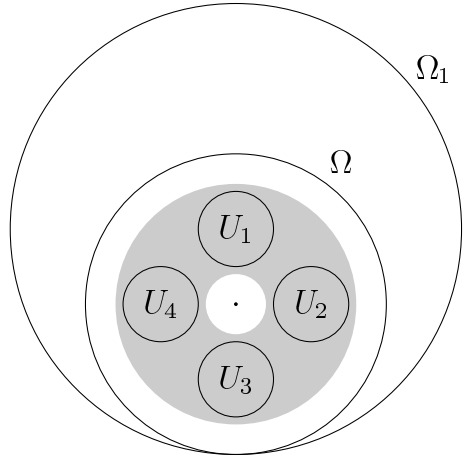}
\caption{Example of functions from Remark \ref{rem:shifting_constant_as_linear_functional}. Functions $f_1$, $f_2$, $f_3$ and $f_4$ have supports in discs $U_1$, $U_2$, $U_3$ and $U_4$ respectively. The shaded area is the support of the function $g$.}
\label{fig:no_functional}
\end{figure}

Next let $U_1$, $U_2$, $U_3$ and $U_4$ be disjoint discs with centers at $(0,r/2)$, $(r/2,0)$, $(0,-r/2)$, $(-r/2,0)$ and radii $r/4$. By $\Omega_1$ denote the smallest disc with the center $(0,r/2)$ which contains $\Omega$.
Let $f_1 \colon \overline{\Omega_1} \to [0,1]$ be a function with support in $U_1$ and which values are radially symmetric around $(0,r/2)$, decreasing with the distance from $(0,r/2)$ and such that $f_1(x)=1$ whenever the distance between $x$ and $(0,r/2)$ is no more than $r/4-\delta_1$.
Playing the game on $\overline{\Omega}_1$ with running payoff $f_1$ leads to
\begin{equation}\label{eq:shifting for the small disc}
\overline{c}_{f_1} \geq \frac{2}{3r}\int_0^{r/4-\delta_1}1dt = \frac{1}{6}-\frac{2\delta_1}{3r}.
\end{equation}
Let $w_n^1$ and $w_n$ be the sequences of game values with the running payoff  $f_1$ played on $\overline{\Omega}_1$ and the running payoff  $f_1|_{\overline{\Omega}}$ played on $\overline{\Omega}$ respectively, both games played with vanishing terminal payoff and  step size $\epsilon$.
It is clear that $w_n^1$ are radially symmetric functions on $\overline{\Omega}_1$ with values decreasing with the distance from $(0,r/2)$.  
Using this and the induction on $n$ one can see that $w_n^1(x) \leq w_n(x)$ for any $x \in \overline{\Omega}$. Therefore it holds that $\overline{c}_{f_1} \leq \overline{c}_{f_1|_{\overline{\Omega}}}$, which together with \eqref{eq:shifting for the small disc} implies
\[
\overline{c}_{f_1|_{\overline{\Omega}}} \geq \frac{1}{6}-\frac{2\delta_1}{3r}.
\]
Take $f_2$, $f_3$ and $f_4$ to be equal to the function $f_1$ rotated clockwise for $\pi/2$, $\pi$ and $3\pi/4$ respectively and $f=\sum_{i=1}^4f_i|_{\overline{\Omega}}$. By symmetry and  the assumed linearity, the function $f \colon \Omega \to \mathbb{R}$ satisfies
\begin{equation}\label{eq:shifting on the union}
\overline{c}_f \geq \frac{2}{3} - \frac{8\delta_1}{3r}.
\end{equation}
Now take $g \colon \overline{\Omega} \to [0,1]$ to be a radially symmetric function on $\overline{\Omega}$, such that $g(x) > 0$ if and only if $r/4-\delta_2 \leq |x| \leq 3r/4+\delta_2$ and such that $f \leq g$. Again by the assumption we have
\begin{equation}\label{eq:shifting on the strip}
\overline{c}_g \leq \frac{1}{r}\int_{r/4-\delta_2}^{3r/4+\delta_2} 1dt = \frac{1}{2} + \frac{2\delta_2}{r}.
\end{equation}
Since $\overline{c}_f \leq \overline{c}_g$, for small $\delta_1$ and $\delta_2$, inequalities \eqref{eq:shifting on the union} and \eqref{eq:shifting on the strip} give a contradiction with the assumed linearity of the mapping $f \mapsto \overline{c}_f$.
See Figure \ref{fig:no_functional}.
\end{remark}

\section{Uniqueness discussion}\label{section: uniqueness discussion}

As Figure \ref{bowties} illustrates, in the graph case, once we are given $f$, we do not
always have a unique solution to 
\[
u(x) - \frac{1}{2}(\min_{y \sim x}u(y) + \max_{y \sim x}u(y)) = f(x),
\] 
even in the case of a
finite graph with self loops.   When the running payoff at each vertex
is $f$, the corresponding ``optimal play'' will make $u(x_k)$ plus
the cumulative running payoff a martingale ($x_k$ is the position of the token at the $k$th step).  Under such play, the
players may spend all of their time going back and forth between the
two vertices in one of the black-white pairs in Figure \ref{bowties}.
(In the case of the second function shown, the optimal move choices
are not unique, and the players may move from one black-white pair to
another.)  The basic idea is that as the players are competing within
one black-white pair, neither player has a strong incentive to try to
move the game play to another black-white pair.

A continuum analog of this construction appeared in Section 5.3 of
[PSSW], where it was used to show non-uniqueness of solutions to
$\Delta_\infty u = g$ with zero boundary conditions.  (In this case,
$g$ was zero except for one ``positive bump'' and one symmetric
``negative bump''.  Game players would tug back and forth between the
two bumps, but neither had the ability to make the the game end
without giving up significant value.)  It is possible that this
example could be adapted to give an analogous counterexample to
uniqueness in our setting (i.e., one could have two opposite-sign
pairs of bumps separated by some distance on a larger domain with free
boundary conditions --- and players pulling back and forth within one
pair of bumps would never have sufficient incentive to move to the
other pair of bumps --- and thus distinct functions such as those
shown in Figure \ref{bowties} could be obtained).  However, the
construction seems rather technical and we will not attempt it here.

\begin {figure}[!ht]
\begin {center}
\includegraphics [width=4in]{bowties.epsi}
\caption {\label{bowties} The difference equation \eqref{eq:equation_discrete} does not
always have a unique (up to additive constant) solution.  On each of
the three copies of the same graph shown above (assume self-loops are
included, though not drawn), we
define $f$ to be the function which is $-1$ on black vertices, $1$ on
white vertices, and $0$ on gray vertices.  Then each of the three
functions illustrated by numbers above solves \eqref{eq:equation_discrete}.  In each
case, the value at a gray vertex is the average of the largest and
smallest neighboring values. The value at a black (white) vertex is
$1$ less (more) than the average of the largest and smallest
neighboring values.
}
\end {center}
\end {figure}

\section*{Acknowledgments}
The authors would like to thank Charles Smart and Scott Armstrong for helpful discussions and references. The first and fourth author would like to thank Microsoft Research where part of this work was done. The fourth author was supported by NSF grant DMS-0636586.


\end{document}